\newcolumntype{M}[1]{>{\raggedright}m{#1}}
\DeclareMathAlphabet{\mathpzc}{OT1}{pzc}{m}{it}
\newtheorem{theorem}{Theorem}[section]
\newtheorem{proposition}[theorem]{Proposition}
\newtheorem{corollary}[theorem]{Corollary}
\newtheorem{claim}[theorem]{Claim}
\theoremstyle{definition}
\newtheorem{definition}[theorem]{Definition}
\newtheorem{example}[theorem]{Example}
\theoremstyle{remark}
\newtheorem{remark}[theorem]{Remark}
\numberwithin{equation}{section}
\begin{document}

\title{Zeta-function and $\mu^*$-Zariski pairs of surfaces}

\author{Christophe Eyral and Mutsuo Oka}

\address{C. Eyral, Institute of Mathematics, Polish Academy of Sciences, ul. \'Sniadeckich 8, 00-656 Warsaw, Poland}  
\email{cheyral@impan.pl} 
\address{M. Oka, Professor Emeritus of Tokyo Institute of Technology, 3-19-8 Nakaochiai, Shinjuku-ku, Tokyo 161-0032, Japan}   
\email{okamutsuo@gmail.com}

\thanks{}

\subjclass[2020]{14M25, 14B05, 14J17, 32S55, 32S05}

\keywords{Zeta-function, monodromy, Milnor fibration, Milnor number, almost Newton non-degenerate function, toric modification, $\mu^*$-constant stratum, $\mu^*$-Zariski pair of surfaces}

\begin{abstract}
A \emph{Zariski pair of surfaces} is a pair of complex polynomial functions in $\mathbb{C}^3$ which is obtained from a \emph{classical Zariski pair} of projective curves $f_0(z_1,z_2,z_3)=0$ and $f_1(z_1,z_2,z_3)=0$ of degree $d$ in $\mathbb{P}^2$ by adding a same term of the form $z_i^{d+m}$ ($m\geq 1$) to both $f_0$ and $f_1$ so that the corresponding affine surfaces of $\mathbb{C}^3$ \textemdash\ defined by $g_0:=f_0+z_i^{d+m}$ and $g_1:=f_1+z_i^{d+m}$ \textemdash\ have an isolated singularity at the origin and the same zeta-function for the monodromy associated with their Milnor fibrations (so, in particular, $g_0$ and $g_1$ have the same Milnor number). In the present paper, we show that if $f_0$ and $f_1$ are ``convenient'' with respect to the coordinates $(z_1,z_2,z_3)$ and if the singularities of the curves $f_0=0$ and $f_1=0$ are Newton non-degenerate in some suitable local coordinates, then $(g_0,g_1)$ is a \emph{$\mu^*$-Zariski pair of surfaces}, that is, a Zariski pair of surfaces whose polynomials $g_0$ and $g_1$ have the same Teissier's $\mu^*$-sequence but lie in different path-connected components of the $\mu^*$-constant stratum. 
To this end, we prove a new general formula that gives, under appropriate conditions, the Milnor number of functions of the above type, and we show (in a general setting) that two polynomials functions lying in the same path-connected component of the $\mu^*$-constant stratum can always be joined by a ``piecewise complex-analytic path''.
\end{abstract}

\maketitle

\markboth{C. Eyral and M. Oka}{Zeta-function and $\mu^*$-Zariski pairs of surfaces}

\section{Introduction}

Consider two reduced homogeneous polynomial functions $f_0(z_1,z_2,z_3)$ and $f_1(z_1,z_2,z_3)$ of degree $d$ in $\mathbb{C}^3$ which are ``convenient'' (i.e., the Newton boundaries of $f_0$ and $f_1$ intersect each coordinate axis) and such that the corresponding curves $C_0$ and $C_1$ in the complex projective plane~$\mathbb{P}^2$ makes a ``Zariski pair''. This means that there are regular neighbourhoods $N(C_0)$ and $N(C_1)$ of $C_0$ and $C_1$, respectively, such that the pairs $(N(C_0),C_0)$ and $(N(C_1),C_1)$ are homeomorphic while the pairs $(\mathbb{P}^2,C_0)$ and $(\mathbb{P}^2,C_1)$ are not (see \cite{Z,Artal,Artaletal}). We suppose that the singularities of the curves are located in $z_1z_2z_3\not=0$.
Now, add a same term of the form $z_i^{d+m}$ (where $m$ is an integer $\geq 1$) to both $f_0$ and $f_1$ so that the corresponding affine surfaces of $\mathbb{C}^3$, defined by $g_0:=f_0+z_i^{d+m}$ and $g_1:=f_1+z_i^{d+m}$ respectively, have an isolated singularity at the origin.
As in \cite{O2,O3}, we say that the pair $(g_0,g_1)$ is a \emph{Zariski pair of surfaces} (or a \emph{Zariski pair of links}) if $g_0$ and $g_1$ have the same zeta-function for the monodromy associated with their Milnor fibrations (so, in particular, $g_0$ and $g_1$ have the same Milnor number). 
The main, but not unique, goal of the present paper is to show that if the singularities of the curves $C_0$ and $C_1$ are Newton non-degenerate in some suitable local coordinates, then $(g_0,g_1)$ is a Zariski pair of surfaces for which $g_0$ and $g_1$ have the same Teissier's $\mu^*$-sequence while lying in different path-connected components of the corresponding $\mu^*$-constant stratum (see Theorem \ref{mt2}). As in \cite{O3}, we call such a special Zariski pair of surfaces a \emph{$\mu^*$-Zariski pair of surfaces}.

The main tool we use in the proof is a formula, established by the second named author in \cite{O2}, which gives the zeta-function of the monodromy associated with the Milnor fibration of an ``almost Newton non-degenerate function''. (The class of almost Newton non-degenerate functions, less rigid than the class of Newton non-degenerate functions,  enjoys many interesting properties as shown in \cite{O2,O3}. The components $g_0$ and $g_1$ of our $\mu^*$-Zariski pair of surfaces are such functions.)  We shall apply this formula for the zeta-function in order to show a crucial step of the proof of Theorem \ref{mt2}. This step is another general formula (hereafter referred to as ``shift formula'', see Theorem~\ref{mt1}) that gives the Milnor number of a function in~$\mathbb{C}^n$ of the form $g=f+z_i^{d_i+m}$ ($m\geq 1$). Here, $f$ is a weighted homogeneous polynomial (for some weight $\mathbf{w}=(w_1,\ldots,w_n)$) of the form 
\begin{equation*}
f=a_1z_1^{d_1}+\cdots +a_nz_n^{d_n}+\sum_{\alpha=(\alpha_1,\ldots,\alpha_n)} a_\alpha z_1^{\alpha_1}\cdots z_n^{\alpha_n}
\end{equation*}
such that the singular locus of $V:=f^{-1}(0)$ is $1$-dimensional and for any proper subset $I\subsetneq \{1,\ldots,n\}$ the restriction of $f$ to $\mathbb{C}^I:=\{(z_1,\ldots,z_n)\in\mathbb{C}^n\mid z_i=0 \mbox{ and } i\notin I\}$ is Newton non-degenerate. 
We also assume that $f$ satisfies the following ``Newton pre-non-degeneracy condition''. Take a toric modification $\hat\pi\colon X\to\mathbb{C}^n$ compatible with the dual Newton diagram of $f$, and consider the divisor $\hat E(\mathbf{w})$ associated with the weight $\mathbf{w}$. We say that $f$ is Newton pre-non-degenerate if each singularity $\mathbf{p}\in E(\mathbf{w}):=\hat E(\mathbf{w})\cap \widetilde{V}$ (where $\widetilde{V}$ denotes the strict transform of $V$) is convenient and Newton non-degenerate in suitable local coordinates (i.e., for appropriate local coordinates near $\mathbf{p}$, the hypersurface $E(\mathbf{w})$ of $\hat E(\mathbf{w})$ is defined by a convenient Newton non-degenerate function). For more details, see Definition \ref{def-psND}. Under these assumptions on $f$, we show that the function $g=f+z_i^{d_i+m}$ is almost Newton non-degenerate, so that, in order to obtain its Milnor number, it is enough to compute the degree of its zeta-function \textemdash\ a zeta-function which can be effectively computed using \cite{O2}. 

Another ingredient that plays an important role in the proof of Theorem \ref{mt2} is the following property of $\mu^*$-constant strata.
Suppose $f$ and $f'$ are polynomial functions on $\mathbb{C}^n$ 
that vanish at the origin. If $f$ and $f'$ (as germs of analytic functions at the origin) lie in the same path-connected component of the $\mu^*$-constant stratum, then $f$ and $f'$ can always be joined by a ``piecewise complex-analytic path'' (see Definition \ref{def-pcap} and the comment after it for the precise meaning). Up to our knowledge, this property has never been observed so far. We shall prove it in Section \ref{sect-smcmscs} (see Theorem \ref{mt3}). Certainly, this latter result as well as a similar one concerning the $\mu$-constant stratum (which we also prove in Section \ref{sect-smcmscs}) may be useful in many situations in singularity theory.

\tableofcontents

\section{Formula for the zeta-function of an almost Newton non-degenerate function}\label{OkaFormula}

In this section we recall a formula, established by the second named author in \cite{O2}, which gives the zeta-function of the monodromy associated with the Milnor fibration at the origin of an almost Newton non-degenerate function. This formula generalizes the classical Varchenko formula, given in \cite{V}, which is about Newton non-degenerate functions. It will play a crucial role to establish the shift formula for the Milnor number mentioned in the introduction and to construct our $\mu^*$-Zariski pair.

Throughout this section, let $\mathbf{z}=(z_1,\ldots,z_n)$ be coordinates for $\mathbb{C}^n$ ($n\geq 2$), and let $f(\mathbf{z})=\sum_\alpha a_\alpha \mathbf{z}^\alpha$ be a non-constant analytic function defined in a neighbourhood $U$ of the origin $\mathbf{0}\in \mathbb{C}^n$. Here, $\alpha=(\alpha_1,\ldots,\alpha_n)\in\mathbb{N}^n$ and $\mathbf{z}^\alpha=z_1^{\alpha_1}\cdots z_n^{\alpha_n}$. We assume that $f(\mathbf{0})=0$ and we write $V:=f^{-1}(0)$ for the hypersurface in $U\subseteq \mathbb{C}^n$ defined by $f$.

\subsection{The A'Campo formula}\label{ACformula}
The starting point for the results of \cite{V,O2} mentioned above is another famous formula for the zeta-function of the monodromy due to A'Campo \cite{A}. In this subsection, we recall this formula in a slightly more general form as given by the second named author in \cite{O1}.
 
Let $F$ denote the Milnor fibre of the Milnor fibration of $f$ at $\mathbf{0}$ and let $h\colon F\to F$ be the associated monodromy map. The \emph{zeta-function $\zeta_{f,\mathbf{0}}(t)$ of the monodromy associated with the Milnor fibration of $f$ at $\mathbf{0}$} is defined by
\begin{equation*}
\zeta_{f,\mathbf{0}}(t):=\prod_{i=0}^{n-1} P_i(t)^{(-1)^{i+1}}.
\end{equation*}
Here, $P_i(t):=\det(\mbox{Id}-t\cdot h_{*i})$, where $h_{*i}\colon H_i(F;\mathbb{Q})\to H_i(F;\mathbb{Q})$ is the homomorphism induced by $h$ on the $i$th homology group of $F$ with coefficients in $\mathbb{Q}$. Note that in the special case of isolated singularities, the fibre $F$ is $(n-2)$-connected, so that
\begin{equation*}
\zeta_{f,\mathbf{0}}(t):=(1-t)^{-1} P_{n-1}(t)^{(-1)^{n}}
\end{equation*}
and the Milnor number $\mu_{\mathbf{0}}(f)$ of $f$ at $\mathbf{0}$ satisfies the relation
\begin{equation}\label{lprrmnedzf}
-1+(-1)^n\mu_{\mathbf{0}}(f)=\deg\zeta_{f,\mathbf{0}}(t).
\end{equation}
(Here, by definition, the degree of a rational function $R(t)=p(t)/q(t)$ is the number $\deg R(t):=\deg p(t)-\deg q(t)$.)

Now, assume we are given a \emph{good resolution of the function $f$}, that is, a proper holomorphic map $\hat{\pi}\colon X\to U$ from a (complex) analytic manifold $X$ of dimension $n$ to the neighbourhood $U$ satisfying the following two conditions:
\begin{enumerate}
\item
the restriction $\hat{\pi}\colon X\setminus \hat{\pi}^{-1}(V)\to U\setminus V$ is biholomorphic;
\item
if $\hat{\pi}^{-1}(V)=E_1\cup\cdots\cup E_r\cup E_{r+1}\cup\cdots\cup E_{r+m}$ and $\widetilde{V}=E_{r+1}\cup\cdots\cup E_{r+m}$ denote the irreducible decompositions of the total transform $\hat{\pi}^{-1}(V)$ and of the strict transform $\widetilde{V}$ of $V$ by $\hat{\pi}$, respectively, then the $E_i$'s ($1\leq i\leq r+m$) are non-singular and $\hat{\pi}^{-1}(V)$ has only normal crossing singularities. 
\end{enumerate}
The second condition means that for any $\mathbf{p}\in \hat{\pi}^{-1}(V)$, if $I$ is the set of indexes $i$ ($1\leq i\leq r+m$) for which $\mathbf{p}\in E_i$, then $|I|\leq n$ and there is an analytic coordinate chart $(U_{\mathbf{p}},\mathbf{x}:=(x_1,\ldots,x_n))$ of $X$ at $\mathbf{p}$ together with an injective map $\nu\colon I\to \{1,\ldots,n\}$ such that, in this chart, $E_i$ is given by $x_{\nu(i)}=0$ for all $i\in I$.

Now, for all $1\leq i\leq r$, let $m_i$ denote the multiplicity along $E_i$ of the pull-back function $\hat{\pi}^*f$ of $f$ by $\hat\pi$, and let 
\begin{equation*}
E'_i:=\hat{\pi}^{-1}(0)\cap \Bigg(E_i\Bigg\backslash \widetilde{V}\cup \bigcup_{\substack{1\leq j\leq r\\ j\not=i}} E_j\Bigg).
\end{equation*}
Then the A'Campo--Oka formula for the zeta-function $\zeta_{f,\mathbf{0}}(t)$ says that
\begin{equation}\label{ACampoformula}
\zeta_{f,\mathbf{0}}(t)=\prod_{i=1}^r (1-t^{m_i})^{-\chi(E'_i)}
\end{equation}
where $\chi(E'_i)$ is the Euler--Poincar\'e characteristic of $E'_i$ (see \cite[Th\'eor\`eme 3]{A} and \cite[Chapter~I, Theorem (5.2)]{O1}). 
Let us highlight that this formula holds for possibly non-isolated singularities. 

\begin{remark}[see\mbox{\cite[Proposition 11]{O3}}]\label{remark-multetmi}
If $\mbox{mult}_{\mathbf{0}}(f)$ is the multiplicity of $f$ at $\mathbf{0}$, then for each $1\leq i\leq r$ we have $m_i\geq \mbox{mult}_{\mathbf{0}}(f)$.
\end{remark}

To state the formulas by Varchenko and Oka about Newton non-degenerate and almost Newton non-degenerate functions, we first need to recall the notions of dual Newton diagram and toric modification. This is done in \S\S \ref{dnd} and \ref{TM} below. The formulas by Varchenko and Oka are given in \S \ref{subsect-VF} and in \S \ref{subsect-OF} respectively.

\subsection{Dual Newton diagram}\label{dnd}
Here, we recall the notion of dual Newton diagram. For details, we refer the reader to \cite[Chapter II, \S 1 and Chapter III, \S 3]{O1}.

Let $M$ be the lattice of Laurent monomials in the variables $z_1,\ldots,z_n$, and let $W$ be the (dual) lattice of (integral) weights on these variables, that is, the lattice of weight functions $\mathbf{w}\colon \{z_1,\ldots,z_n\}\to\mathbb{Z}$. Let 
\begin{equation*}
M_{\mathbb{R}}:=M\otimes_{\mathbb{Z}}\mathbb{R}
\quad\mbox{and}\quad
W_{\mathbb{R}}:=W\otimes_{\mathbb{Z}}\mathbb{R}
\end{equation*}
be the corresponding real vector spaces of dimension $n$. Hereafter, we identify these spaces with $\mathbb{R}^n$, and to avoid any confusion, we denote the vectors in $M_{\mathbb{R}}$ (respectively, in $W_{\mathbb{R}}$) by row vectors (respectively, by column vectors). So, in particular, a monomial $\mathbf{z}^\alpha=z_1^{\alpha_1}\cdots z_n^{\alpha_n}\in M$ is identified with the integral row vector $\alpha=(\alpha_1,\ldots,\alpha_n)$ while a weight $\mathbf{w}\in W$ is identified with the integral column vector ${}^t(w_1,\ldots,w_n):={}^t(\mathbf{w}(z_1),\ldots,\mathbf{w}(z_n))$. Define $M^+$ (respectively, $W^+$) as the set of all ``non-negative'' row vectors $(\alpha_1,\ldots,\alpha_n)\in M$ (respectively, all ``non-negative'' column vectors ${}^t(w_1,\ldots,w_n)\in W$) \textemdash\ that is, $\alpha_i\geq 0$ and $w_i\geq 0$ for all $1\leq i\leq n$. Define $M_{\mathbb{R}}^+$ and $W_{\mathbb{R}}^+$ similarly. Again, hereafter we identify $M_{\mathbb{R}}^+$ and $W_{\mathbb{R}}^+$ with 
\begin{equation*}
\mathbb{R}^n_{\geq 0}:=\{\mathbf{x}=(x_1,\ldots,x_n)\in\mathbb{R}^n\mid x_i\geq 0 \mbox{ for all } 1\leq i\leq n\}. 
\end{equation*}
The elements of $W_{\mathbb{R}}^+$ are called \emph{weight vectors}, and an integral weight vector ${}^t(w_1,\ldots,w_n)\in W^+\setminus\{\mathbf{0}\}$ is called \emph{primitive} if $\gcd(w_{1},\ldots,w_{n})=1$. 

Clearly, the Newton polyhedron $\Gamma_{\! +}(f)$ and the Newton boundary $\Gamma(f)$ of $f$ at $\mathbf{0}$ with respect to the coordinates $\mathbf{z}=(z_1,\ldots,z_n)$ can be viewed as subspaces of $M_{\mathbb{R}}^+$. We recall that $\Gamma_{\! +}(f)$ (or $\Gamma_{\! +}(f;\mathbf{z})$ when we need to emphasize the coordinates) is defined as the convex hull in $\mathbb{R}^n_{\geq 0}$ of the set
\begin{equation*}
\bigcup_{\alpha,\, a_{\alpha}\not=0}(\alpha+\mathbb{R}^n_{\geq 0})
\end{equation*}
while $\Gamma(f)$ is the union of the compact faces of $\Gamma_{\! +}(f)$.

Let us also recall that a \emph{convex polyhedral cone} $\sigma\subseteq W_{\mathbb{R}}^+$ is a set of the form
\begin{equation}\label{def-cpc}
\sigma=C(\mathbf{w}_1,\ldots,\mathbf{w}_k):=\bigg\{\sum_{i=1}^k \lambda_i \mathbf{w}_i\in W_{\mathbb{R}}^+\mid \lambda_i\in\mathbb{R}_{\geq 0}\mbox{ for all } 1\leq i\leq k\bigg\}.
\end{equation}
The vectors $\mathbf{w}_i\in W_{\mathbb{R}}^+$ that appear in \eqref{def-cpc} are called  \emph{generators} of $\sigma$. If they can be taken in $W^+$, then $\sigma$ is said to be \emph{rational}. 
Any rational convex polyhedral cone $\sigma\subseteq W_{\mathbb{R}}^+$ can be uniquely written as $\sigma=C(\mathbf{w}_1,\ldots,\mathbf{w}_k)$, where the $\mathbf{w}_i$'s are primitive and $k$ is minimal among all possible such expressions (i.e., $\mathbf{w}_i\notin C(\mathbf{w}_1,\ldots,\mathbf{w}_{i-1},\mathbf{w}_{i+1},\ldots,\mathbf{w}_k)$ for each $1\leq i\leq k$). Hereafter we always assume that  cones are generated by the minimal generators. The dimension of a convex polyhedral cone $\sigma\subseteq W_{\mathbb{R}}^+$ is its Euclidean dimension. A rational convex polyhedral cone $\sigma=C(\mathbf{w}_1,\ldots,\mathbf{w}_k)$ is said to be \emph{simplicial} if $\mathbf{w}_1,\ldots,\mathbf{w}_k$ are linearly independent over $\mathbb{R}$; it is said to be \emph{regular} if $\mathbf{w}_1,\ldots,\mathbf{w}_k$ are primitive and can be completed in a basis of the lattice $\mathbb{Z}^n$. 

A family $\Sigma^*$ of rational convex polyhedral cones of $W_{\mathbb{R}}^+$ is called a \emph{rational convex polyhedral cone subdivision} of $W_{\mathbb{R}}^+$ if $\Sigma^*$ is a finite complex\footnote{This means that any face of a cone of $\Sigma^*$ is also a cone of $\Sigma^*$ and the intersection of any cones $\sigma$ and $\sigma'$ of $\Sigma^*$ is a face of both $\sigma$ and $\sigma'$. We recall that if $\sigma=C(\mathbf{w}_1,\ldots,\mathbf{w}_k)$ and $I=\{i_1,\ldots,i_m\}\subseteq\{1,\ldots,n\}$, then the cone $\sigma_I=C(\mathbf{w}_{i_1},\ldots,\mathbf{w}_{i_m})$ is a face of $\sigma$ if there exists a hyperplane $H$ of $W_{\mathbb{R}}$ through $\mathbf{w}_{i_1},\ldots,\mathbf{w}_{i_m}$ such that the other vertices of $\sigma$ are located in the same connected component of $W_{\mathbb{R}}^+\setminus H$.} such that 
\begin{equation*}
W_{\mathbb{R}}^+=\bigcup_{\sigma\in\Sigma^*}\sigma.
\end{equation*} 
Note that since we are dealing with cones having the origin as vertex, we can identify any rational convex polyhedral cone subdivision with its projection on the ``hyperplane'' of $W_{\mathbb{R}}^+\equiv\mathbb{R}^n_{\geq 0}$ defined by the equation $x_1+\cdots+x_n=1$. 
A rational convex polyhedral cone subdivision $\Sigma^*$ of $W_{\mathbb{R}}^+$ is called a \emph{simplicial cone subdivision} of $W_{\mathbb{R}}^+$ if every cone $\sigma\in\Sigma^*$ is simplicial. A simplicial cone subdivision $\Sigma^*$ of $W_{\mathbb{R}}^+$ is called a \emph{regular simplicial cone subdivision} of $W_{\mathbb{R}}^+$ if every simplicial cone $\sigma\in\Sigma^*$ is regular. Finally, a \emph{vertex} of a regular simplicial cone subdivision $\Sigma^*$ is a primitive weight vector which generates a $1$-dimensional cone of $\Sigma^*$.

Now, for any weight vector $\mathbf{w}={}^t(w_1,\ldots,w_n)\in W_{\mathbb{R}}^+$, write $d(\mathbf{w};f)$ for the minimal value of the restriction to $\Gamma_{\! +}(f)$ of the canonical linear map $\ell_{\mathbf{w}}$ defined by 
\begin{equation}\label{linfct}
\ell_{\mathbf{w}}(\alpha):=\sum_{i=1}^n \alpha_i w_i,
\end{equation}
 and put 
\begin{equation*}
\Delta(\mathbf{w};f):=\{\alpha\in\Gamma_{\! +}(f)\mid \ell_{\mathbf{w}}(\alpha)=d(\mathbf{w};f)\}.
\end{equation*} 
Clearly, $\Delta(\mathbf{w};f)$ is a face of $\Gamma_{\! +}(f)$. It is a compact face (i.e., it is contained in the Newton boundary $\Gamma(f)$) if and only if $\mathbf{w}$ is a ``positive'' weight vector (i.e., if $w_i>0$ for each $i$). 

In order to define the dual Newton diagram, we consider on $W_{\mathbb{R}}^+$ the equivalence relation $\sim$ defined for any $\mathbf{w}, \mathbf{w}'\in W_{\mathbb{R}}^+$ as follows:
\begin{equation*}
\mathbf{w}\sim \mathbf{w}' \Leftrightarrow \Delta(\mathbf{w};f)=\Delta(\mathbf{w}';f).
\end{equation*}
For any face $\Delta\subseteq\Gamma_{\! +}(f)$, there is an equivalence class $\Delta^*$ which is defined by 
\begin{equation*}
\Delta^*:=\{\mathbf{w}\in W^+_{\mathbb{R}}\mid \Delta(\mathbf{w};f)=\Delta\}.
\end{equation*} 
For each $(n-1)$-dimensional face $\Delta_0\subseteq\Gamma_{\! +}(f)$, there is a unique primitive weight vector $\mathbf{w}(\Delta_0)\in W^+\setminus\{\mathbf{0}\}$ such that $\Delta_0=\Delta(\mathbf{w}(\Delta_0);f)$, and for any face $\Delta\subseteq\Gamma_{\! +}(f)$, the corresponding equivalence class $\Delta^*$ is of the form
\begin{equation*}
\Delta^*=\bigg\{\sum_{i=1}^k \lambda_i \mathbf{w}(\Delta_i)\in W_{\mathbb{R}}^+\mid \lambda_i\in\mathbb{R}_{> 0}\mbox{ for all } 1\leq i\leq k\bigg\},
\end{equation*}
where the $\Delta_i$'s are the $(n-1)$-dimensional faces of $\Gamma_{\! +}(f)$ containing $\Delta$. The family 
\begin{equation*}
\{\Delta^*\}_{\Delta\subsetneq \Gamma_{\! +}(f)}
\end{equation*}
(where $\Delta$ runs over all proper faces of $\Gamma_{\! +}(f)$) is a partition of $W_{\mathbb{R}}^+\setminus \{\mathbf{0}\}\equiv\mathbb{R}^n_{\geq 0}\setminus\{\mathbf{0}\}$.

\begin{definition}
The \emph{dual Newton diagram} $\Gamma^*(f)$ (or $\Gamma^*(f;\mathbf{z})$) of $f$ at $\mathbf{0}$ with respect to the coordinates $\mathbf{z}=(z_1,\ldots,z_n)$ is the rational convex polyhedral cone subdivision of $W_{\mathbb{R}}^+$ given by the closures
\begin{equation*}
\bar\Delta^*=\bigg\{\sum_{i=1}^k \lambda_i \mathbf{w}(\Delta_i)\in W_{\mathbb{R}}^+\mid \lambda_i\in\mathbb{R}_{\geq 0}\mbox{ for all } 1\leq i\leq k\bigg\}
\end{equation*}
of the equivalence classes $\Delta^*$ associated with the relation $\sim$.
\end{definition}

The next definition concerns a class of regular simplicial cone subdivisions which will play a crucial role in what follows.

\begin{definition}\label{def-admisub}
A regular simplicial cone subdivision $\Sigma^*$ of $W_{\mathbb{R}}^+$ is said to be \emph{admissible} with respect to the dual Newton diagram $\Gamma^*(f)$ if $\Sigma^*$ is a regular simplicial cone subdivision of $\Gamma^*(f)$ (i.e., any cone $\sigma\in\Sigma^*$ is contained in a cone $\sigma'\in\Gamma^*(f)$).
\end{definition}

\subsection{Toric modification}\label{TM}
In this subsection, we briefly recall a standard construction, called ``toric modification'',  which is used in Varchenko's and Oka's formulas and which we will use hereafter too. Again for details, we refer the reader to \cite[Chapter II, \S 1]{O1}.

Let $\Sigma^*$ be a regular simplicial cone subdivision of $W_{\mathbb{R}}^+$. Associated with such a subdivision, we construct a (complex) manifold $X\equiv X(\Sigma^*)$ together with a map $\hat{\pi}\colon X\to\mathbb{C}^n$ as follows.
Let us denote by $\Sigma^*(n)$ the set of $n$-dimensional cones in $\Sigma^*$, and for each $\sigma=C(\mathbf{w}_1,\ldots,\mathbf{w}_n)\in\Sigma^*(n)$, let $\mathbb{C}^n_\sigma$ be the affine space of dimension $n$ with coordinates $\mathbf{y}_\sigma=(y_{\sigma,1},\dots,y_{\sigma,n})$ and let $\hat{\pi}_\sigma\colon \mathbb{C}^n_\sigma\to\mathbb{C}^n$ be the birational map defined by 
\begin{equation}\label{def-bmpsh}
\hat{\pi}_\sigma(\mathbf{y}_\sigma):=\bigg(\prod_{j=1}^n y_{\sigma,j}^{w_{1,j}},\dots, \prod_{j=1}^n y_{\sigma,j}^{w_{n,j}}\bigg),
\end{equation}
where ${}^t(w_{1,i},\ldots,w_{n,i}):=\mathbf{w}_i$ for $1\leq i\leq n$.
Now, on the disjoint union $\bigsqcup_{\sigma\in\Sigma^*(n)}\mathbb{C}^n_\sigma$, let us consider the equivalence relation $\approx$ defined for any $\mathbf{y}_{\sigma}\in\mathbb{C}^n_{\sigma}$ and $\mathbf{y}_{\sigma'}\in\mathbb{C}^n_{\sigma'}$ as follows: 
\begin{equation*}
\mathbf{y}_\sigma\approx\mathbf{y}_{\sigma'}
\Leftrightarrow\left\{
\mbox{\begin{minipage}{10cm}
the birational map $\hat{\pi}_{\sigma'}^{-1}\circ\hat{\pi}_\sigma\colon \mathbb{C}^n_\sigma\to\mathbb{C}^n_{\sigma'}$ is defined at $\mathbf{y}_{\sigma}$ and $\hat{\pi}_{\sigma'}^{-1}\circ\hat{\pi}_\sigma(\mathbf{y}_{\sigma})=\mathbf{y}_{\sigma'}$.
\end{minipage}}
\right.
\end{equation*}
The quotient space 
\begin{equation*}
X:=\bigsqcup_{\sigma\in\Sigma^*(n)}\mathbb{C}^n_\sigma \ \bigg / \approx
\end{equation*}
is a non-singular algebraic variety with coordinate charts $(\mathbb{C}^n_{\sigma},\mathbf{y}_\sigma)$, where $\sigma$ runs over all cones of $\Sigma^*(n)$ \textemdash\ usually these chart are called \emph{toric coordinate charts} \textemdash\ and the canonical map 
\begin{equation*}
\hat{\pi}\colon X\to\mathbb{C}^n, 
\end{equation*}
defined in each chart $(\mathbb{C}^n_{\sigma},\mathbf{y}_\sigma)$ by $\hat{\pi}([\mathbf{y}_{\sigma}]):=\hat{\pi}_\sigma(\mathbf{y}_{\sigma})$, is a proper birational morphism. (Here, $[\mathbf{y}_{\sigma}]$ denotes the class of $\mathbf{y}_{\sigma}$ with respect to the equivalence relation $\approx$.)
The variety $X$ constructed in this way is called the \emph{toric variety} associated with $\Sigma^*$ and the map $\hat{\pi}\colon X\to\mathbb{C}^n$ is called the \emph{toric modification} (or \emph{toric blowing-up}) associated with $\Sigma^*$.

\subsection{The Varchenko formula}\label{subsect-VF}
Throughout this subsection, we assume that $f$ is \emph{Newton non-degenerate} (i.e., for any face $\Delta\subseteq\Gamma(f)$, the face function $f_{\Delta}(\mathbf{z}):=\sum_{\alpha\in\Delta}a_\alpha\mathbf{z}^\alpha$ has no critical point in $\mathbb{C}^{*n}:=\{\mathbf{z}\in\mathbb{C}^n\mid z_1\cdots z_n\not=0\}$). On the other hand, we do not assume that $f$ is ``convenient'' (i.e., we do not assume that $\Gamma(f)$ intersects each coordinate axis), so that it may have a non-isolated singularity at the origin.  

Let $\Sigma^*$ be a regular simplicial cone subdivision of $W_{\mathbb{R}}^+$, and let $\hat{\pi}\colon X\to\mathbb{C}^n$ be the associated toric modification. 
Under the relation $\approx$, for any cones $\sigma=C(\mathbf{w}_1,\ldots,\mathbf{w}_n)$ and $\sigma'=C(\mathbf{w}'_1,\ldots,\mathbf{w}'_n)$ of $\Sigma^*(n)$ having a common vertex $\mathbf{w}$ of $\Sigma^*$, say, for instance, $\mathbf{w}:=\mathbf{w}_1=\mathbf{w}'_1$ (changing the orderings of $\mathbf{w}_1,\ldots,\mathbf{w}_n$ and of $\mathbf{w}'_1,\ldots,\mathbf{w}'_n$ if necessary), the divisors
\begin{equation*}
\hat E(\mathbf{w};\sigma):=\{\mathbf{y}_\sigma\in \mathbb{C}^{n}_\sigma \mid y_{\sigma,1}=0\} \quad\mbox{and}\quad \hat E(\mathbf{w};\sigma'):=\{\mathbf{y}_{\sigma'}\in \mathbb{C}^{n}_{\sigma'} \mid y_{\sigma',1}=0\}
\end{equation*}
glue together on 
\begin{equation*}
\{\mathbf{y}_\sigma\in \mathbb{C}^{n}_\sigma \mid y_{\sigma,1}=0,\, y_{\sigma,j}\not=0\mbox{ for } j\geq 2\}
\end{equation*}
 and on 
\begin{equation*}
\{\mathbf{y}_{\sigma'}\in \mathbb{C}^{n}_{\sigma'} \mid y_{\sigma',1}=0,\, y_{\sigma',j}\not=0\mbox{ for }  j\geq 2\},\footnote{More precisely, if $\sigma\cap\sigma'=C(\mathbf{w}_1,\ldots,\mathbf{w}_\ell)$ with $\mathbf{w}_i=\mathbf{w}'_i$ for all $1\leq i\leq \ell$ (again changing the orderings of $\mathbf{w}_1,\ldots,\mathbf{w}_n$ and of $\mathbf{w}'_1,\ldots,\mathbf{w}'_n$ if necessary), then the divisors $\hat E(\mathbf{w};\sigma)$ and $\hat E(\mathbf{w};\sigma')$ glue together on $\{\mathbf{y}_\sigma\in \mathbb{C}^{n}_\sigma \mid y_{\sigma,1}=0,\, y_{\sigma,j}\not=0\mbox{ for } j\geq \ell+1\}$ and on $\{\mathbf{y}_{\sigma'}\in \mathbb{C}^{n}_{\sigma'} \mid y_{\sigma',1}=0,\, y_{\sigma',j}\not=0\mbox{ for }  j\geq \ell+1\}$.}
\end{equation*}
so that for any vertex $\mathbf{w}$ of $\Sigma^*$ the canonical image in $X$ of the disjoint union of the $\hat E(\mathbf{w};\sigma)$'s for $\sigma\ni\mathbf{w}$ defines an irreducible divisor $\hat E(\mathbf{w})$ in $X$. 

Now, suppose that the subdivision $\Sigma^*$ is \emph{admissible} with respect to the dual Newton diagram $\Gamma^*(f)$ (see Definition \ref{def-admisub}). Then $\hat{\pi}\colon X\to\mathbb{C}^n$ is a good resolution of $f$. Let $\mbox{Vert}(\Sigma^*)$ denote the set of vertices of $\Sigma^*$. By \cite[Chapter III, Proposition (3.3)]{O1}, we may assume that $\Sigma^*$ is \emph{small}. In the special case where $f$ is monomial-factor free (i.e.,  the case where the factorization of $f$ into irreducible factors does not have any monomial factor), this means that whenever $f\vert_{\mathbb{C}^I}\not\equiv 0$, the cone $C(\mathbf{e}_{i_1},\ldots,\mathbf{e}_{i_q})$ is in $\Sigma^*$, where $\mathbf{e}_{i_1},\ldots,\mathbf{e}_{i_q}$ are all the elements in $\{\mathbf{e}_{1},\ldots,\mathbf{e}_{n}\}$ whose index $i_j$ is not in $I$. (Here, $\mathbf{e}_i:={}^t(0,\ldots,0,1,0,\ldots,0)$ with $1$ at the $i$th place.) Equivalently, for any vertex $\mathbf{w}\in\mbox{Vert}(\Sigma^*)$ different from $\mathbf{e}_1,\ldots,\mathbf{e}_n$, we have $d(\mathbf{w};f)>0$.  If $f$ is not monomial-factor free, then $f$ is written as $f=M\cdot f'$ where $M$ is a monomial and $f'$ is monomial-factor free, and in this case we say that $\Sigma^*$ is small for $f$ if it is small for the monomial-factor free function $f'$. This definition makes sense as $\Gamma^*(f)=\Gamma^*(f')$. Note that if $f$ is not monomial-factor free and if $\Sigma^*$ is small for $f$, then we still have $d(\mathbf{w};f)>0$ for any vertex $\mathbf{w}\in\mbox{Vert}(\Sigma^*)$ different from $\mathbf{e}_1,\ldots,\mathbf{e}_n$.

Consider the following set of vertices
\begin{equation*}
\mathcal{V}^{+}(f):=\{\mathbf{w}\in\mbox{Vert}(\Sigma^*)\mid d(\mathbf{w};f)>0\}.
\end{equation*}
 Then, by \cite[Chapter III, Theorem (3.4)]{O1}, we have 
\begin{equation*}
\hat{\pi}^{-1}(V)=\widetilde{V}\cup\bigcup_{\mathbf{w}\in\mathcal{V}^{+}(f)}\hat E (\mathbf{w})
\end{equation*}
and the multiplicity of $\hat{\pi}^*f$ along $\hat E (\mathbf{w})$ is $d(\mathbf{w};f)$. For each $\mathbf{w}\in\mathcal{V}^{+}(f)$, put
\begin{equation*}
\hat E'(\mathbf{w}):=\hat{\pi}^{-1}(\mathbf{0})\cap \Bigg(\hat E (\mathbf{w})\Bigg\backslash \widetilde{V}\cup \bigcup_{\substack{\mathbf{w}'\in\mathcal{V}^{+}(f)\\ \mathbf{w}'\not=\mathbf{w}}} \hat E (\mathbf{w}')\Bigg).
\end{equation*}
Then, by the A'Campo--Oka formula \eqref{ACampoformula}, the zeta-function $\zeta_{f,\mathbf{0}}(t)$ of the monodromy of the Milnor fibration of $f$ at~$\mathbf{0}$ is then given by
\begin{equation}\label{ACinV}
\zeta_{f,\mathbf{0}}(t)=\prod_{\mathbf{w}\in\mathcal{V}_{+}(f)} (1-t^{d(\mathbf{w};f)})^{-\chi(\hat E'(\mathbf{w}))}.
\end{equation}
Here, the main difficulty is to compute $\chi(\hat E'(\mathbf{w}))$. Under the Newton non-degeneracy assumption, in \cite{V}, Varchenko showed that \eqref{ACinV} can be rewritten as
\begin{equation}\label{varchenkoformula}
\zeta_{f,\mathbf{0}}(t)=\prod_{I\in \mathcal{I}} \zeta_I(t)
\quad\mbox{with}\quad \zeta_I(t):=\prod_{\mathbf{w}\in P^I}(1-t^{d(\mathbf{w};f^I)})^{-\chi(\mathbf{w})},
\end{equation}
where $\mathcal{I}$ is the set of all non-empty subsets $I\subseteq\{1,\ldots,n\}$ such that $f^I:=f\vert_{\mathbb{C}^I}\not\equiv 0$ and $P^I$ is the set of primitive positive weight vectors in $W^{+I}$ which correspond to the maximal dimensional faces of $\Gamma(f^I)$, that is, the set of vectors $\mathbf{w}={^t(w_1,\ldots,w_n)}\in W$ such that
\begin{equation*}
w_i>0\mbox{ for }i\in I,\ w_i=0\mbox{ for }i\notin I 
\mbox{ and }\dim \Delta(\mathbf{w};f^I)=|I|-1.
\end{equation*}
(Note that $P^I$ corresponds bijectively to the $(|I|-1)$-dimensional faces of $\Gamma(f^I)$.)
The number $\chi(\mathbf{w})$ in \eqref{varchenkoformula} is defined by
\begin{equation*}
\chi(\mathbf{w}):=(-1)^{|I|-1}\, |I|!\, \mbox{Vol}_{|I|}\big(\mbox{Cone}(\Delta(\mathbf{w};f^I),\mathbf{0}^I)\big) / d(\mathbf{w};f^I),
\end{equation*}
where $\mbox{Cone}(\Delta(\mathbf{w};f^I),\mathbf{0}^I):=\{\lambda\mathbf{x}\mid \mathbf{x}\in \Delta(\mathbf{w};f^I),\, 0\leq\lambda\leq 1\}$ is the closed cone over $\Delta(\mathbf{w};f^I)$ with the origin $\mathbf{0}^I$ of $\mathbb{C}^I$ as vertex and where $\mbox{Vol}_{|I|}$ is the $|I|$-dimensional Euclidean volume. (Here, $|I|$ denotes the cardinality of $I$.)

Again, like for \eqref{ACampoformula}, let us highlight that the formula \eqref{varchenkoformula} do hold true for possibly non-isolated singularities. In the special case where $f$ has an isolated singularity at $\mathbf{0}$, the Milnor number $\mu_{\mathbf{0}}(f)$ of $f$ at $\mathbf{0}$ satisfies the following relation: 
\begin{equation}\label{var-relmilnornumber}
-1+(-1)^n\mu_{\mathbf{0}}(f)=\deg \zeta_{f,\mathbf{0}}(t)=\sum_{I\in\mathcal{I}} \deg\zeta_I(t)=\sum_{I\in\mathcal{I}}\sum_{\mathbf{w}\in P^I} -d(\mathbf{w};f^I)\, \chi(\mathbf{w}).
\end{equation}

In \cite{O2}, the second named author extended the Varchenko formula \eqref{varchenkoformula} to a larger class of functions called ``almost Newton non-degenerate functions.'' This class includes all Newton non-degenerate functions. The following two subsections are devoted to this generalization which will be useful for our purpose later.

\subsection{Almost Newton non-degenerate functions}\label{subsect-anndf}
This class of functions, introduced by the second named author in \cite{O2}, is defined as follows. From now on, let us suppose that $f$ is \emph{convenient}. Again, pick a regular simplicial cone subdivision $\Sigma^*$ of $W^+_{\mathbb{R}}$ which is admissible with respect to the dual Newton diagram $\Gamma^*(f)$, and consider the toric modification $\hat{\pi}\colon X\to \mathbb{C}^n$ associated with $\Sigma^*$. As in \S\ref{subsect-VF}, by \cite[Chapter III, Proposition~(3.3)]{O1}, we may assume that $\Sigma^*$ is small.
Let $\mathcal{M}$ denote the set of maximal dimensional faces of $\Gamma(f)$ (i.e., the faces of dimension $n-1$), and let $\mathcal{M}_0$ be the subset of $\mathcal{M}$ consisting of the faces $\Delta$ for which the face function $f_\Delta(\mathbf{z}):=\sum_{\alpha\in\Delta}a_\alpha\mathbf{z}^\alpha$ is \emph{Newton degenerate} (i.e., $f_{\Delta}$ has critical points in $\mathbb{C}^{*n}$).

\begin{definition}\label{defweaklyowanndf}
We say that $f$ is \emph{weakly almost Newton non-degenerate} if for any face $\Delta\subseteq\Gamma(f)$ the following two conditions hold true:
\begin{enumerate}
\item
if $\Delta\in\mathcal{M}\setminus\mathcal {M}_0$ or if $\dim\Delta\leq n-2$, then $f$ is Newton non-degenerate on $\Delta$ (i.e., the face function $f_\Delta$ has no critical point in $\mathbb{C}^{*n}$); 
\item
if $\Delta\in \mathcal{M}_0$, then the restriction $f_{\Delta}\colon\mathbb{C}^{*n}\to \mathbb{C}$ has a finite number of $1$-dimensional critical loci \textemdash\ which are $\mathbb{C}^*$-orbits of (some) elements in $\mathbb{C}^{*n}$ with respect to the associated $\mathbb{C}^*$-action defined by 
\begin{equation*}
(t,\mathbf{z})\in\mathbb{C}^*\times\mathbb{C}^{*n}\mapsto (t^{w_1}z_1,\ldots,t^{w_n}z_n)\in\mathbb{C}^{*n},
\end{equation*} 
where $(w_1,\dots,w_n)=\mathbf{w}$ is a weight vector such that $\Delta(\mathbf{w};f)=\Delta$.
\end{enumerate}
\end{definition}

\emph{Hereafter, we suppose that $f$ is weakly almost Newton non-degenerate.}
Take a face $\Delta\in\mathcal{M}_0$, and consider primitive weight vectors $\mathbf{w}_1,\dots,\mathbf{w}_n$ such that $\Delta(\mathbf{w}_1;f)=\Delta$ and $\sigma:=C(\mathbf{w}_1,\ldots,\mathbf{w}_n)\in\Sigma^*(n)$ (i.e., $\sigma$ is a maximal dimensional (regular simplicial) cone of $\Sigma^*$). (We recall that any cone of $\Sigma^*$ can be uniquely written in this form and that $\mathbf{w}_1$ is uniquely determined by the face $\Delta$.) Let $(\mathbb{C}^n_\sigma,\mathbf{y}_\sigma)$ be the toric coordinate chart corresponding to $\sigma$, and let $\tilde f_\sigma(\mathbf{y}_\sigma)$ be the function defined by the equality
\begin{align}\label{defeqforexcdiv1}
\hat{\pi}^*f(\mathbf{y}_\sigma)\equiv f(\hat{\pi}_\sigma(\mathbf{y}_\sigma)) =
\tilde f_\sigma(\mathbf{y}_\sigma)\prod_{i=1}^n y_{\sigma,i}^{d(\mathbf{w}_i;f)},
\end{align}
that is,
\begin{align}\label{defeqforexcdiv2}
\tilde f_\sigma(\mathbf{y}_\sigma)=\sum_{\alpha}a_\alpha\, y_{\sigma,1}^{\ell_{\mathbf{w}_1}(\alpha)-d(\mathbf{w}_1;f)}\cdots\, y_{\sigma,n}^{\ell_{\mathbf{w}_n}(\alpha)-d(\mathbf{w}_n;f)}
\end{align}
where $\mathbf{w}_i={}^t(w_{1,i},\ldots,w_{n,i})$ for $1\leq i\leq n$. (Here, as in \eqref{linfct}, for each $1\leq i\leq n$ we write $\ell_{\mathbf{w}_i}(\alpha):=\sum_{j=1}^n w_{j,i}\alpha_j$.)
In the chart $(\mathbb{C}^n_\sigma,\mathbf{y}_\sigma)$, the strict transform $\widetilde{V}$ of $V$ by $\hat{\pi}$ is given by the equation $\tilde f_\sigma(\mathbf{y}_\sigma)=0$ while the exceptional divisors 
\begin{equation*}
\hat{E}(\mathbf{w}_1)
\quad\mbox{and}\quad
E(\mathbf{w}_1):=\hat{E}(\mathbf{w}_1)\cap \widetilde{V}
\end{equation*}
 of $\hat{\pi}\colon X\to \mathbb{C}^n$ and of its restriction $\pi\colon\widetilde{V}\to V$, respectively, are given by the equations
\begin{equation*}
y_{\sigma,1}=0
\quad\mbox{and}\quad
y_{\sigma,1}=\tilde f_\sigma(0,y_{\sigma,2},\ldots,y_{\sigma,n})=0
\end{equation*}
respectively. Let us also define $\tilde f_{\mathbf{w}_1,\sigma}(\mathbf{y}_\sigma)$ by the following equation:
\begin{align*}
f_{\mathbf{w}_1}(\hat{\pi}_\sigma(\mathbf{y}_\sigma)) =
\tilde f_{\mathbf{w}_1,\sigma}(\mathbf{y}_\sigma)
\prod_{i=1}^n y_{\sigma,i}^{d(\mathbf{w}_i;f)},
\end{align*}
where $f_{\mathbf{w}_1}(\mathbf{z}):=\sum_{\alpha\in\Delta(\mathbf{w}_1;f)} a_\alpha\mathbf{z}^\alpha$ is the face function of $f$ with respect to the weight vector $\mathbf{w}_1$. Then, since $f_{\mathbf{w}_1}$ is weighted homogeneous with respect to the weight $\mathbf{w}_1$, we have $\ell_{\mathbf{w}_1}(\alpha)-d(\mathbf{w}_1;f)=0$, and therefore,
\begin{align*}
\tilde f_{\mathbf{w}_1,\sigma}(\mathbf{y}_\sigma)
& =\sum_{\alpha\in\Delta(\mathbf{w}_1;f)}a_\alpha\, y_{\sigma,2}^{\ell_{\mathbf{w}_2}(\alpha)-d(\mathbf{w}_2;f)}\cdots\, y_{\sigma,n}^{\ell_{\mathbf{w}_n}(\alpha)-d(\mathbf{w}_n;f)}\\
& =\tilde f_\sigma(0,y_{\sigma,2},\ldots,y_{\sigma,n}).
\end{align*}
In particular, $\tilde f_{\mathbf{w}_1,\sigma}(\mathbf{y}_\sigma)$ does not contain the variable $y_{\sigma,1}$, and in the chart $(\mathbb{C}^n_\sigma,\mathbf{y}_\sigma)$, the exceptional divisor $E(\mathbf{w}_1)$ is defined by the equations $y_{\sigma,1}=\tilde f_{\mathbf{w}_1,\sigma}(0,y_{\sigma,2},\ldots,y_{\sigma,n})=0$.
Thus, since $f$ is weakly almost Newton non-degenerate, it follows that the set $\mbox{Sing}(\Delta)$ of singular points of the hypersurface $E(\mathbf{w}_1)$ of $\hat E(\mathbf{w}_1)$ consists only in a finite number of points. (Indeed, $V(\tilde f_{\mathbf{w}_1,\sigma}\vert_{\{y_{\sigma,1}=0\}})\subseteq \{0\}\times\mathbb{C}^{n-1}$, and by identifying $\{0\}\times\mathbb{C}^{n-1}$ with $\mathbb{C}^{n-1}$, we see that the restriction of $\hat\pi_\sigma$ to $\mathbb{C}^*\times (V(\tilde f_{\mathbf{w}_1,\sigma}\vert_{\{y_{\sigma,1}=0\}})\cap\mathbb{C}^{*(n-1)})$ gives an isomorphism
\begin{equation*}
\mathbb{C}^*\times (V(\tilde f_{\mathbf{w}_1,\sigma}\vert_{\{y_{\sigma,1}=0\}})\cap\mathbb{C}^{*(n-1)})\overset{\sim}{\longrightarrow}
V(f_{\mathbf{w}_1})\cap\mathbb{C}^{*n}
\end{equation*}
(as usual, $V(f_{\mathbf{w}_1}):=f_{\mathbf{w}_1}^{-1}(0)$ and similarly for $V(\tilde f_{\mathbf{w}_1,\sigma}\vert_{\{y_{\sigma,1}=0\}})$); then the assertion follows from the weakly almost Newton non-degeneracy which says that the singular locus of the right-hand side of this isomorphism is $1$-dimensional.)
Pick a point $\mathbf{p}\in\mbox{Sing}(\Delta)$. In $\mathbb{C}^n_\sigma$, the coordinates of $\mathbf{p}$ are of the form $(0,p_2,\ldots,p_n)$. An analytic coordinate chart $(U_\mathbf{p},\mathbf{x}=(x_1,\dots,x_n))$ of $X$ at $\mathbf{p}$ is called \emph{admissible} (with respect to the cone $\sigma$) if $x_1=y_{\sigma,1}$ and $(x_2,\ldots,x_n)$ is an analytic coordinate change of $(y_{\sigma,2},\ldots,y_{\sigma,n})$. (In many cases, we can take $x_i=y_{\sigma,i}-p_{i}$ for $2\leq i\leq n$.) 

\begin{definition}\label{defowanndf}
We say that the \emph{weakly} almost Newton non-degenerate function $f$ is \emph{almost Newton non-degenerate} if for any $\Delta\in\mathcal{M}_0$ and any $\mathbf{p}\in \mbox{Sing}(\Delta)$, there exists an admissible coordinate chart $(U_\mathbf{p},\mathbf{x})$ of $X$ at $\mathbf{p}$ such that the function ${\hat \pi}^*f(\mathbf{x})$ on $U_\mathbf{p}$ is Newton non-degenerate with respect to the coordinates~$\mathbf{x}$.
\end{definition}

The following is an important example of such a function. It will be useful for our purpose later.

\begin{example}\label{sect-prel-example}
Take $n=3$ and suppose that $f(z_1,z_2,z_3)$ is a reduced, convenient, homogeneous polynomial of degree $d$ such that the corresponding projective curve 
\begin{equation*}
C:=\{(z_1:z_2:z_3)\in\mathbb{P}^2\mid f(z_1,z_2,z_3)=0\}
\end{equation*}
has only Newton non-degenerate singularities in some suitable local coordinates (in particular, this is always the case if the curve has only ``simple'' singularities in the sense of Arnol'd \cite{Arnold}). Assume further that all these singular points are located in $z_1z_2z_3\not=0$. Then for any integers $m\geq 1$ and $1\leq k\leq 3$, the function
\begin{equation*}
g_k(z_1,z_2,z_3):=f(z_1,z_2,z_3)+z_k^{d+m}
\end{equation*}
is almost Newton non-degenerate.
\end{example}

\begin{proof} 
To simplify, let us assume that $k=1$ and write $g(\mathbf{z})$ instead of $g_1(\mathbf{z})$. (Of course, the argument is completely similar for the other values of $k$.)
In the situation of Example~\ref{sect-prel-example}, the dual Newton diagram $\Gamma^*(f)$ has a single positive vertex $\mathbf{w}:={}^t(1,1,1)$ and $\Gamma^*(f)$ is already a regular simplicial cone subdivision of $W^+_{\mathbb{R}}$ with vertices $\mbox{Vert}(\Gamma^*(f))=\{\mathbf{w},\mathbf{e}_i\, (1\leq i\leq 3)\}$. The corresponding toric modification $\hat\pi\colon X\to\mathbb{C}^3$ is nothing but the usual point blowing-up at the origin. It has three canonical toric coordinate charts corresponding to the cones $\sigma:=C(\mathbf{w},\mathbf{e}_2,\mathbf{e}_3)$, $\sigma':=C(\mathbf{w},\mathbf{e}_1,\mathbf{e}_3)$ and $\sigma'':=C(\mathbf{w},\mathbf{e}_1,\mathbf{e}_2)$. In the chart $(\mathbb{C}^n_\sigma,\mathbf{y}_\sigma=(y_{\sigma,1},y_{\sigma,2},y_{\sigma,3}))$, the pull-back of the functions $f$ and $g$ by $\hat \pi$ are given by
\begin{align*}
& \hat\pi^*f(\mathbf{y}_\sigma)=y_{\sigma,1}^d \cdot \tilde f_\sigma(\mathbf{y}_\sigma)
=y_{\sigma,1}^d \cdot f(1,y_{\sigma,2},y_{\sigma,3})\quad\mbox{and}\\
& \hat\pi^*g(\mathbf{y}_\sigma)=y_{\sigma,1}^d \cdot \tilde g_\sigma(\mathbf{y}_\sigma)
=y_{\sigma,1}^d \cdot (\tilde f_\sigma(\mathbf{y}_\sigma)+y_{\sigma,1}^m)
=y_{\sigma,1}^d \cdot (f(1,y_{\sigma,2},y_{\sigma,3})+y_{\sigma,1}^m)
\end{align*}
respectively (see \eqref{defeqforexcdiv1} and \eqref{defeqforexcdiv2}).
The exceptional divisor $E_g(\mathbf{w})$ of the restriction of $\hat \pi$ to the strict transform of $g^{-1}(0)$ is a curve in $\hat E(\mathbf{w})$ which is defined in $\hat E(\mathbf{w})$ by the equation $\tilde g_\sigma(0,y_{\sigma,2},y_{\sigma,3})=0$, that is, by the equation $f(1,y_{\sigma,2},y_{\sigma,3})=0$. Since the singularities of $C$ are Newton non-degenerate for some suitable local coordinates, for each singular point $\mathbf{p}$ of $E_g(\mathbf{w})$ there is an admissible chart $(U_{\mathbf{p}},\mathbf{x}_\sigma=(y_{\sigma,1},x_{\sigma,2},x_{\sigma,3}))$ at $\mathbf{p}:=(0,p_2,p_3)$ such that, in this chart, the exceptional divisor $\hat E(\mathbf{w})$ is still given by $y_{\sigma,1}=0$ and the curve $E_g(\mathbf{w})$ is given in $\hat E(\mathbf{w})$ by an equation of the form $h(x_{\sigma,2},x_{\sigma,3})=0$, where $h$ is Newton non-degenerate with respect to the coordinates $(x_{\sigma,2},x_{\sigma,3})$. It follows that in the coordinates $\mathbf{x}_\sigma=(y_{\sigma,1},x_{\sigma,2},x_{\sigma,3})$, the pull-back of $g$, which is given by
\begin{equation*}
\hat\pi^*g(\mathbf{x}_\sigma)=y_{\sigma,1}^d \cdot 
(h(x_{\sigma,2},x_{\sigma,3})+y_{\sigma,1}^m),
\end{equation*}
is Newton non-degenerate. 
\end{proof}

\subsection{The Oka formula}\label{subsect-OF}
Now we have all the necessary material to recall the Oka formula \textemdash\ established in~\cite{O2} \textemdash\ for the zeta-function of the monodromy associated with the Milnor fibration at $\mathbf{0}$ of an almost Newton non-degenerate function. In fact, the proof given in \cite{O2} shows that the formula still holds true for a \emph{weakly} almost Newton non-degenerate function. \emph{So, hereafter in this subsection, we shall only assume that $f$ is weakly almost Newton non-degenerate.} We also continue with the same notation and assumptions as in \S\ref{subsect-anndf}.

For $0<\delta\ll\varepsilon$, we consider the (tubular) Milnor fibration 
\begin{equation}\label{Milnorfiboriginal}
f\colon U^*(\varepsilon,\delta)\to D_\delta^*, 
\end{equation}
of $f$ at $\mathbf{0}$, where 
\begin{equation*}
U^*(\varepsilon,\delta):=\{\mathbf{z}\in\mathbb{C}^n\,;\, 0<|f(\mathbf{z})|\leq \delta\mbox{ and }\Vert \mathbf z\Vert\leq \varepsilon\}
\quad\mbox{and}\quad 
D_\delta^*:=\{z\in\mathbb{C}\mid 0<|z|\leq\delta\}.
\end{equation*}
Clearly, since $\hat\pi$ is biholomorphic over $\mathbb{C}^n\setminus f^{-1}(0)$, this fibration can be ``lifted'' on $X$ as 
\begin{equation}\label{Milnorfiblifted}
\hat f:=f\circ\hat{\pi}\colon \hat U^*(\varepsilon,\delta):={\hat\pi}^{-1}(U^*(\varepsilon,\delta))\to D_\delta^*,
\end{equation}
 and the two fibrations \eqref{Milnorfiboriginal} and \eqref{Milnorfiblifted} are equivalent. For any face $\Delta\in\mathcal{M}_0$ and any point $\mathbf{p}\in\mbox{Sing}(\Delta)$, we also consider the local Milnor fibration 
\begin{equation}\label{localMilnorfib}
\hat f\colon \hat U^*_{\mathbf{p}}(\varepsilon',\delta)\to D_{\delta}^*
\end{equation}
 of the function $\hat f(\mathbf{x})=f\circ\hat\pi(\mathbf{x})=\hat{\pi}^*f(\mathbf{x})$ at $\mathbf{p}$, where $\delta\ll \min\{\varepsilon',\varepsilon\}$ and
\begin{equation*}
\hat U^*_{\mathbf{p}}(\varepsilon',\delta):=\{\mathbf{x}\in U_\mathbf{p}\, ; \, 0<|\hat f(\mathbf{x})|\leq \delta \mbox{ and } \|\mathbf{x}\|\leq \varepsilon'\}.
\end{equation*}
Here, $(U_\mathbf{p},\mathbf{x})$ is an admissible chart of $X$ at $\mathbf{p}$. We assume that $\delta$ is small enough, so that we can use the same $\delta$ for the local Milnor fibrations at points $\mathbf{p}\in\mbox{Sing}(\Delta)$ and for the lifted Milnor fibration \eqref{Milnorfiblifted}.
Now, we decompose the set $\hat U^*(\varepsilon,\delta)$ as
\begin{equation*}
\hat U^*(\varepsilon,\delta)=(\hat U^*(\varepsilon,\delta))' \ \cup \bigcup_{\substack{\mathbf{p}\in\mbox{\tiny Sing}(\Delta)\\ \Delta\in\mathcal{M}_0}} \hat U^*_{\mathbf{p}}(\varepsilon',\delta),
\end{equation*}
where the subset  $(\hat U^*(\varepsilon,\delta))'$ is defined by
\begin{equation*}
(\hat U^*(\varepsilon,\delta))':=\hat U^*(\varepsilon,\delta) \ \bigg\backslash \ 
\bigcup_{\substack{\mathbf{p}\in\mbox{\tiny Sing}(\Delta)\\ \Delta\in\mathcal{M}_0}} \hat U^*_{\mathbf{p}}(\varepsilon'',\delta)
\end{equation*}
 with $\varepsilon''$ a bit smaller than $\varepsilon'$, and we consider the corresponding decomposition of the lifted Milnor fibration \eqref{Milnorfiblifted}. We denote by $\zeta'(t)$ the zeta-function of the monodromy associated with the fibration $\hat f\colon (\hat U^*(\varepsilon,\delta))'\to D_{\delta}^*$, and as usual we write $\zeta_{\hat f,\mathbf{p}}(t)$ for the zeta-function of the monodromy associated with the local Milnor fibration of $\hat f$ at~$\mathbf{p}$. If $P$ and $P_0$ denote the sets of primitive positive weight vectors corresponding to $\mathcal{M}$ and $\mathcal{M}_0$, respectively, then, by \cite[Lemma~3 and Theorem 8]{O2}, the zeta-function $\zeta'(t)$ is given by
\begin{equation}\label{okaformula11}
\begin{aligned}
\prod_{\substack{I\subsetneq\{1,\ldots,n\}\\ I\not=\emptyset}}\zeta_I(t)\ & \times 
\prod_{\mathbf{w}\in P\setminus P_0} (1-t^{d(\mathbf{w};f)})^{-\chi(\mathbf{w})} \\
& \times\ \prod_{\mathbf{w}\in P_0} (1-t^{d(\mathbf{w};f)})^{-\chi(\mathbf{w})+(-1)^{n-1}\sum_{\mathbf{p}\in\mbox{\tiny Sing}(\Delta(\mathbf{w};f))}\, \mu_{\mathbf{p}}}
\end{aligned}
\end{equation}
and the zeta-function $\zeta_{f,\mathbf{0}}(t)$ of the monodromy associated with the Milnor fibration of $f$ at $\mathbf{0}$ is given by
\begin{equation}\label{okaformula12}
\zeta_{f,\mathbf{0}}(t)=\zeta'(t) \cdot\prod_{\substack{\mathbf{p}\in\mbox{\tiny Sing}(\Delta)\\ \Delta\in\mathcal{M}_0}} \zeta_{\hat f,\mathbf{p}}(t).
\end{equation}
In \eqref{okaformula11}, the factors $\zeta_I(t)$ and $(1-t^{d(\mathbf{w};f)})^{-\chi(\mathbf{w})}$ for $\mathbf{w}\in P\setminus P_0$ are as in Varchenko's formula \eqref{varchenkoformula} and $\mu_{\mathbf{p}}$ denotes the Milnor number at $\mathbf{p}$ of the hypersurface $E(\mathbf{w})$ of $\hat E(\mathbf{w})$.
The zeta-function $\zeta'(t)$ can be rewritten as 
\begin{equation}\label{okaformula13}
\zeta'(t)=\zeta_{f_s}(t)\cdot \prod_{\mathbf{w}\in P_0} (1-t^{d(\mathbf{w};f)})^{(-1)^{n-1}\sum_{\mathbf{p}\in\mbox{\tiny Sing}(\Delta(\mathbf{w};f))}\, \mu_{\mathbf{p}}},
\end{equation}
where $\{f_s(\mathbf{z})\}_{|s|<1}$ is an analytic deformation family of $f$ with respect to a parameter $s$ (i.e., for $s=0$ we have $f_0=f$) which is obtained from a small perturbation of the coefficients of the functions $f_\Delta$ for $\Delta\in P_0$ (so, in particular, we have $\Gamma(f_s)=\Gamma(f)$ for all $s$) such that $f_s$ is Newton non-degenerate for all $s\not=0$. Here, $\zeta_{f_s}(t)$ denotes the zeta-function $\zeta_{f_s,\mathbf{0}}(t)$ of the monodromy associated with the Milnor fibration of $f_s$ at $\mathbf{0}$ for $s\not=0$ (which is of course independent of such an $s$).

\begin{remark}
If, in addition, we assume that $f$ is almost Newton non-degenerate, 
then the zeta-functions $\zeta_{\hat f,\mathbf{p}}(t)$ in \eqref{okaformula12} can be computed using the Varchenko formula \eqref{varchenkoformula}. 
\end{remark}

\begin{remark}\label{rk28}
In the definition of almost Newton non-degenerate functions given in \cite{O2}, it is assumed that the function $\hat \pi^*f$ is ``pseudo convenient'' at $\mathbf{p}$ (i.e., of the form $\hat \pi^*f(\mathbf{z})=\mathbf{z}^{\alpha} h(\mathbf{z})$ in a neighbourhood of $\mathbf{p}$, where $h$ is a convenient function). However, this assumption is not necessary to obtain the formulas \eqref{okaformula11}--\eqref{okaformula13}. Indeed, the proof of these formulas uses the A'Campo--Oka formula \eqref{ACampoformula}. If $\hat \pi^*f$ is not ``pseudo convenient'' at $\mathbf{p}$, then the toric modification at $\mathbf{p}$ constructed in the course of the proof may contain non-compact exceptional divisors. However, the A'Campo--Oka formula still holds true in this case.
\end{remark}

Like for \eqref{ACampoformula} and \eqref{varchenkoformula}, the formulas \eqref{okaformula11}--\eqref{okaformula13} do hold true for possibly non-isolated singularities. Also, of course, in the special case where $f$ has an isolated singularity at $\mathbf{0}$, the Milnor number $\mu_{\mathbf{0}}(f)$ of $f$ at $\mathbf{0}$ can be computed from the zeta-function $\zeta_{f,\mathbf{0}}(t)$ using the following formula:
\begin{equation}\label{oka-relmilnornumber}
-1+(-1)^n\mu_{\mathbf{0}}(f)=\deg \zeta_{f,\mathbf{0}}(t) .
\end{equation}

\section{A shift formula for the Milnor number}\label{sect-sffmn}
Here, we prove the shift formula for the Milnor number mentioned in the introduction (see Theorem \ref{mt1} below). This formula will be used in Section \ref{sect-mszph} when studying $\mu^*$-Zariski pairs of surfaces. The main tool for the proof is the Oka formula \eqref{okaformula11}--\eqref{okaformula13} for the zeta-function of an almost Newton non-degenerate function.

Throughout this section, let $\mathbf{z}=(z_1,\ldots,z_n)$ be coordinates for $\mathbb{C}^n$ ($n\geq 2$), and let $f(\mathbf{z})=\sum_\alpha a_\alpha \mathbf{z}^\alpha$ be a convenient weighted homogeneous polynomial function with respect to a primitive weight vector $\mathbf{w}={}^t(w_{1},\ldots,w_{n})\in W^+\setminus\{\mathbf{0}\}$. Denote by $d$ the corresponding weighted degree of $f$, and as usual write $V:=f^{-1}(0)$ for the hypersurface of $\mathbb{C}^n$ defined by $f$. We assume that $f(\mathbf{0})=0$ and that the singular locus of $V$ is $1$-dimensional. We also suppose that for any proper subset $I\subsetneq\{1,\ldots,n\}$, the restriction $f^I:=f\vert_{\mathbb{C}^I}$ is Newton non-degenerate. In particular, since $f$ is weighted homogeneous and since the singular locus of $V$ is $1$-dimensional, this implies that $f$ is weakly almost Newton non-degenerate (see Definition \ref{defweaklyowanndf}).
Besides, since $f$ is convenient, the expression $f(\mathbf{z})=\sum_\alpha a_\alpha \mathbf{z}^\alpha$ necessarily contains (up to a coefficient) a monomial of the form $z_i^{d_i}$ for each $1\leq i\leq n$, and by the weighted homogeneity, we have $w_{i}d_i=d$. The convenience also implies that there exists a regular simplicial cone subdivision $\Sigma^*$ of $W^+_{\mathbb{R}}$ which is admissible with respect to the dual Newton diagram $\Gamma^*(f)$ and such that the vertices of $\Sigma^*$ different from the $\mathbf{e}_i$'s ($1\leq i\leq n$) are positive.
Let $\hat{\pi}\colon X\to \mathbb{C}^n$ be the toric modification associated with such a subdivision, and let $\sigma=C(\mathbf{w}_1,\ldots,\mathbf{w}_n)$ be an $n$-dimensional cone of $\Sigma^*$ with $\mathbf{w}_1=\mathbf{w}$. As above, we denote by $(\mathbb{C}^n_\sigma,\mathbf{y}_\sigma=(y_{\sigma,1},\ldots,y_{\sigma,n}))$ the corresponding toric coordinate chart of $X$. Since $f$ is weakly almost Newton non-degenerate, the hypersurface $E(\mathbf{w})$ of $\hat E(\mathbf{w})$  has only a finite number of singular points (see \S \ref{subsect-VF}). 

\begin{definition}\label{def-psND}
With the above assumptions, we say that $f$ is \emph{Newton pre-non-degenerate} if for each singular point $\mathbf{p}$ of $E(\mathbf{w})$, there exists an admissible coordinate chart $(U_\mathbf{p},\mathbf{x}_\mathbf{p}=(x_{\mathbf{p},1},\ldots,x_{\mathbf{p},n}))$ of $X$ at $\mathbf{p}$ with respect to the cone $\sigma$ \textemdash\ i.e., $x_{\mathbf{p},1}=y_{\sigma,1}$ and $\mathbf{x}'_\mathbf{p}:=(x_{\mathbf{p},2},\ldots,x_{\mathbf{p},n})$ is an analytic coordinate change of $(y_{\sigma,2},\ldots,y_{\sigma,n})$; in particular, $\mathbf{x}'_\mathbf{p}$ are analytic coordinates for $\hat E(\mathbf{w})$ \textemdash\ such that the defining function $h_\mathbf{p}(x_{\mathbf{p},2},\ldots,x_{\mathbf{p},n})$ of the hypersurface $E(\mathbf{w})$ is convenient and Newton non-degenerate with respect to the coordinates $\mathbf{x}'_\mathbf{p}$.
\end{definition} 

Now, for any $1\leq k\leq n$, consider the function 
\begin{equation*}
g_k(\mathbf{z}):=f(\mathbf{z})+z_k^{d_k+m},
\end{equation*} 
where $m$ is an integer $\geq 1$. The main result of this section says that under the Newton pre-non-degeneracy condition for $f$, the function $g_k$ is an almost Newton non-degenerate function with an isolated singularity at the origin and its Milnor number at $\mathbf{0}$ can be described in terms of the integers $d_1,\ldots,d_n$, the weight $\mathbf{w}$ and the Milnor numbers of the hypersurface singularities $(E(\mathbf{w}),\mathbf{p})$ for $\mathbf{p}$ running in the (finite) set of singular points of $E(\mathbf{w})$. More precisely, we have the following statement which generalizes \cite[Theorem~18]{O2} where the assertion is proved for homogeneous polynomials with $m=1$.

\begin{theorem}\label{mt1} 
Under the assumptions described in the preamble of the present section and if furthermore $f$ is Newton pre-non-degenerate, then for any integer $m\geq 1$ the polynomial function 
\begin{equation*}
g_k(\mathbf{z})=f(\mathbf{z})+z_k^{d_k+m}
\end{equation*}
 is an almost Newton non-degenerate function with an isolated singularity at the origin and its Milnor number $\mu_\mathbf{0}(g_k)$ at $\mathbf{0}$ is given by
\begin{equation}\label{edmt1}
\mu_\mathbf{0}(g_k)=\prod_{i=1}^n (d_i-1) + 
m\, w_k\, \mu^{\mbox{\tiny \emph{tot}}}.
\end{equation}
Here, $\mu^{\mbox{\tiny \emph{tot}}}:=\sum \mu_\mathbf{p}$ where the sum is taken over all points $\mathbf{p}$ contained in the (finite) set $\mbox{\emph{Sing}}(E(\mathbf{w}))$ consisting of the singular points of the hypersurface $E(\mathbf{w})$ and where $\mu_\mathbf{p}$ denotes the Milnor number of the hypersurface singularity $(E(\mathbf{w}),\mathbf{p})$.
\end{theorem}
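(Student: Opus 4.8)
The plan is to establish the two assertions of Theorem~\ref{mt1} separately: first that $g_k$ is almost Newton non-degenerate with an isolated singularity at $\mathbf{0}$, and second the explicit formula \eqref{edmt1} for $\mu_{\mathbf{0}}(g_k)$. For the first part, I would argue as in Example~\ref{sect-prel-example}: since $f$ is convenient and weighted homogeneous with respect to $\mathbf{w}$, the dual Newton diagram $\Gamma^*(g_k)$ agrees with $\Gamma^*(f)$ away from the interior, and adding $z_k^{d_k+m}$ with $m\geq 1$ only lengthens the Newton boundary along the $z_k$-axis without introducing new degenerate faces of dimension $\leq n-2$ (these remain governed by the Newton non-degeneracy of the restrictions $f^I$, $I\subsetneq\{1,\ldots,n\}$). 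The only possibly degenerate maximal face is $\Delta(\mathbf{w};f)$ itself; pulling back by $\hat\pi$ in the chart $(\mathbb{C}^n_\sigma,\mathbf{y}_\sigma)$ one gets $\hat\pi^*g_k(\mathbf{y}_\sigma)=y_{\sigma,1}^{\,d}\,(\tilde f_\sigma(\mathbf{y}_\sigma)+y_{\sigma,1}^{\,m w_k})$ (using $w_1 d_k + \cdots$ bookkeeping, i.e. the $\mathbf{w}$-order of $z_k^{d_k+m}$ is $d+m w_k$), and passing to an admissible chart $(U_{\mathbf p},\mathbf{x}_{\mathbf p})$ around a singular point $\mathbf p$ of $E(\mathbf w)$ turns $\tilde f_\sigma$ into the convenient Newton non-degenerate $h_{\mathbf p}(x_{\mathbf p,2},\ldots,x_{\mathbf p,n})$ by the Newton pre-non-degeneracy hypothesis; then $\hat\pi^*g_k=x_{\mathbf p,1}^{\,d}\,(h_{\mathbf p}(\mathbf x'_{\mathbf p})+x_{\mathbf p,1}^{\,m w_k})$ is Newton non-degenerate with respect to $\mathbf x_{\mathbf p}$ because adding a pure power of a new variable to a convenient non-degenerate function preserves non-degeneracy (a Thom–Sebastiani-type observation on Newton boundaries). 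This gives the almost Newton non-degeneracy, and convenience of $g_k$ plus the $1$-dimensionality of $\operatorname{Sing}(V)$ forces $g_k$ to have an isolated singularity at $\mathbf 0$ (the added term kills the one-dimensional part of the singular locus along the $z_k$-direction, and along the other directions $g_k^I=f^I$ is non-degenerate hence has isolated singularity).

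For the formula, the strategy is to compute $\deg\zeta_{g_k,\mathbf{0}}(t)$ via the Oka formula \eqref{okaformula11}--\eqref{okaformula13} applied to $g_k$ and then invoke \eqref{oka-relmilnornumber}, namely $-1+(-1)^n\mu_{\mathbf 0}(g_k)=\deg\zeta_{g_k,\mathbf 0}(t)$. The set $\mathcal M_0$ for $g_k$ consists of the single face $\Delta(\mathbf w;g_k)$ (whose $\mathbf w$-degree is $d$, unchanged), and its singular set $\operatorname{Sing}(\Delta(\mathbf w;g_k))$ is precisely $\operatorname{Sing}(E(\mathbf w))$ with the same local Milnor numbers $\mu_{\mathbf p}$, since the strict transform equation $\tilde{(g_k)}_\sigma(0,y_{\sigma,2},\ldots,y_{\sigma,n})=\tilde f_\sigma(0,\ldots)$ is literally the same. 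Writing $\zeta_{g_k,\mathbf 0}=\zeta'(t)\cdot\prod_{\mathbf p}\zeta_{\hat\pi^*g_k,\mathbf p}(t)$ as in \eqref{okaformula12}, I would: (i) handle $\zeta'(t)$ using \eqref{okaformula13}, where $\zeta_{(g_k)_s}(t)$ is the zeta-function of a Newton non-degenerate deformation of $g_k$ with the same Newton boundary, whose degree contributes $-1+(-1)^n\prod_{i=1}^n(d_i-1)$ by Varchenko's formula \eqref{var-relmilnornumber} (the Newton-non-degenerate $\mu$ for a convenient weighted-homogeneous-type boundary $z_1^{d_1}+\cdots+z_n^{d_n}+\cdots$ being $\prod(d_i-1)$, which one checks is the Newton number of $\Gamma(g_k)$); (ii) compute the degree of the correction factor $\prod_{\mathbf w'\in P_0}(1-t^{d(\mathbf w';g_k)})^{(-1)^{n-1}\sum_{\mathbf p}\mu_{\mathbf p}}$, here a single factor $\mathbf w'=\mathbf w$ with $d(\mathbf w;g_k)=d$, contributing $(-1)^{n-1}\,d\,\mu^{\text{tot}}$ to $\deg\zeta'(t)$; (iii) compute each $\deg\zeta_{\hat\pi^*g_k,\mathbf p}(t)$ from Varchenko applied in the chart $U_{\mathbf p}$ to $\hat\pi^*g_k=x_{\mathbf p,1}^{\,d}(h_{\mathbf p}(\mathbf x'_{\mathbf p})+x_{\mathbf p,1}^{\,m w_k})$. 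This last local computation is the arithmetic heart: by Thom–Sebastiani for Newton numbers, the Milnor number of $h_{\mathbf p}(\mathbf x'_{\mathbf p})+x_{\mathbf p,1}^{\,m w_k}$ is $(m w_k - 1)\,\mu_{\mathbf p}$, but one must correctly account for the extra factor $x_{\mathbf p,1}^{\,d}$ (a non-reduced divisor) in the A'Campo–Oka bookkeeping, so $\deg\zeta_{\hat\pi^*g_k,\mathbf p}(t)$ is not simply $-1+(-1)^n(m w_k-1)\mu_{\mathbf p}$ but carries an extra $(-1)^n\mu_{\mathbf p}$-type term coming from the exceptional divisor $x_{\mathbf p,1}=0$ and its intersection with the strict transform. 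Assembling (i)--(iii) and the cancellation of the $(-1)^{n-1}d\,\mu^{\text{tot}}$ terms between $\zeta'(t)$ and $\prod_{\mathbf p}\zeta_{\hat\pi^*g_k,\mathbf p}(t)$, one should land on $-1+(-1)^n\big(\prod_{i=1}^n(d_i-1)+m w_k\,\mu^{\text{tot}}\big)=\deg\zeta_{g_k,\mathbf 0}(t)$, which is \eqref{edmt1}.

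The main obstacle I anticipate is step (iii) together with the bookkeeping of the non-reduced factor $x_{\mathbf p,1}^{\,d}$: one cannot blindly quote Thom–Sebastiani because $\hat\pi^*g_k$ near $\mathbf p$ is $x_{\mathbf p,1}^{\,d}\cdot(\text{convenient ND function of all }n\text{ variables})$, and the local zeta-function $\zeta_{\hat\pi^*g_k,\mathbf p}(t)$ must be extracted so that, when multiplied with $\zeta'(t)$ and with the term $(1-t^d)^{(-1)^{n-1}\sum\mu_{\mathbf p}}$ coming from $\hat E(\mathbf w)$, the divisor $\hat E(\mathbf w)$ is counted exactly once globally. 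Concretely I expect to resolve $h_{\mathbf p}(\mathbf x'_{\mathbf p})+x_{\mathbf p,1}^{\,m w_k}$ by a further toric modification adapted to its Newton boundary (as in the cited \cite[Theorem~18]{O2} for the case $m=1$), track which new exceptional divisors are compact, and apply the A'Campo--Oka formula \eqref{ACampoformula} directly; the key quantitative input will be computing the relevant Euler characteristics $\chi(E'_i)$, where the factor $m$ enters linearly because the added variable $x_{\mathbf p,1}$ appears to the power $m w_k$, yielding the coefficient $m w_k$ in front of $\mu^{\text{tot}}$. A secondary technical point is verifying that the deformation $(g_k)_s$ in \eqref{okaformula13} can be taken so that its Newton non-degenerate Milnor number is exactly $\prod_{i=1}^n(d_i-1)$ — this is a direct Kouchnirenko computation of the Newton number of $\Gamma(g_k)=\Gamma(f)$ enlarged along the $z_k$-axis, and one checks the enlargement does not change the Newton number since $d_k+m > d_k$ only extends a face the volume count of which is already determined by $d_1,\ldots,d_n$ via the weighted-homogeneous structure.
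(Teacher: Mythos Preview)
Your overall strategy is essentially the paper's: show almost Newton non-degeneracy by passing to admissible coordinates at each $\mathbf{p}\in\mbox{Sing}(E(\mathbf{w}))$, then compute $\deg\zeta_{g_k,\mathbf{0}}$ via the Oka formula \eqref{okaformula12}--\eqref{okaformula13} split into the Newton non-degenerate deformation piece and the local pieces $\zeta_{\hat\pi^*g_k,\mathbf{p}}$. One correction at the outset: the Newton boundary does \emph{not} change. Since $f$ is convenient, it already contains a nonzero monomial $a_kz_k^{d_k}$, so the added vertex $(0,\ldots,d_k+m,\ldots,0)$ lies strictly above $\Gamma(f)$ and hence $\Gamma(g_k)=\Gamma(f)$ and $\Gamma^*(g_k)=\Gamma^*(f)$ exactly. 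This means the same $\Sigma^*$ and $\hat\pi$ work for both, and there are no ``new faces'' to worry about. Also, your pullback formula in the chart $\mathbb{C}^n_\sigma$ omits the monomial factors in the other $y_{\sigma,i}$'s (see \eqref{pomt1-exprpb} in the paper); these become units at $\mathbf{p}$ since $p_i\neq 0$ for $i\geq 2$, so your conclusion in the admissible chart is unaffected, but the formula as written is only correct up to units near $\mathbf{p}$. For the isolated singularity, the paper does not argue as you do; it observes that since each $h_{\mathbf{p}}$ is convenient, a further admissible toric modification at each $\mathbf{p}$ glues to a good resolution $\hat\Pi=\hat\pi\circ\hat\omega$ with all exceptional divisors compact, which forces the singularity to be isolated.

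The one place where the paper is sharper than your sketch is your step (iii). Rather than invoking Thom--Sebastiani on $h_{\mathbf{p}}+x_{\mathbf{p},1}^{mw_k}$ and then separately ``accounting for'' the non-reduced factor $x_{\mathbf{p},1}^d$, the paper applies Varchenko's formula \eqref{varchenkoformula} directly to the pseudo-convenient Newton non-degenerate germ $\hat\pi^*g$ at $\mathbf{p}$. The degree is then a signed sum of normalized volumes $|I|!\,\mbox{Vol}_{|I|}(\Gamma_{-}(\hat g)^I)$, and a one-line volume identity (\cite[Assertion~19]{O2}) gives $|I|!\,\mbox{Vol}_{|I|}(\Gamma_{-}(\hat g)^I)=(d+mw_k)\,|I'|!\,\mbox{Vol}_{|I'|}(\Gamma_{-}(h_{\mathbf{p}})^{I'})$ with $I'=I\setminus\{1\}$. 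Summing and applying Kouchnirenko to the convenient non-degenerate $h_{\mathbf{p}}$ yields the clean closed form $\deg\zeta_{\hat g,\mathbf{p}}=(-1)^n(d+mw_k)\,\mu_{\mathbf{p}}$. This combines with your (i) and (ii) exactly as you anticipate: the $d\,\mu_{\mathbf{p}}$ part cancels the $(-1)^{n-1}d\,\mu^{\mathrm{tot}}$ correction from $\zeta'(t)$, leaving $mw_k\,\mu^{\mathrm{tot}}$. Your Thom--Sebastiani route can be made to work, but it is more circuitous; the direct volume computation avoids any double-counting of $\hat E(\mathbf{w})$.
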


\begin{proof}
To simplify, let us assume that $k=1$ and write $g(\mathbf{z})$ instead of $g_1(\mathbf{z})$. (Of course, the argument is completely similar for the other values of $k$.)
In the chart $(\mathbb{C}^n_\sigma,\mathbf{y}_\sigma=(y_{\sigma,1},\ldots,y_{\sigma,n}))$ of $X$, the pull-back of the functions $f$ and $g$ by $\hat \pi$ are given by
\begin{equation}\label{pomt1-exprpb}
\begin{aligned}
& \hat\pi^*f(\mathbf{y}_\sigma)=\tilde f_\sigma(\mathbf{y}_\sigma)\cdot\prod_{i=1}^n y_{\sigma,i}^{d(\mathbf{w}_i;f)}\quad\mbox{and}\\
& \hat\pi^*g(\mathbf{y}_\sigma)=\bigg(\tilde f_\sigma(\mathbf{y}_\sigma) + 
y_{\sigma,1}^{mw_1}\cdot\prod_{i=2}^n y_{\sigma,i}^{(d_1+m)w_{1,i}-d(\mathbf{w}_i;f)}\bigg)
\cdot\prod_{i=1}^n y_{\sigma,i}^{d(\mathbf{w}_i;f)}
\end{aligned}
\end{equation}
respectively, where
\begin{equation}\label{sect-sfm-expftg}
\begin{aligned}
\tilde f_\sigma(\mathbf{y}_\sigma) 
& =\sum_{\alpha}a_\alpha\, y_{\sigma,1}^{\ell_{\mathbf{w}_1}(\alpha)-d(\mathbf{w}_1;f)}\cdots\, y_{\sigma,n}^{\ell_{\mathbf{w}_n}(\alpha)-d(\mathbf{w}_n;f)}\\
& = \sum_{\alpha}a_\alpha\, y_{\sigma,2}^{\ell_{\mathbf{w}_2}(\alpha)-d(\mathbf{w}_2;f)}\cdots\, y_{\sigma,n}^{\ell_{\mathbf{w}_n}(\alpha)-d(\mathbf{w}_n;f)}
\end{aligned}
\end{equation}
(see \eqref{defeqforexcdiv1} and \eqref{defeqforexcdiv2}). Here, as in \eqref{linfct}, for each $1\leq i\leq n$ we write $\ell_{\mathbf{w}_i}(\alpha):=\sum_{j=1}^n w_{j,i}\alpha_j$ where as above ${}^t(w_{1,i},\ldots,w_{n,i})=\mathbf{w}_i$. The second equality in \eqref{sect-sfm-expftg} follows from the weighted homogeneity of $f$ with respect to the weight $\mathbf{w}_1=\mathbf{w}$, which implies that the difference $\ell_{\mathbf{w}_1}(\alpha)-d(\mathbf{w}_1;f)$ is zero for all indexes $\alpha$ that appear in the expression $f(\mathbf{z})=\sum_\alpha a_\alpha \mathbf{z}^\alpha$. So, in particular, $\tilde f_\sigma(\mathbf{y}_\sigma)$ does not depend on $y_{\sigma,1}$. Hereafter, we shall write $\tilde f'_\sigma(\mathbf{y}'_\sigma):=\tilde f_\sigma(\mathbf{y}_\sigma)$, where $\mathbf{y}'_\sigma:=(y_{\sigma,2},\ldots,y_{\sigma,n})$.
 
Now, let $\mathbf{p}$ be a singular point of $E(\mathbf{w})$ and let $(0,p_2,\ldots,p_n)$ be its coordinates in the chart $(\mathbb{C}^n_\sigma,\mathbf{y}_\sigma)$. Note that $p_i\not=0$ for any $2\leq i\leq n$. Indeed, for such $i$'s the point $\mathbf{p}$ cannot be in the intersection  $E(\mathbf{w}_i)\cap E(\mathbf{w})$ since the weighted homogeneity of $f$ implies $\dim \Delta(\mathbf{w}_i;f)\leq n-2$, and hence the Newton non-degeneracy assumption for $f^I$, $I\subsetneq \{1,\ldots,n\}$, implies that $f$ is Newton non-degenerate on $\Delta(\mathbf{w}_i;f)\cap\Delta(\mathbf{w}_1;f)$.
Consider the coordinates $(y_{\sigma,1},y_{\sigma,2}-p_2,\ldots,y_{\sigma,n}-p_n)$, which are centred at~$\mathbf{p}$.
Since $f$ is Newton pre-non-degenerate, there exists an analytic coordinate chart $(U_\mathbf{p},\mathbf{x}_\mathbf{p}=(x_{\mathbf{p},1},\ldots,x_{\mathbf{p},n}))$ of $X$ at $\mathbf{p}$ (i.e., $\mathbf{x}_\mathbf{p}(\mathbf{p})=\mathbf{0}$) such that $x_{\mathbf{p},1}=y_{\sigma,1}$, $\mathbf{x}'_\mathbf{p}:=(x_{\mathbf{p},2},\ldots,x_{\mathbf{p},n})$ is an analytic coordinate change of $(y_{\sigma,2}-p_2,\ldots,y_{\sigma,n}-p_n)$ \textemdash\ that is, there exists $\Phi=(\phi_2,\ldots,\phi_n)\in\mbox{Aut}(\mathbb{C}^{n-1})$ such that $y_{\sigma,i}-p_i=\phi_i(\mathbf{x}'_\mathbf{p})$ and $\phi_i(\mathbf{0})=0$ for any $2\leq i\leq n$ \textemdash\ and the defining function of the hypersurface $E(\mathbf{w})$ is convenient and Newton non-degenerate with respect to the coordinates $\mathbf{x}'_\mathbf{p}$ of $\hat E(\mathbf{w})$. Writing $\mathbf{p}':=(p_2,\ldots,p_n)$, we easily deduce from \eqref{pomt1-exprpb} that the pull-back of the functions $f$ and $g$ by $\hat \pi$ in the coordinates $\mathbf{x}_{\mathbf{p}}=(x_{\mathbf{p},1},\mathbf{x}'_\mathbf{p})$ are given by
\begin{equation}\label{rajout742022}
\begin{aligned}
& \hat\pi^*f(\mathbf{x}_{\mathbf{p}})= x_{\mathbf{p},1}^d \Psi(\mathbf{x}'_\mathbf{p}) 
\tilde f'_\sigma(\mathbf{p}'+\Phi(\mathbf{x}'_{\mathbf{p}}))
\quad\mbox{and}\\
& \hat\pi^*g(\mathbf{x}_{\mathbf{p}})=x_{\mathbf{p},1}^d \Psi(\mathbf{x}'_\mathbf{p}) \Big(\tilde f'_\sigma(\mathbf{p}'+\Phi(\mathbf{x}'_{\mathbf{p}}))+x_{\mathbf{p},1}^{mw_1}\Theta(\mathbf{x}'_{\mathbf{p}})\Big),
\end{aligned}
\end{equation}
where
\begin{align*}
\Psi(\mathbf{x}'_\mathbf{p}):=
\prod_{i=2}^n (p_i+\phi_i(\mathbf{x}'_\mathbf{p}))^{d(\mathbf{w}_i;f)}
\quad\mbox{and}\quad
\Theta(\mathbf{x}'_{\mathbf{p}}):=
\prod_{i=2}^n (p_i+\phi_i(\mathbf{x}'_\mathbf{p}))^{(d_1+m)w_{1,i}-d(\mathbf{w}_i;f)}.
\end{align*}
By the Newton pre-non-degeneracy of $f$, the defining function $\tilde f'_\sigma(\mathbf{p}'+\Phi(\mathbf{x}'_{\mathbf{p}}))$ of the hypersurface $E(\mathbf{w})$ is convenient and Newton non-degenerate with respect to the coordinates $\mathbf{x}'_\mathbf{p}$ of $\hat E(\mathbf{w})$, and since
\begin{equation*}
\Psi(\mathbf{0})=\prod_{i=2}^n p_i^{d(\mathbf{w}_i;f)}\not=0
\quad\mbox{and}\quad
\Theta(\mathbf{0})=\prod_{i=2}^n p_i^{(d_1+m)w_{1,i}-d(\mathbf{w}_i;f)}\not=0,
\end{equation*} 
it follows that in the coordinates $\mathbf{x}_\mathbf{p}$ the function $\hat\pi^*g$ is pseudo convenient, the Newton boundaries of $\hat\pi^*g(\mathbf{x}_\mathbf{p})$ and $x_{\mathbf{p},1}^d (\tilde f'_\sigma(\mathbf{p}'+\Phi(\mathbf{x}'_{\mathbf{p}}))+x_{\mathbf{p},1}^{mw_1})$ are the same, and $\hat\pi^*g$ is Newton non-degenerate. In particular, this shows that the function $g$ is almost Newton non-degenerate.

Since $\hat\pi^*g(\mathbf{x}_\mathbf{p})$ is pseudo convenient, there exists a subdivision $\Sigma^*_{\mathbf{p}}$ of $W_{\mathbb{R}}^+$ which is admissible with respect to the dual Newton diagram $\Gamma^*(\hat\pi^*g;\mathbf{x}_\mathbf{p})$ of $\hat\pi^*g$ with respect to the coordinates $\mathbf{x}_\mathbf{p}$ and such that all the vertices of $\Sigma^*_{\mathbf{p}}$ are positive except the $\mathbf{e}_i$'s ($1\leq i\leq n$). 
Let $\hat\omega_{\mathbf{p}}\colon Y_\mathbf{p}\to U_{\mathbf{p}}$ be the toric modification associated with $\Sigma^*_{\mathbf{p}}$, and let $\hat\omega\colon Y\to X$ be the canonical gluing of the union of these toric modifications as $\mathbf{p}$ runs over all the singular points of $E(\mathbf{w})$. Then the composition
\begin{equation*}
\hat\Pi\colon Y \xrightarrow{\ \hat\omega \ } X \xrightarrow{\ \hat\pi \ } \mathbb{C}^n
\end{equation*}
gives a good resolution of $g$ and the exceptional divisors of $\hat\Pi$ are all compact. In particular, this implies that $g$ has an isolated singularity at the origin, and its Milnor number $\mu_\mathbf{0}(g)$ can be computed from the zeta-function $\zeta_{g,\mathbf{0}}(t)$ of the monodromy associated with the Milnor fibration of $g$ at $\mathbf{0}$ using the formula \eqref{oka-relmilnornumber}.
Now, since in our case the set $P_0$ that appears in the formulas \eqref{okaformula11}--\eqref{okaformula13} reduces to the single (primitive positive) weight vector $\mathbf{w}$ \textemdash\ which is associated with the unique maximal dimensional face $\Delta(\mathbf{w};g)=\Delta(\mathbf{w};f)$ of $\Gamma(g)=\Gamma(f)$ \textemdash\ and since $d(\mathbf{w};g)=d(\mathbf{w};f)=d$, these formulas give
\begin{equation*}
\zeta_{g,\mathbf{0}}(t)=\zeta_{g_s}(t)\times (1-t^{d})^{(-1)^{n-1}\mu^{\mbox{\tiny tot}}} \times \prod_{\substack{\mathbf{p}\in\mbox{\tiny Sing}(E(\mathbf{w})})} \zeta_{\hat g,\mathbf{p}}(t),
\end{equation*}
so that the formula \eqref{oka-relmilnornumber} for the Milnor number is written as
\begin{equation*}
\mu_{\mathbf{0}}(g)=(-1)^n+(-1)^n\deg \zeta_{g_s}(t) + 
\sum_{\mathbf{p}\in\mbox{\tiny Sing}(E(\mathbf{w})}\big((-1)^n\deg \zeta_{\hat g,\mathbf{p}}(t)-d\mu_{\mathbf{p}}\big).
\end{equation*}
Here, $\mu_{\mathbf{p}}$ is the Milnor number of the hypersurface singularity $(E(\mathbf{w}),\mathbf{p})$, $\mu^{\mbox{\tiny tot}}$ is the sum (over all $\mathbf{p}\in\mbox{Sing}(E(\mathbf{w}))$) of the $\mu_{\mathbf{p}}$'s, and the family $\{g_s(\mathbf{z})\}_{|s|<1}$ is an analytic deformation of $g$ obtained from a small perturbation of the coefficients of the face function 
\begin{equation*}
g_{\Delta(\mathbf{w};g)}=f_{\Delta(\mathbf{w};f)}=f
\end{equation*}
 such that $g_s$ is Newton non-degenerate for all $s\not=0$. Again, we emphasize that the zeta-function $\zeta_{g_s,\mathbf{0}}(t)$ of the monodromy associated with the Milnor fibration  of $g_s$ at $\mathbf{0}$ is independent of $s\not=0$, and $\zeta_{g_s}(t)$ is nothing but a notation for the zeta-function $\zeta_{g_s,\mathbf{0}}(t)$ for $s\not=0$. Finally, in the above formula, in order to simplify and as in \S\ref{subsect-OF}, we have written $\hat g:=\hat\pi^*g$.

To establish the formula \eqref{edmt1}, it remains to compute $\deg \zeta_{g_s,\mathbf{0}}(t)$ for $s\not=0$ and $\deg \zeta_{\hat g,\mathbf{p}}(t)$. Since $g_s$, $s\not=0$, and $\hat g$ are Newton non-degenerate, we can apply the formula \eqref{var-relmilnornumber}. Pick any $s\not=0$, and let us start with the calculation of $\deg \zeta_{g_s,\mathbf{0}}(t)$. By \eqref{var-relmilnornumber}, we have
\begin{equation*}
\deg \zeta_{g_s,\mathbf{0}}(t)=-1+(-1)^n\mu_{\mathbf{0}}(g_s),
\end{equation*}
and we must compute $\mu_{\mathbf{0}}(g_s)$. As $g_s$ is convenient and Newton non-degenerate, the Milnor numbers at $\mathbf{0}$ of $g_s$ and of the face function $(g_s)_{\Delta(\mathbf{w};g)}$ are equal, and since $(g_s)_{\Delta(\mathbf{w};g)}$ is weighted homogeneous of weighted degree $d$ with respect to the weight $\mathbf{w}=(w_1,\ldots,w_n)$, the Milnor--Orlik formula \cite{MO} says that the Milnor number $\mu_{\mathbf{0}}((g_s)_{\Delta(\mathbf{w};g)})$ is given by
\begin{equation*}
\mu_{\mathbf{0}}((g_s)_{\Delta(\mathbf{w};g)})=\prod_{i=1}^n\bigg(\frac{d}{w_i}-1\bigg)=\prod_{i=1}^n(d_i-1).
\end{equation*}
So, altogether, we have 
\begin{equation}\label{part1-deg}
\deg \zeta_{g_s,\mathbf{0}}(t)=-1+(-1)^n\prod_{i=1}^n(d_i-1).
\end{equation}

Now let us compute $\deg \zeta_{\hat g,\mathbf{p}}(t)$. 
Since $\hat g\equiv \hat\pi^*g$ is Newton non-degenerate in the coordinates $\mathbf{x}_{\mathbf{p}}$, Varchenko's formula \eqref{varchenkoformula} shows that
\begin{align}\label{psf-dfzhdp}
\deg \zeta_{\hat g,\mathbf{p}}(t)
= \sum_{I\in\mathcal{I}}\sum_{\mathbf{v}\in P^I} 
-d(\mathbf{v};\hat g^I)\, \chi(\mathbf{v})
\end{align}
where $\mathcal{I}$ is the collection of all non-empty subsets $I\subseteq\{1,\ldots,n\}$ such that $\hat g^I\not\equiv 0$ (in particular, observe that since all the monomials of $\hat g$ contain a power of $x_{\mathbf{p},1}$, all the subsets $I\in \mathcal{I}$ contain the number~$1$) and $P^I$ is the set of primitive positive weight vectors in $W^{+I}$ which correspond to the maximal dimensional faces of $\Gamma(\hat g^I)$, that is, the set of vectors $\mathbf{v}={^t(v_1,\ldots,v_n)}\in W$ such that
\begin{equation*}
v_i>0\mbox{ for }i\in I,\ v_i=0\mbox{ for }i\notin I 
\mbox{ and }\dim \Delta(\mathbf{v};\hat g^I)=|I|-1.
\end{equation*}
Here, $\Delta(\mathbf{v};\hat g^I)$ is the face of $\Gamma(\hat g;\mathbf{x}_\mathbf{p})^I:=\Gamma(\hat g;\mathbf{x}_\mathbf{p})\cap \mathbb{R}^I=\Gamma(\hat g^I;\mathbf{x}_\mathbf{p}^I)$ associated to $\mathbf{v}$, where $\Gamma(\hat g;\mathbf{x}_\mathbf{p})$ is the Newton boundary of $\hat g$ with respect to the coordinates $\mathbf{x}_\mathbf{p}$ and where $\mathbf{x}_\mathbf{p}^I$ denote the coordinates on $\mathbb{C}^I$ induced by $\mathbf{x}_\mathbf{p}$.
We recall that the number $\chi(\mathbf{v})$ is defined by 
\begin{equation*}
\chi(\mathbf{v}):=(-1)^{|I|-1}\, |I|!\, \mbox{Vol}_{|I|}\big(\mbox{Cone}(\Delta(\mathbf{v};\hat g^I),\mathbf{0}^I)\big) / d(\mathbf{v};\hat g^I).
\end{equation*} 
Writing down \eqref{psf-dfzhdp} explicitly gives
\begin{align*}
\deg \zeta_{\hat g,\mathbf{p}}(t)
& = \sum_{I\in\mathcal{I}} -(-1)^{|I|-1}\, |I|! \sum_{\mathbf{v}\in P^I}  
\mbox{Vol}_{|I|}\big(\mbox{Cone}(\Delta(\mathbf{v};\hat g^I),\mathbf{0}^I)\big)\\
& = \sum_{I\in\mathcal{I}} -(-1)^{|I|-1}\, |I|! \,  
\mbox{Vol}_{|I|}\big(\Gamma_{\!-}(\hat g)^I\big)
\end{align*}
where $\Gamma_{\!-}(\hat g)$ is the cone over $\Gamma(\hat g):=\Gamma(\hat g;\mathbf{x}_\mathbf{p})$ with the origin as vertex and $\Gamma_{\!-}(\hat g)^I:=\Gamma_{\!-}(\hat g)\cap \mathbb{R}^I$.
Now, by \eqref{rajout742022} and \cite[Assertion 19]{O2}, for any subset $I\subseteq \mathcal{I}$, we have
\begin{equation*}
|I|!\, \mbox{Vol}_{|I|}(\Gamma_{\!-}(\hat g)^I)=
(d+mw_1)\, |I'|!\ \mbox{Vol}_{|I'|}
\big(\Gamma_{\!-}(\tilde{f}'_\sigma(\mathbf{p}'+\Phi(\mathbf{x}'_\mathbf{p})))^{I'}\big),
\end{equation*}
where $I':=I\setminus \{1\}$.
(We recall that for any $I\in\mathcal{I}$, we have $1\in I$. If $I=\{1\}$, then the right-hand side of the above equality is $1$ by definition.)
Writing $\Gamma_{\!-}(\tilde{f}'_\sigma)$ instead of $\Gamma_{\!-}(\tilde{f}'_\sigma(\mathbf{p}'+\Phi(\mathbf{x}'_\mathbf{p})))$, it follows that
\begin{align*}
\deg \zeta_{\hat g,\mathbf{p}}(t)
& = (d+mw_1) \sum_{I\in\mathcal{I}} -(-1)^{|I|-1}\, |I'|!\ \mbox{Vol}_{|I'|} \big(\Gamma_{\!-}(\tilde{f}'_\sigma)^{I'}\big)\\
& = (-1)^n\, (d+mw_1) \sum_{I\in\mathcal{I}} (-1)^{(n-1)-|I'|}\, |I'|!\ \mbox{Vol}_{|I'|} \big(\Gamma_{\!-}(\tilde{f}'_\sigma)^{I'}\big)
\end{align*}
where the sum $\sum_{I\in\mathcal{I}}(\ldots)$ is (by definition) the Newton number of the convenient function $\tilde{f}'_\sigma(\mathbf{p}'+\Phi(\mathbf{x}'_\mathbf{p}))$ with respect to the coordinates $\mathbf{x}'_\mathbf{p}$. Now, since this function is Newton non-degenerate, a theorem of Kouchnirenko \cite[Th\'eor\`eme 1.10]{K} says that its Newton number coincides with its Milnor number at $\mathbf{p}'$ (which is nothing but the Milnor number $\mu_\mathbf{p}$ of the hypersurface singularity $(E(\mathbf{w}),\mathbf{p})$). Thus,
\begin{align*}
\deg \zeta_{\hat g,\mathbf{p}}(t) = (-1)^n (d+mw_1) \, \mu_\mathbf{p}.
\end{align*}

Altogether, we get that the Milnor number $\mu_{\mathbf{0}}(g)$ of $g$ at $\mathbf{0}$ is equal to
\begin{align*}
(-1)^n+(-1)^n  \bigg(-1+(-1)^n\prod_{i=1}^n(d_i-1)\bigg) + 
 \sum_{\mathbf{p}\in\mbox{\tiny Sing}(E(\mathbf{w}))}\big((-1)^n((-1)^n (d+mw_1) \, \mu_\mathbf{p})-d\mu_{\mathbf{p}}\big),
\end{align*}
that is, 
\begin{align*}
\mu_{\mathbf{0}}(g)=\prod_{i=1}^n (d_i-1) + 
m\, w_1\, \mu^{\mbox{\tiny tot}}.
\end{align*}
This completes the proof of Theorem \ref{mt1}.
\end{proof}

\begin{example}\label{example33}
Take $n=3$ and suppose that 
\begin{align*}
f(z_1,z_2,z_3)=7z_3^6+5z_1z_3^4+12z_2z_3^4-8z_1^2z_3^2+6z_2^2z_3^2+4z_1^3+z_2^3.
\end{align*}
Clearly, $f$ is a convenient weighted homogeneous polynomial function of weighted degree $d=6$ with respect to the weight vector $\mathbf{w}={}^t(2,2,1)$. We have $f(\mathbf{0})=0$ and the singular locus of $V=f^{-1}(0)$ is $1$-dimensional. Also, we easily check that for any proper subset $I\subsetneq\{1,\ldots,n\}$, the function $f^I$ is Newton non-degenerate. The integers $d_i$  that appear in Theorem \ref{mt1} are given by $d_1=3$, $d_2=3$ and $d_3=6$. The dual Newton diagram $\Gamma^*(f)$ has a single positive vertex, namely $\mathbf{w}={}^t(2,2,1)$. Consider the regular simplicial cone subdivision $\Sigma^*$ of $W^+_{\mathbb{R}}$ whose vertices are $\mathbf{w}={}^t(2,2,1)$, $\mathbf{v}={}^t(1,1,1)$ and the canonical weight vectors $\mathbf{e}_i$ ($1\leq i\leq 3$). Clearly, it is admissible with respect to $\Gamma^*(f)$. Let $\hat\pi\colon X\to\mathbb{C}^3$ be the associated toric modification. It has five toric coordinate charts, which correspond to the cones $\sigma:=C(\mathbf{w},\mathbf{v},\mathbf{e}_1)$, $\sigma':=C(\mathbf{w},\mathbf{v},\mathbf{e}_2)$, $\sigma'':=C(\mathbf{w},\mathbf{e}_1,\mathbf{e}_2)$, $\sigma''':=C(\mathbf{v},\mathbf{e}_1,\mathbf{e}_3)$ and $\sigma'''':=C(\mathbf{v},\mathbf{e}_2,\mathbf{e}_3)$ (see Figure \ref{figure}). 
\begin{figure}[t]
\includegraphics[scale=2]{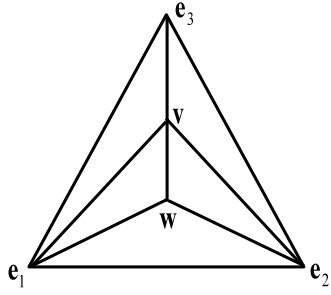}
\caption{The subdivision $\Sigma^*$ of Example \ref{example33}}
\label{figure}
\end{figure}
In the chart $\mathbb{C}^3_\sigma$, with coordinates $\mathbf{y}_\sigma=(y_{\sigma,1},y_{\sigma,2},y_{\sigma,3})$, we have the birational map
\begin{equation*}
\hat\pi_\sigma\colon\mathbb{C}^3_\sigma\to\mathbb{C}^3,\ (\mathbf{y}_\sigma)\mapsto (y_{\sigma,1}^2y_{\sigma,2}y_{\sigma,3},y_{\sigma,1}^2y_{\sigma,2},y_{\sigma,1}y_{\sigma,2})
\end{equation*}
(see \eqref{def-bmpsh}). Now consider, for instance, the function 
\begin{equation*}
g_2(\mathbf{z})=f(\mathbf{z})+z_2^{d_2+m}=f(\mathbf{z})+z_2^{3+m} \quad (m\geq 1).
\end{equation*}
The pull-back of $f$ and $g_2$ by $\hat \pi$ are given by
\begin{align*}
& \hat f(\mathbf{y}_\sigma):=\hat\pi^*f(\mathbf{y}_\sigma)=y_{\sigma,1}^6 y_{\sigma,2}^3\cdot \tilde f_\sigma(\mathbf{y}_\sigma)\quad\mbox{and}\\
& \hat g_2(\mathbf{y}_\sigma):=\hat\pi^*g_2(\mathbf{y}_\sigma)=y_{\sigma,1}^6 y_{\sigma,2}^3 \cdot (\tilde f_\sigma(\mathbf{y}_\sigma)+y_{\sigma,1}^{2m} y_{\sigma,2}^m)
\end{align*}
respectively, where
\begin{align}\label{ex-definingequation}
\tilde f_\sigma(\mathbf{y}_\sigma) = \tilde f'_\sigma(\mathbf{y}'_\sigma) =
7y_{\sigma,2}^3+5y_{\sigma,2}^2y_{\sigma,3}+12y_{\sigma,2}^2-8y_{\sigma,2}y_{\sigma,3}^2+6y_{\sigma,2}+4y_{\sigma,3}^3+1
\end{align}
(see \eqref{defeqforexcdiv1} and \eqref{defeqforexcdiv2}).
The exceptional divisor $E(\mathbf{w})$ has a unique singularity at $\mathbf{p}=(0,-1/2,-1/4)$. It is a singularity of type $\mathbf{A}_2$, so that its Milnor number $\mu_{\mathbf{p}}$ equals $2$. In the admissible coordinates $\mathbf{x}_{\mathbf{p}}=(x_{\mathbf{p},1},x_{\mathbf{p},2},x_{\mathbf{p},3})$ defined by 
\begin{align*}
& y_{\sigma,1}=x_{\mathbf{p},1},\\ 
& y_{\sigma,2}=(x_{\mathbf{p},2}-(1/2))+2x_{\mathbf{p},3},\\
& y_{\sigma,3}=x_{\mathbf{p},3}-(1/4),
\end{align*}
the Newton principal part of the defining polynomial \eqref{ex-definingequation} of the hypersurface $E(\mathbf{w})$ of $\hat E(\mathbf{w})$ is given by
\begin{align*}
(1/4) x_{\mathbf{p},2}^2+64 x_{\mathbf{p},3}^3,
\end{align*}
which is clearly convenient and Newton non-degenerate. So, all the conditions for applying Theorem \ref{mt1} are fulfilled, and we get
\begin{align}\label{rajout842022}
\mu_{\mathbf{0}}(g_2)=20+4m.
\end{align}
\end{example}

\begin{remark}
We can check the expression \eqref{rajout842022} of the Milnor number by computing the degree of the zeta-function $\zeta_{g_2,\mathbf{0}}(t)$. The latter is given by the formulas \eqref{okaformula11}--\eqref{okaformula13}. More precisely, the zeta-function $\zeta_{\hat g_2,\mathbf{0}}(t)$ that appears in \eqref{okaformula12} \textemdash\ and that corresponds in our case to the singular point $\mathbf{p}=(0,-1/2,-1/4)$ \textemdash\ can be calculated using Varchenko's formula \eqref{varchenkoformula}. Explicitly, the Newton principal part of $\hat g_2$ is written as
\begin{equation*}
(-1/2)^3 x_{\mathbf{p},1}^6 \big((1/4)x_{\mathbf{p},2}^2+64x_{\mathbf{p},3}^3+(-1/2)^mx_{\mathbf{p},1}^{2m}\big),
\end{equation*}
and $\zeta_{\hat g_2,\mathbf{0}}(t)$ is given by
\begin{equation*}
\zeta_{\hat g_2,\mathbf{0}}(t)=\left\{
\begin{aligned}
& (1-t^{6m+18})^{-1}(1-t^{2m+6}) && \mbox{if}  && \gcd(m,3)=1,\\
& (1-t^{2m+6})^{-2} && \mbox{if}  && \gcd(m,3)=3.
\end{aligned}
\right.
\end{equation*}
The zeta-function $\zeta_{(g_2)_s,\mathbf{0}}(t)$ that appears in \eqref{okaformula13} is also computed using Varchenko's formula and is given by
\begin{equation*}
\zeta_{(g_2)_s,\mathbf{0}}(t)=(1-t^3)(1-t^6)^{-4},
\end{equation*}
so that the zeta-function $\zeta'(t)$ of \eqref{okaformula13} is written as
\begin{equation*}
\zeta'(t)=\zeta_{(g_2)_s,\mathbf{0}}(t)\cdot(1-t^6)^2=(1-t^3)(1-t^6)^{-2}.
\end{equation*}
Thus, altogether, the zeta-function $\zeta_{g_2,\mathbf{0}}(t)$ (given by \eqref{okaformula12}) is written as
\begin{equation*}
\zeta_{g_2,\mathbf{0}}(t)=\left\{
\begin{aligned}
& (1-t^{6m+18})^{-1}(1-t^{2m+6})(1-t^3)(1-t^6)^{-2} && \mbox{if}  && \gcd(m,3)=1,\\
& (1-t^{2m+6})^{-2}(1-t^3)(1-t^6)^{-2} && \mbox{if}  && \gcd(m,3)=3.
\end{aligned}
\right.
\end{equation*}
Though the expression for zeta-function differs according to the cases $\gcd(m,3)=1$ or $\gcd(m,3)=3$, its degree is the same in both cases, and therefore, by \eqref{lprrmnedzf}, we get 
\begin{equation*}
\mu_{\mathbf{0}}(g_2)=-\deg \zeta_{g_2,\mathbf{0}}(t) + 1 = 20+4m,
\end{equation*}
which is the assertion of Theorem \ref{mt1} in the situation of Example \ref{example33}.
\end{remark}

\section{On the structure of the $\mu$-constant and $\mu^*$-constant strata}\label{sect-smcmscs}
As mentioned in the introduction, in order to construct our $\mu^*$-Zariski pair of surfaces, we need to show that if $f$ and $f'$ are two polynomial functions vanishing at the origin and lying in the same path-connected component  of the $\mu^*$-constant stratum (as germs of analytic functions at the origin), then $f$ and $f'$  can be connected by a ``piecewise complex-analytic path'' (see Definition \ref{def-pcap} and the comment after it).
The purpose of this section is to establish this property. We shall also prove a similar property for the $\mu$-constant stratum. The main result of the section is stated in Theorem \ref{mt3}. Certainly, this theorem may be useful in many other situations in singularity theory.

The definitions of piecewise complex-analytic paths in the $\mu$-constant and $\mu^*$-constant strata are based on the properties of certain semi-algebraic sets $W(n,m,\mu)$ and $W^*(n,m,\mu^*)$ that we are going to introduce in \S\S\ref{charact-mcs} and \ref{charact-mscs}. The definition of piecewise complex-analytic paths itself and the statement of Theorem \ref{mt3} are given in \S\ref{subsect-pccapcap}.

Let $\mathcal{O}_n\equiv \mathbb{C}\{z_1,\ldots,z_n\}$ ($n\geq 1$) be the ring of convergent power series at the origin, and let $\mathfrak{M}:=\{f\in \mathcal{O}_n\mid f(\mathbf{0})=0\}$ be its maximal ideal. It is well known that for a given $f\in\mathfrak{M}$, if $H$ is a generic linear $i$-plane of $\mathbb{C}^n$ ($1\leq i\leq n$), then the Milnor number 
\begin{equation*}
\mu_{\mathbf{0}}^{(i)}(f):=\mu_{\mathbf{0}}(f\vert_H)
\end{equation*}
 of the restriction of $f$ to $H$ depends only on $i$ and $f$. 
(Note that for a non-generic linear $i$-plane $L$, we have $\mu_{\mathbf{0}}^{(i)}(f)\leq\mu_{\mathbf{0}}(f\vert_L)$.)
In \cite{Teissier2}, Teissier introduced the $\mu^*$-sequence of $f$ at~$\mathbf{0}$ as the $n$-tuple
\begin{equation*}
\mu^*_{\mathbf{0}}(f):=(\mu_{\mathbf{0}}^{(n)}(f),\mu_{\mathbf{0}}^{(n-1)}(f),\ldots,\mu_{\mathbf{0}}^{(1)}(f)).
\end{equation*}
Note that $\mu_{\mathbf{0}}^{(n)}(f)$  is nothing but the Milnor number $\mu_{\mathbf{0}}(f)$ of $f$ at $\mathbf{0}$ while $\mu_{\mathbf{0}}^{(1)}(f)$ is the multiplicity $\mbox{mult}_{\mathbf{0}}(f)$ of $f$ at $\mathbf{0}$ minus 1.

By definition, if $\mu$ is a non-negative integer, then the $\mu$-constant stratum $\mathfrak{M}(\mu)$ of $\mathfrak{M}$ consists of all function-germs $f\in \mathfrak{M}$ such that the Milnor number $\mu_{\mathbf{0}}(f)$ of $f$ at $\mathbf{0}$ is equal to $\mu$.
Similarly, if $\mu^{(n)},\ldots,\mu^{(1)}$ are non-negative integers and if $\mu^*$ denotes the $n$-tuple $(\mu^{(n)},\mu^{(n-1)},\ldots,\mu^{(1)})$, then the $\mu^*$-constant stratum $\mathfrak{M}(\mu^*)$ of $\mathfrak{M}$ consists of all function-germs $f\in \mathfrak{M}$ such that the $\mu^*$-sequence $\mu^*_{\mathbf{0}}(f)$ of $f$ at $\mathbf{0}$ is given by the $n$-tuple $\mu^*$.

\subsection{The semi-algebraic set $W(n,m,\mu)$}\label{charact-mcs}
Now, let $m$ be a positive integer and let 
\begin{align*}
P(n,m):=\{f\in\mathbb{C}[z_1,\ldots,z_n]\mid \deg f\leq m\}.
\end{align*} 
It is well known that $P(n,m)$ is a vector space of dimension $N:=\binom{n+m}{n}$, putting an order on the basis' monomials $1=M_1<M_2<\cdots<M_N$. Hereafter, we identify $P(n,m)$ with $\mathbb{C}^N$.
Let 
\begin{equation*}
\pi_m\colon \mathcal{O}_n\to P(n,m)
\end{equation*}
be the natural projection obtained by deleting all terms of degree greater than $m$ (if any). For any $f\in\mathcal{O}_n$, we write $J(f)$ for the Jacobian ideal of $f$ (i.e., the ideal of $\mathcal{O}_n$ generated by the partial derivatives of $f$) and we put 
\begin{equation*}
J_m(f):=\pi_m(J(f)).
\end{equation*} 
 An  element of $J_m(f)$ is written as $\pi_m(\sum_{i=1}^n h_i\, (\partial f/\partial z_i))$, $h_i\in\mathcal{O}_n$, and we easily check that $J_m(f)$ is the subspace of $P(n,m)\equiv\mathbb{C}^N$ generated by the following set of $nN$ vectors:
\begin{equation*}
B(f):=\bigg\{\pi_m\bigg( M_j\frac{\partial f}{\partial z_i}\bigg) \mid
1\leq i\leq n,\ 1\leq j\leq N\bigg\}.
\end{equation*} 
Hereafter, we identify $B(f)$ with an $N\times nN$ matrix. 
Then for any $\mu\in\mathbb{N}$, we consider the algebraic variety\footnote{By identifying $f=\sum_{j=1}^N a_j M_j$ with its coordinates $(a_1,\ldots,a_N)\in\mathbb{C}^N$ with respect to the basis $\{M_1,\ldots,M_N\}$, we immediately see that $V(n,m,\mu)$ is an algebraic variety.}
\begin{align*}
V(n,m,\mu):=\{f \in P(n,m) \mid \mbox{all $(N-\mu)\times(N-\mu)$ 
minors of $B(f)$ vanish}\},
\end{align*}
and we define
\begin{equation}\label{defsasmcs}
W(n,m,\mu):=V(n,m,\mu-1)\setminus V(n,m,\mu).
\end{equation}  
Clearly, $W(n,m,\mu)$ is a semi-algebraic set, and for any $f\in P(n,m)$, the following equivalences hold true:
\begin{equation}\label{equivalences}
f\in W(n,m,\mu)\Leftrightarrow \mbox{rk}\, B(f)=N-\mu \Leftrightarrow \dim P(m,n)/J_m(f)=\mu.
\end{equation} 

The next proposition is a crucial step in the proof of Theorem \ref{mt3}, the main result of this section. To state it, let us consider the set
\begin{equation*}
W(n,\mu):=\{f\in\mathcal{O}_n\mid \dim \mathcal{O}_n/J(f)=\mu\}.
\end{equation*}

\begin{remark}
The $\mu$-constant stratum $\mathfrak{M}(\mu)$ of $\mathfrak{M}$ is nothing but $\mathfrak{M}\cap W(n,\mu)$.
\end{remark}

\begin{proposition}\label{mucs-lemmaequiv} 
Let $f\in\mathfrak{M}$ (i.e., $f(\mathbf{0})=0$).
\begin{enumerate}
\item
If $f\in W(n,\mu)$, then $\pi_m(f)\in W(n,m,\mu)$ for any $m\geq\mu$.
\item
If $f\in W(n,m,\mu)$ for some $m\geq\mu$, then $f\in W(n,\mu)$.
\end{enumerate}
\end{proposition}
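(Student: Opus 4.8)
The plan is to exploit the comparison between the finite-dimensional truncation $J_m(f)\subseteq P(n,m)$ and the full Jacobian ideal $J(f)\subseteq\mathcal O_n$, using that $\mu=\dim_{\mathbb C}\mathcal O_n/J(f)$ is finite precisely when $f$ has an isolated singularity, in which case $\mathfrak M^\mu\subseteq J(f)$ (by the Nullstellensatz, since $V(J(f))=\{\mathbf 0\}$ forces $\mathfrak M^k\subseteq J(f)$ for some $k$, and then one checks $k$ can be taken $\le\mu$ from the length of the chain $\mathcal O_n\supsetneq J(f)+\mathfrak M\supsetneq\cdots$). The key observation is that for $m\ge\mu$, the natural map $\mathcal O_n/(J(f)+\mathfrak M^{m+1})\to P(n,m)/J_m(f)$ is an isomorphism when $f$ has isolated singularity with Milnor number $\mu$, and moreover $\mathcal O_n/(J(f)+\mathfrak M^{m+1})\cong\mathcal O_n/J(f)$ because $\mathfrak M^{m+1}\subseteq\mathfrak M^{\mu+1}\subseteq\mathfrak M\cdot J(f)\subseteq J(f)$ (using $\mathfrak M^\mu\subseteq J(f)$). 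So both statements reduce to controlling this comparison.

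For part (1): assume $f\in W(n,\mu)$, so $\dim\mathcal O_n/J(f)=\mu$ and $f$ has an isolated singularity. First establish $\mathfrak M^\mu\subseteq J(f)$ as above. Then for $m\ge\mu$ we have $\mathfrak M^{m+1}\subseteq J(f)$, hence $J(f)+\mathfrak M^{m+1}=J(f)$ and $\dim\mathcal O_n/(J(f)+\mathfrak M^{m+1})=\mu$. Next I would show $P(n,m)/J_m(f)\cong\mathcal O_n/(J(f)+\mathfrak M^{m+1})$: the projection $\pi_m$ identifies $P(n,m)$ with $\mathcal O_n/\mathfrak M^{m+1}$ as vector spaces, and under this identification $J_m(f)=\pi_m(J(f))$ corresponds to the image of $J(f)$ in $\mathcal O_n/\mathfrak M^{m+1}$, i.e. to $(J(f)+\mathfrak M^{m+1})/\mathfrak M^{m+1}$. (One subtlety: $\pi_m(J(f))$ is the image of the ideal of $\mathcal O_n$, not of $\pi_m(f)$'s ideal in $P(n,m)$; but since every partial $\partial f/\partial z_i$ and every $M_j\partial f/\partial z_i$ lies in $\mathcal O_n$, $\pi_m$ of these generators generates exactly the image of $J(f)$ modulo $\mathfrak M^{m+1}$.) Hence $\dim P(n,m)/J_m(f)=\mu$, and by \eqref{equivalences} this says $\pi_m(f)\in W(n,m,\mu)$ — here one must note that $\pi_m(f)\in P(n,m)$ and that $J_m(\pi_m(f))=J_m(f)$, i.e. truncating $f$ first does not change the truncated Jacobian ideal, which follows because $\partial(\pi_m f)/\partial z_i$ and $\partial f/\partial z_i$ differ by terms of degree $\ge m$, and multiplying by $M_j$ and applying $\pi_m$ kills the discrepancy.

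For part (2): assume $f\in\mathfrak M$ and $\pi_m(f)\in W(n,m,\mu)$ for some $m\ge\mu$, i.e. $\dim P(n,m)/J_m(f)=\mu$. The goal is $\dim\mathcal O_n/J(f)=\mu$. First, $J(f)+\mathfrak M^{m+1}\supseteq J_m(f)$-preimage, so $\dim\mathcal O_n/(J(f)+\mathfrak M^{m+1})=\mu$ by the identification above; in particular $\mathcal O_n/(J(f)+\mathfrak M^{m+1})$ has a $\mathbb C$-basis of $\mu$ monomials of degree $<m+1$, hence $\mathfrak M^{m}\subseteq J(f)+\mathfrak M^{m+1}$ once $\mu\le m$ (the length of a monomial filtration forces it to stabilize by step $\mu\le m$). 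Then Nakayama's lemma applied to the finitely generated $\mathcal O_n$-module $\mathfrak M^m/(\mathfrak M^m\cap J(f))$ — whose quotient by $\mathfrak M$ vanishes — gives $\mathfrak M^m\subseteq J(f)$, hence $\mathfrak M^{m+1}\subseteq J(f)$ and $J(f)+\mathfrak M^{m+1}=J(f)$, so $\dim\mathcal O_n/J(f)=\mu$. This also shows $f$ has an isolated singularity, so $f\in W(n,\mu)$.

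\medskip

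The main obstacle I anticipate is the careful bookkeeping around three distinct ideals that all look similar: $J(f)\subseteq\mathcal O_n$, its truncation $\pi_m(J(f))=J_m(f)\subseteq P(n,m)$, and the Jacobian ideal $J(\pi_m f)$ of the truncated polynomial inside $P(n,m)$. The statement uses $J_m(f)$, and one must verify that the rank condition defining $W(n,m,\mu)$, which a priori concerns the matrix $B(\pi_m f)$, coincides with the condition $\dim P(n,m)/J_m(f)=\mu$ — this is exactly \eqref{equivalences} applied correctly, but one has to be sure $B(f)$ and $B(\pi_m f)$ have the same row span in $P(n,m)$, which is the degree-truncation argument mentioned above. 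The Nakayama step in part (2) is standard but requires first extracting $\mathfrak M^m\subseteq J(f)+\mathfrak M^{m+1}$ from the dimension count, which uses $m\ge\mu$ in an essential way — this is where the hypothesis on $m$ is consumed.
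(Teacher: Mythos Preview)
Your approach to part~(1) coincides with the paper's: both prove $\dim_{\mathbb C}P(n,m)/J_m(f)=\mu$ via the chain $\mathcal O_n/J(f)=\mathcal O_n/(J(f)+\mathfrak M^{m+1})\cong P(n,m)/J_m(f)$, using $\mathfrak M^\mu\subseteq J(f)$. You go further than the paper in flagging the distinction between $J_m(f)=\pi_m(J(f))$ and $J_m(\pi_m f)$---the paper silently passes from the former to ``$\pi_m(f)\in W(n,m,\mu)$'' via \eqref{equivalences}, but \eqref{equivalences} applied to $\pi_m(f)$ concerns $J_m(\pi_m f)$. Your proposed resolution, however, does not close this gap: the discrepancy $\partial f/\partial z_i-\partial(\pi_m f)/\partial z_i$ lies in $\mathfrak M^m$, not $\mathfrak M^{m+1}$, so for $M_j=1$ the projection $\pi_m$ retains its degree-$m$ part rather than killing it. Indeed the literal statement fails at the boundary: with $n=2$, $f=z_1z_2$, $\mu=1$, $m=1$ one has $\pi_1(f)=0\notin W(2,1,1)$. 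The gap is harmless in the paper's applications (Theorem~\ref{mt3} takes $m\ge\deg f$, so $\pi_m(f)=f$), and is repaired in general by requiring $m\ge\mu+1$, so that finite determinacy gives $\mu_{\mathbf 0}(\pi_m f)=\mu$ and hence $\mathfrak M^m\subseteq J(\pi_m f)$ as well.

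For part~(2) your route genuinely differs from the paper's. Both extract from the $\mathfrak M$-adic filtration that $\mathfrak M^{r_0}\subseteq\mathfrak M^{r_0+1}+J(f)$ for some $r_0\le\mu$. The paper then observes that $\mathfrak M^k+J(f)$ is constant for $k\ge r_0$, argues by contradiction that the singularity must be isolated (else $\dim\mathcal O_n/(J(f)+\mathfrak M^k)\to\infty$), and only then invokes the Nullstellensatz to obtain $\mathfrak M^\ell\subseteq J(f)$. You instead pass directly from $\mathfrak M^m\subseteq J(f)+\mathfrak M^{m+1}$ to $\mathfrak M^m\subseteq J(f)$ via Nakayama's lemma on the finitely generated module $\mathfrak M^m/(\mathfrak M^m\cap J(f))$, bypassing both the contradiction argument and the Nullstellensatz. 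Your route is shorter and more algebraic; the paper's is more elementary. Both are correct.
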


\begin{proof}
Item (1) is easy. Take $f\in W(n,\mu)$. Then $\dim \mathcal{O}_n/J(f)=\mu$, and it is well known that this implies $\mathfrak{M}^{\mu}\subseteq J(f)$.
Thus for any $m\geq \mu$, we have $\mathfrak{M}^{m+1}\subseteq\mathfrak{M}^{\mu}\subseteq J(f)$, and hence,
\begin{equation*}
\mathcal{O}_n/J(f)=\mathcal{O}_n/(J(f)+\mathfrak{M}^{m+1})=P(n,m)/J_m(f).
\end{equation*} 
It follows that $\dim P(n,m)/J_m(f)=\mu$, and we conclude with \eqref{equivalences}.

Let us now prove item (2). Writing $\bar{\mathfrak{M}}$ for the canonical image of $\mathfrak{M}$ in 
\begin{equation*}
\mathcal{O}_n/(J(f)+\mathfrak{M}^{m+1})=P(n,m)/J_m(f),
\end{equation*} 
we look at the canonical decomposition
\begin{equation*}
P(n,m)/J_m(f)=\mathcal{O}_n/(J(f)+\mathfrak{M}^{m+1})=\bigoplus_{r=0}^m \bar{\mathfrak{M}}^r/\bar{\mathfrak{M}}^{r+1}.
\end{equation*} 
Clearly, there exists $0\leq r_0\leq\mu$ such that $\bar{\mathfrak{M}}^{r_0}/\bar{\mathfrak{M}}^{r_0+1}=0$, as otherwise $\dim P(n,m)/J_m(f)>\mu$, which is a contradiction. In particular, this implies
\begin{equation*}
\mathfrak{M}^{r_0}\subseteq (J(f)+\mathfrak{M}^{m+1})+\mathfrak{M}^{r_0+1}.
\end{equation*}
Now, since $0\leq r_0\leq \mu\leq m$, we also have $\mathfrak{M}^{m+1}\subseteq \mathfrak{M}^{r_0+1}$, a new inclusion which, combined with the above one, shows that
\begin{equation}\label{nai}
{\mathfrak{M}}^{r_0}\subseteq \mathfrak{M}^{r_0+1}+J(f).
\end{equation}
Clearly, \eqref{nai} implies that for any $k\geq r_0$ the following equality holds:
\begin{equation}\label{le}
{\mathfrak{M}}^{k}+J(f)={\mathfrak{M}}^{r_0}+J(f).
\end{equation}
This equality, in turn, shows that $f$ has an isolated singularity at $\mathbf{0}$
(i.e., there exists $\mu'>0$ such that $f\in W(n,\mu')$). Indeed, if not, then
\begin{equation}\label{efc}
\dim \mathcal{O}_n/(J(f)+\mathfrak{M}^{k})\to\infty \quad\mbox{as } k\to\infty.
\end{equation}
However, by \eqref{le}, for any $k\geq r_0$, we have
\begin{equation*}
\mathcal{O}_n/(J(f)+\mathfrak{M}^{k})\overset{\mbox{\tiny \eqref{le}}}{=}\mathcal{O}_n/(J(f)+\mathfrak{M}^{r_0})\overset{\mbox{\tiny \eqref{le}}}{=}\mathcal{O}_n/(J(f)+\mathfrak{M}^{m+1})=P(n,m)/J_m(f),
\end{equation*}
and therefore
\begin{equation*}
\dim \mathcal{O}_n/(J(f)+\mathfrak{M}^{k})=\dim P(n,m)/J_m(f)=\mu,
\end{equation*}
which contradicts \eqref{efc}.
Now, since $f$ has an isolated singularity at $\mathbf{0}$, it follows from the Hilbert Nullstelensatz (see, e.g., \cite[Chapter VII, \S 3, Theorem 14]{ZS}) that there exists $\ell>0$ such that $\mathfrak{M}^\ell\subseteq J(f)$. Clearly, we can assume $\ell\geq m$. Then,
\begin{align*}
\mu'=\dim \mathcal{O}_n/J(f) & = \dim \mathcal{O}_n/(\mathfrak{M}^{\ell}+J(f))\\
& \overset{\mbox{\tiny \eqref{le}}}{=} \dim \mathcal{O}_n/(\mathfrak{M}^{m+1}+J(f))
=\dim P(n,m)/J_m(f)=\mu.
\end{align*} 
In other words, $f\in W(n,\mu)$.
\end{proof}

The following corollary is an immediate consequence of Proposition \ref{mucs-lemmaequiv}.

\begin{corollary}
For any $m'\ge m\ge \mu$, the following inclusions are homotopy equivalences:
\begin{align*}
W(n,m,\mu)\cap\mathfrak{M}\hookrightarrow W(n,m',\mu)\cap\mathfrak{M}\hookrightarrow W(n,\mu)\cap\mathfrak{M}.
\end{align*} 
\end{corollary}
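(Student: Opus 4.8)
The plan is to deduce the corollary from Proposition~\ref{mucs-lemmaequiv} by showing that the projection maps between these spaces are, up to the obvious inclusions, inverse deformation retractions. First I would note that the inclusion $W(n,m,\mu)\cap\mathfrak{M}\hookrightarrow W(n,m',\mu)\cap\mathfrak{M}$ makes sense: if $f\in P(n,m)$ has $\dim P(n,m)/J_m(f)=\mu$, then by Proposition~\ref{mucs-lemmaequiv}(2) we have $f\in W(n,\mu)$, and then by Proposition~\ref{mucs-lemmaequiv}(1) we get $\pi_{m'}(f)=f\in W(n,m',\mu)$ (here $\pi_{m'}(f)=f$ since $\deg f\le m\le m'$). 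Similarly $W(n,m,\mu)\cap\mathfrak{M}\subseteq W(n,\mu)\cap\mathfrak{M}$ and $W(n,m',\mu)\cap\mathfrak{M}\subseteq W(n,\mu)\cap\mathfrak{M}$.

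Next I would construct the retraction in the opposite direction. Given $f\in W(n,\mu)\cap\mathfrak{M}$ (a convergent power series, possibly of infinite order), Proposition~\ref{mucs-lemmaequiv}(1) gives $\pi_m(f)\in W(n,m,\mu)\cap\mathfrak{M}$ for every $m\ge\mu$; thus $\pi_m$ restricts to a well-defined map $r_m\colon W(n,\mu)\cap\mathfrak{M}\to W(n,m,\mu)\cap\mathfrak{M}$, and $r_m$ is the identity on $W(n,m,\mu)\cap\mathfrak{M}$ (polynomials of degree $\le m$ are fixed by $\pi_m$). The same formula gives $r_{m,m'}\colon W(n,m',\mu)\cap\mathfrak{M}\to W(n,m,\mu)\cap\mathfrak{M}$ by $f\mapsto\pi_m(f)$, again a retraction onto the subspace. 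It remains to show each of these retractions is homotopic to the identity through maps into the appropriate space. For this I would use the linear homotopy $H_t(f):=\pi_m(f)+t\,(f-\pi_m(f))$ for $t\in[0,1]$, which interpolates between $r_m(f)$ at $t=0$ and $f$ at $t=1$. The point is that $f-\pi_m(f)\in\mathfrak{M}^{m+1}$, so $H_t(f)\equiv f \bmod \mathfrak{M}^{m+1}$, and hence $J(H_t(f))+\mathfrak{M}^{m+1}=J(f)+\mathfrak{M}^{m+1}$ for all $t$; combined with the fact (used in the proof of Proposition~\ref{mucs-lemmaequiv}, via $\mathfrak{M}^\mu\subseteq J(f)$ and $m\ge\mu$) that $\dim\mathcal{O}_n/(J(f)+\mathfrak{M}^{m+1})=\dim\mathcal{O}_n/J(f)=\mu$ for every member of the family, we get $H_t(f)\in W(n,\mu)\cap\mathfrak{M}$ for all $t$. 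Continuity of $(t,f)\mapsto H_t(f)$ is clear since it is affine in $t$ and $f$; one should also check that $H_t$ stays in $\mathfrak{M}$, which is immediate as $\mathfrak{M}$ is a linear subspace and both $f$ and $\pi_m(f)$ vanish at $\mathbf{0}$.

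Carrying this out for the two inclusions separately: for $W(n,m,\mu)\cap\mathfrak{M}\hookrightarrow W(n,m',\mu)\cap\mathfrak{M}$, the retraction is $f\mapsto\pi_m(f)$ with the analogous homotopy $\pi_m(f)+t(f-\pi_m(f))$ (here $f$ has degree $\le m'$, so $f-\pi_m(f)$ is a polynomial in $\mathfrak{M}^{m+1}\cap P(n,m')$, and the same modular argument applies since $m\ge\mu$ forces $\mathfrak{M}^\mu\subseteq J(f)$); for $W(n,m,\mu)\cap\mathfrak{M}\hookrightarrow W(n,\mu)\cap\mathfrak{M}$ and $W(n,m',\mu)\cap\mathfrak{M}\hookrightarrow W(n,\mu)\cap\mathfrak{M}$, use $r_m$ and $r_{m'}$ respectively with the homotopies described above. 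Since inclusion followed by retraction is the identity on the nose, and retraction followed by inclusion is homotopic to the identity, each inclusion is a homotopy equivalence. The main obstacle, and the only substantive point, is verifying that the linear homotopy $H_t$ never leaves $W(n,\mu)$ — i.e.\ that the Milnor number is genuinely constant along the segment, not merely at the endpoints; this is exactly where one needs $m\ge\mu$ (so that $\mathfrak{M}^m\subseteq\mathfrak{M}^\mu\subseteq J(f)$ and the $(m+1)$-truncation loses no information about $\mathcal{O}_n/J(f)$), and it is essentially the content already extracted in the proof of Proposition~\ref{mucs-lemmaequiv}. Everything else is formal manipulation of retractions and the observation that the relevant maps are affine, hence continuous, in all variables.
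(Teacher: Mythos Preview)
The paper offers no proof beyond calling the corollary ``an immediate consequence of Proposition~\ref{mucs-lemmaequiv}'', so your deformation-retraction argument via $\pi_m$ and the linear homotopy $H_t(f)=\pi_m(f)+t(f-\pi_m(f))$ is exactly the natural way to unpack that remark, and it is the right strategy.

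There is, however, one genuine slip in the key step. From $H_t(f)\equiv f\pmod{\mathfrak{M}^{m+1}}$ you cannot conclude $J(H_t(f))+\mathfrak{M}^{m+1}=J(f)+\mathfrak{M}^{m+1}$: differentiating an element of $\mathfrak{M}^{m+1}$ drops the order by one, so you only get $\partial_i H_t(f)-\partial_i f\in\mathfrak{M}^{m}$ and hence
\[
J(H_t(f))+\mathfrak{M}^{m}=J(f)+\mathfrak{M}^{m}=J(f)
\]
(the last equality because $\mathfrak{M}^{m}\subseteq\mathfrak{M}^{\mu}\subseteq J(f)$). This gives $\dim\mathcal{O}_n/(J(H_t(f))+\mathfrak{M}^{m})=\mu$, which is not yet $H_t(f)\in W(n,\mu)$. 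To close the gap you need one more step: from $\mathfrak{M}^{\mu}\subseteq J(f)=J(H_t(f))+\mathfrak{M}^{m}$ and $m\ge\mu$ one gets $\mathfrak{M}^{m}\subseteq J(H_t(f))+\mathfrak{M}\cdot\mathfrak{M}^{m}$, and then Nakayama (applied to the image of $\mathfrak{M}^{m}$ in $\mathcal{O}_n/J(H_t(f))$) yields $\mathfrak{M}^{m}\subseteq J(H_t(f))$, whence $J(H_t(f))=J(f)$ and $H_t(f)\in W(n,\mu)$. Equivalently, you can rerun the proof of Proposition~\ref{mucs-lemmaequiv}(2) with $m$ replaced by $m-1$. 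Either way, the missing ingredient is precisely this Nakayama step; once inserted, your argument goes through. (A secondary point you leave implicit is the choice of topology on $\mathfrak{M}\subseteq\mathcal{O}_n$ needed to speak of continuity for the right-hand inclusion; the paper is equally silent on this, and for the finite-dimensional inclusion $W(n,m,\mu)\hookrightarrow W(n,m',\mu)$ no such issue arises.)
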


\subsection{The semi-algebraic set $W^*(n,m,\mu^*_n)$}\label{charact-mscs}

Let $\mu^{(n)},\ldots,\mu^{(1)}$ be non-negative integers, and let $\mu^*_{n}:=(\mu^{(n)},\mu^{(n-1)},\ldots,\mu^{(1)})$.  Hereafter, we are going to define a semi-algebraic set $W^*(n,m,\mu^*_n)\subseteq P(n,m)$ by induction on $n$. For that purpose, we consider the natural projection 
\begin{align*}
\mbox{pr}_1\colon P(n,m) \times\mathbb{C}^{n-1} \to P(n,m)
\end{align*} 
 onto the first factor and we introduce the map
\begin{align*}
\phi_n\colon P(n,m) \times\mathbb{C}^{n-1} \to P(n-1,m)
\end{align*} 
which associates to any $(f,b_1,\ldots,b_{n-1})\in P(n,m) \times\mathbb{C}^{n-1}$ the polynomial function defined by
\begin{align*}
(z_1,\ldots, z_{n-1})\mapsto f(z_1,\ldots, z_{n-1},b_1 z_1+\cdots+b_{n-1} z_{n-1}).
\end{align*} 

The induction starts at $n=1$ in which case we set
\begin{equation}\label{defsasmcs-2}
W^*(1,m,\mu^*_{1}):=W(1,m,\mu^{(1)}),
\end{equation} 
where $W(1,m,\mu^{(1)})$ is the semi-algebraic set defined in \eqref{defsasmcs}.
Now, suppose that for any $n\geq 2$ we have defined a semi-algebraic subset $W^*(n-1,m,\mu^*_{n-1})\subseteq P(n-1,m)$, and let us define a new semi-algebraic subset $W^*(n,m,\mu^*_{n})\subseteq P(n,m)$ by the relation
\begin{equation}\label{defsasmcs-3}
W^*(n,m,\mu^*_{n}):=A(n,m,\mu^*_{n})\setminus B(n,m,\mu^*_{n}), 
\end{equation} 
where
\begin{align*}
& A(n,m,\mu^*_{n}):=\mbox{pr}_1\Big( \phi_n^{-1}\big(  W^*(n-1,m,\mu^*_{n-1}) \big) \cap \big( W(n,m,\mu^{(n)})\times \mathbb{C}^{n-1} \big) \Big),\\
& B(n,m,\mu^*_{n}):=\mbox{pr}_1\bigg( \phi_n^{-1}\bigg( \bigcup_{s<\mu^{(n-1)}} W(n-1,m,s) \bigg) \bigg).
\end{align*} 
Again, $W(n,m,\mu^{(n)})$ and $W(n-1,m,s)$ are the semi-algebraic sets defined in \eqref{defsasmcs}. Note that $f\in A(n,m,\mu^*_{n})$ means $f\in W(n,m,\mu^{(n)})$ and there exists $(b_1,\ldots,b_{n-1})\in \mathbb{C}^{n-1}$ such that if 
\begin{align*}
H:=\{\mathbf{z}=(z_1,\ldots,z_n)\in\mathbb{C}^{n}\mid z_n=b_1 z_1+\cdots+b_{n-1} z_{n-1}\}
\end{align*} 
 denotes the corresponding hyperplane, then
\begin{align*}
\phi_n(f,b_1,\ldots,b_{n-1})=f\vert_H\in W^*(n-1,m,\mu^*_{n-1}).
\end{align*} 
 Saying $f\notin B(n,m,\mu^*_{n})$ means that the above hyperplane $H$ is generic. That $W^*(n,m,\mu^*_{n})$ is a semi-algebraic set follows from the Tarski--Seidenberg theorem (see, e.g., \cite{Coste}).

The proposition below is a consequence of Proposition \ref{mucs-lemmaequiv}. It also plays a crucial role in the proof of Theorem \ref{mt3}. Let 
\begin{equation*}
W^*(n,\mu^*_{n}):=\{f\in\mathcal{O}_n\mid \mu^*_{\mathbf{0}}(f-f(\mathbf{0}))=\mu^*_{n}\}.
\end{equation*}
Note that for $n=1$, we have $W^*(1,\mu^*_{1})=W(1,\mu^{(1)})$.

\begin{remark}
The $\mu^*$-constant stratum $\mathfrak{M}(\mu^*)$ of $\mathfrak{M}$ is nothing but $\mathfrak{M}\cap W^*(n,\mu^*_{n})$.
\end{remark}

\begin{proposition}\label{mucs-lemmaequiv-2}
Put $\mu^{\mbox{\tiny \emph{max}}}:=\mbox{\emph{max}}\, \{\mu^{(n)},\ldots,\mu^{(1)}\}$ and pick $f\in\mathfrak{M}$ (i.e., $f(\mathbf{0})=0$). 
\begin{enumerate}
\item
If $f\in W^*(n,\mu^*_n)$, then $\pi_m(f)\in W^*(n,m,\mu^*)$ for any 
$m\geq\mu^{\mbox{\tiny \emph{max}}}$.
\item
If $f\in W^*(n,m,\mu^*_n)$ for some $m\geq\mu^{\mbox{\tiny \emph{max}}}$, then $f\in W^*(n,\mu^*_n)$.
\end{enumerate}
\end{proposition}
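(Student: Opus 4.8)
The plan is to prove Proposition \ref{mucs-lemmaequiv-2} by induction on $n$, leveraging Proposition \ref{mucs-lemmaequiv} as the base mechanism and using the fact that a \emph{generic} hyperplane section realizes the $\mu^*$-sequence. The base case $n=1$ is immediate since $W^*(1,\mu^*_1)=W(1,\mu^{(1)})$ and $W^*(1,m,\mu^*_1)=W(1,m,\mu^{(1)})$ by \eqref{defsasmcs-2}, so both items follow directly from Proposition \ref{mucs-lemmaequiv} applied with $\mu=\mu^{(1)}$ (note $\mu^{\mbox{\tiny max}}\geq\mu^{(1)}$).

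For the inductive step, assume the statement holds for $n-1$. The key observation is that truncation commutes with hyperplane restriction in the following sense: for a linear hyperplane $H=\{z_n=b_1z_1+\cdots+b_{n-1}z_{n-1}\}$ and the substitution map, one has $\pi_m^{(n-1)}(f\vert_H)=\big(\pi_m^{(n)}(f)\big)\vert_H$, because substituting a linear form into a polynomial does not raise degree, so truncating before or after restriction gives the same result. To prove item (1), suppose $f\in W^*(n,\mu^*_n)$. Then $\mu_{\mathbf 0}(f)=\mu^{(n)}$, so by Proposition \ref{mucs-lemmaequiv}(1) we get $\pi_m(f)\in W(n,m,\mu^{(n)})$ for $m\geq\mu^{(n)}$. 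Moreover there is a generic hyperplane $H$ (in the Zariski-open set of hyperplanes realizing all the generic sectional Milnor numbers) with $f\vert_H\in W^*(n-1,\mu^*_{n-1})$; by the induction hypothesis $\pi_m(f\vert_H)\in W^*(n-1,m,\mu^*_{n-1})$ for $m\geq\mu^{\mbox{\tiny max}}$ (the max over the $(n-1)$-tuple is no larger), and by the commutation remark this equals $(\pi_m f)\vert_H$. This shows $\pi_m(f)\in A(n,m,\mu^*_n)$. It remains to check $\pi_m(f)\notin B(n,m,\mu^*_n)$: for a \emph{generic} hyperplane $H'$, the section $f\vert_{H'}$ has $\mu_{\mathbf 0}(f\vert_{H'})=\mu^{(n-1)}$, hence $\pi_m(f\vert_{H'})=(\pi_m f)\vert_{H'}\in W(n-1,m,\mu^{(n-1)})$ and not in any $W(n-1,m,s)$ with $s<\mu^{(n-1)}$, so $(b_1,\dots,b_{n-1})$ does not lie in the set cut out by $B$; since $B(n,m,\mu^*_n)$ is the projection over \emph{all} $(b_i)$ that land in $\bigcup_{s<\mu^{(n-1)}}W(n-1,m,s)$, and a generic choice avoids this, we conclude $\pi_m(f)\notin B(n,m,\mu^*_n)$.

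For item (2), suppose $f\in W^*(n,m,\mu^*_n)$ for some $m\geq\mu^{\mbox{\tiny max}}$. From $f\in A(n,m,\mu^*_n)$ we get $f\in W(n,m,\mu^{(n)})$, hence by Proposition \ref{mucs-lemmaequiv}(2) (using $m\geq\mu^{(n)}$) we obtain $\mu_{\mathbf 0}(f)=\mu^{(n)}$. Also there is $(b_1,\dots,b_{n-1})$ with $f\vert_H\in W^*(n-1,m,\mu^*_{n-1})$; by the induction hypothesis $f\vert_H\in W^*(n-1,\mu^*_{n-1})$, so the $\mu^*$-sequence of this particular section is $\mu^*_{n-1}$. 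To conclude that $\mu^*_{\mathbf 0}(f)=\mu^*_n$ I would argue that $f\notin B(n,m,\mu^*_n)$ forces $H$ to be generic: if $H$ were non-generic, then $f\vert_H$ would be a non-generic section; but the generic sectional Milnor number $\mu^{(n-1)}_{\mathbf 0}(f)$ satisfies $\mu^{(n-1)}_{\mathbf 0}(f)\leq\mu_{\mathbf 0}(f\vert_H)=\mu^{(n-1)}$, and one must rule out strict inequality. Here I would invoke the semicontinuity/genericity characterization: the condition $f\notin B(n,m,\mu^*_n)$ says precisely that $(b_1,\dots,b_{n-1})$ avoids every hyperplane whose section has first Milnor number strictly below $\mu^{(n-1)}$; combined with $\mu_{\mathbf 0}(f\vert_H)=\mu^{(n-1)}$ (read off from $f\vert_H\in W^*(n-1,\mu^*_{n-1})$) and the fact that the generic value is the minimum, this yields $\mu^{(n-1)}_{\mathbf 0}(f)=\mu^{(n-1)}$ and, iterating down the tower, that $H$ realizes the whole generic $\mu^*$-sequence; hence $\mu^*_{\mathbf 0}(f)=(\mu_{\mathbf 0}(f),\mu^*_{\mathbf 0}(f\vert_H))=(\mu^{(n)},\mu^*_{n-1})=\mu^*_n$, i.e. $f\in W^*(n,\mu^*_n)$.

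The main obstacle I anticipate is precisely the genericity bookkeeping in the definitions of $A$ and $B$: one must be careful that ``$f\notin B$'' truly certifies that \emph{some} admissible $(b_1,\dots,b_{n-1})$ in the $A$-condition can be taken generic, rather than merely that a generic $(b_i)$ avoids $B$. The cleanest fix is to observe that the set of $(b_i)$ for which $f\vert_H\in W^*(n-1,m,\mu^*_{n-1})$ is constructible and, by the induction hypothesis together with the well-known fact that the $\mu^*$-sequence is realized on a Zariski-dense open set of hyperplanes, contains a Zariski-dense open subset; intersecting this with the complement of the set cut out by $B$ (also dense open) gives a genuine generic $(b_i)$ realizing everything simultaneously. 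This density argument, applied at each level of the induction, is what makes the two items match up; once it is in place the rest is the routine commutation of $\pi_m$ with linear restriction and repeated application of Proposition \ref{mucs-lemmaequiv}.
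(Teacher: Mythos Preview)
Your overall strategy coincides with the paper's: induction on $n$, with the base case and top Milnor number handled by Proposition~\ref{mucs-lemmaequiv}, and the inductive step run through the commutation $\pi_m(f\vert_H)=(\pi_m f)\vert_H$ for linear $H$. The paper's proof is in fact more terse than yours --- it simply reads membership in $W^*(n,m,\mu^*_n)$ as ``$f\in W(n,m,\mu^{(n)})$ and for $H$ generic $f\vert_H\in W^*(n-1,m,\mu^*_{n-1})$'' and does not separately verify the $A$ and $B$ clauses.

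There is, however, one concrete slip in your argument for item (1). You write that since a \emph{generic} $(b_1,\dots,b_{n-1})$ gives $(\pi_m f)\vert_{H'}\notin\bigcup_{s<\mu^{(n-1)}}W(n-1,m,s)$, it follows that $\pi_m(f)\notin B(n,m,\mu^*_n)$. But $B$ is defined as the $\mbox{pr}_1$-image of \emph{all} $(f,(b_i))$ landing in the bad set, so $\pi_m(f)\in B$ as soon as a \emph{single} hyperplane $H$ gives $(\pi_m f)\vert_H\in W(n-1,m,s)$ for some $s<\mu^{(n-1)}$. Checking only generic $H$ does not exclude this. The correct argument runs over \emph{every} $H$: since $\mu^{(n-1)}_{\mathbf 0}(f)=\mu^{(n-1)}$, one has $\mu_{\mathbf 0}(f\vert_H)\geq\mu^{(n-1)}$ for all linear $H$; if nevertheless $(\pi_m f)\vert_H=\pi_m(f\vert_H)\in W(n-1,m,s)$ with $s<\mu^{(n-1)}\leq m$, then Proposition~\ref{mucs-lemmaequiv}(2) gives $\mu_{\mathbf 0}(\pi_m(f\vert_H))=s$, and since $f\vert_H-\pi_m(f\vert_H)\in\mathfrak{M}^{m+1}\subseteq\mathfrak{M}^{s+2}$, finite determinacy (or the Nakayama-type argument underlying Proposition~\ref{mucs-lemmaequiv}) forces $\mu_{\mathbf 0}(f\vert_H)=s<\mu^{(n-1)}$, a contradiction. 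The paper does not spell this out either, but your explicit (incorrect) justification makes the gap visible. Your item (2) and your closing remarks about the genericity bookkeeping are in the right spirit and match what the paper leaves implicit.
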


\begin{proof}
Let us first show item (1). We argue by induction on $n$. By Proposition \ref{mucs-lemmaequiv}, if $f\in W^*(1,\mu^*_1)=W(1,\mu^{(1)})$, then $\pi_m(f)\in W(1,m,\mu^{(1)})=:W^*(1,m,\mu^*_1)$. For the inductive step, we assume that the following implication holds true:
\begin{equation*}
f\in W^*(n-1,\mu^*_{n-1})\Rightarrow \pi_m(f)\in W^*(n-1,m,\mu^*_{n-1})
\mbox{ for any } m\geq\mbox{max}\, \{\mu^{(n-1)},\ldots,\mu^{(1)}\}.
\end{equation*} 
Now take $f\in W^*(n,\mu^*_n)$. We want to show that $\pi_m(f)\in W(n,m,\mu^{(n)})$, and for $H$ generic, $\pi_m(f)\vert_H\in W^*(n-1,m,\mu^*_{n-1})$. We have:
\begin{equation*}
f\in W^*(n,\mu^*_n)\Rightarrow f\in W(n,\mu^{(n)})
\overset{\mbox{\tiny Prop.~\ref{mucs-lemmaequiv}}}{\Rightarrow}
\pi_m(f)\in W(n,m,\mu^{(n)}).
\end{equation*} 
Also,  for any $m\geq\mu^{\mbox{\tiny max}}$ we have:
\begin{align*}
f\in W^*(n,\mu^*_n) & \Rightarrow \mbox{ for any $H$ generic, } 
f\vert_H\in W^*(n-1,\mu^*_{n-1})\\
& \overset{\mbox{\tiny induction}}{\Rightarrow}
\pi_m(f\vert_H)=\pi_m(f)\vert_H\in W^*(n-1,m,\mu^*_{n-1}).
\end{align*} 

Let us now prove item (2). Again, we argue by induction on $n$. By Proposition \ref{mucs-lemmaequiv}, if $f\in W^*(1,m,\mu^*_1):=W(1,m,\mu^{(1)})$ for some $m\geq\mu^{(1)}$, then $f\in W(1,\mu^{(1)})=W^*(1,\mu^*_1)$. For the inductive step, we assume that the following implication holds true:
\begin{equation*}
f\in W^*(n-1,m,\mu^*_{n-1}) \mbox{ with } m\geq\mbox{max}\, \{\mu^{(n-1)},\ldots,\mu^{(1)}\}\Rightarrow  f\in W^*(n-1,\mu^*_{n-1}).
\end{equation*} 
Now take $f\in W^*(n,m,\mu^*_n)$ with $m\geq\mu^{\mbox{\tiny max}}$. Then, by definition, $f\in W(n,m,\mu^{(n)})$, and for $H$ generic, $f\vert_H\in W^*(n-1,m,\mu^*_{n-1})$. Thus, by the induction hypothesis, $f\vert_H\in W^*(n-1,\mu^*_{n-1})$. Altogether, $f\in W^*(n,\mu^*_{n})$.
\end{proof}

As an immediate corollary of Proposition \ref{mucs-lemmaequiv-2} we have the following statement.

\begin{corollary}
For any $m'\ge m\ge \mu$, the following inclusions are homotopy equivalences:
\begin{align*}
W^*(n,m,\mu^*_n)\cap\mathfrak{M}\hookrightarrow W^*(n,m',\mu^*_n)\cap\mathfrak{M}\hookrightarrow W^*(n,\mu^*_n)\cap\mathfrak{M}.
\end{align*} 
\end{corollary}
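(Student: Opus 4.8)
The statement is the final Corollary, asserting that for $m'\ge m\ge\mu$ the inclusions
\begin{equation*}
W^*(n,m,\mu^*_n)\cap\mathfrak{M}\hookrightarrow W^*(n,m',\mu^*_n)\cap\mathfrak{M}\hookrightarrow W^*(n,\mu^*_n)\cap\mathfrak{M}
\end{equation*}
are homotopy equivalences. (Here one should read $\mu$ as $\mu^{\mbox{\tiny max}}$, in line with Proposition~\ref{mucs-lemmaequiv-2}.) The natural approach is to exhibit an explicit deformation retraction realizing these inclusions, exactly as one does for the corresponding statement about the $\mu$-constant stratum. First I would dispose of the second inclusion on the right: although $W^*(n,\mu^*_n)\cap\mathfrak{M}$ lives in the infinite-dimensional space $\mathfrak{M}\subseteq\mathcal{O}_n$, Proposition~\ref{mucs-lemmaequiv-2} shows that the truncation map $\pi_m$ sends it \emph{into} $W^*(n,m,\mu^*_n)\cap\mathfrak{M}$ whenever $m\ge\mu^{\mbox{\tiny max}}$, and that $\pi_m$ restricted to $W^*(n,\mu^*_n)\cap\mathfrak{M}$ is a left inverse (up to the inclusion) of the inclusion $W^*(n,m,\mu^*_n)\cap\mathfrak{M}\hookrightarrow W^*(n,\mu^*_n)\cap\mathfrak{M}$. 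So the key point is to build a homotopy $H_t$ on $W^*(n,\mu^*_n)\cap\mathfrak{M}$ between the identity ($t=0$) and $\pi_m$ ($t=1$) that stays inside $W^*(n,\mu^*_n)\cap\mathfrak{M}$ for all $t$ and restricts to the identity on $W^*(n,m,\mu^*_n)\cap\mathfrak{M}$ throughout.

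\smallskip

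\textbf{Constructing the retraction.} The candidate homotopy is the obvious linear scaling of the higher-order tail: write $f=\pi_m(f)+f_{>m}$ with $f_{>m}\in\mathfrak{M}^{m+1}$, and set $H_t(f):=\pi_m(f)+(1-t)f_{>m}$ for $t\in[0,1]$. Clearly $H_0=\mathrm{id}$, $H_1=\pi_m$, $H_t$ fixes every $f$ with $f_{>m}=0$ (in particular everything in $W^*(n,m,\mu^*_n)\cap\mathfrak{M}$), and $H_t(f)\in\mathfrak{M}$ since it vanishes at the origin. The content is to verify $H_t(f)\in W^*(n,\mu^*_n)$ for all $t$. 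For this I would invoke the proof of Proposition~\ref{mucs-lemmaequiv}, item~(1), which shows that $f\in W(n,\mu)$ forces $\mathfrak{M}^\mu\subseteq J(f)$, hence $\mathfrak{M}^{m+1}\subseteq\mathfrak{M}^{\mu^{\mbox{\tiny max}}}\subseteq J(f)$; consequently the $\mu$-value is determined by the class of $f$ modulo $\mathfrak{M}^{m+1}$, and adding or scaling any element of $\mathfrak{M}^{m+1}$ does not change $\mathcal{O}_n/(J(f)+\mathfrak{M}^{m+1})=P(n,m)/J_m(f)$ — more care is needed because $J(f)$ itself changes when we perturb $f$, but since the perturbation lies in $\mathfrak{M}^{m+1}$ and $m\ge\mu^{\mbox{\tiny max}}$, one checks that $J_m(H_t(f))=J_m(\pi_m(f))=J_m(f)$ (the partial derivatives of a term of degree $\ge m+1$ have degree $\ge m$, and after multiplication by any $M_j$ and truncation at degree $m$ they contribute only through terms that $\pi_m$ already kills, except the degree-exactly-$m$ part of $\partial f_{>m}/\partial z_i$, which however is killed once we note such a term sits in $\mathfrak{M}^{m}$ and the relevant comparison is modulo the ideal generated — here one uses that for $m\ge\mu^{\mbox{\tiny max}}$, $\mathfrak{M}^m\subseteq \mathfrak{M}^{\mu^{\mbox{\tiny max}}}\subseteq J(\pi_m(f))$ already, because $\pi_m(f)\in W(n,m,\mu^{(n)})$ by Proposition~\ref{mucs-lemmaequiv-2}). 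This gives $\mu_{\mathbf 0}(H_t(f))=\mu^{(n)}$; the equality of the whole $\mu^*$-sequence then follows inductively, since a generic hyperplane section of $H_t(f)$ is $H_t$ applied (in one fewer variable, with the same $m$) to the corresponding section of $f$, and genericity of a hyperplane for $f$ versus for $\pi_m(f)$ agrees because, again, the semialgebraic conditions defining $B(n,m,\mu^*_n)$ depend only on $\pi_m(f)$.

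\smallskip

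\textbf{The middle inclusion.} For $W^*(n,m,\mu^*_n)\cap\mathfrak{M}\hookrightarrow W^*(n,m',\mu^*_n)\cap\mathfrak{M}$ with $m'\ge m$, the same machine applies with $\pi_m$ now regarded as a retraction of the finite-dimensional set $W^*(n,m',\mu^*_n)\cap\mathfrak{M}$ onto $W^*(n,m,\mu^*_n)\cap\mathfrak{M}$; one must first confirm this is well-defined, i.e. $\pi_m(W^*(n,m',\mu^*_n)\cap\mathfrak{M})\subseteq W^*(n,m,\mu^*_n)\cap\mathfrak{M}$, which follows by composing Proposition~\ref{mucs-lemmaequiv-2}(2) (to pass from $m'$-data to the germ) with Proposition~\ref{mucs-lemmaequiv-2}(1) (to pass back down to $m$-data, legitimate since $m\ge\mu^{\mbox{\tiny max}}$). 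The homotopy $H_t$ defined above, restricted to this finite-dimensional set, provides the deformation retraction, after checking as before that it preserves $W^*(n,m',\mu^*_n)$.

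\smallskip

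\textbf{Main obstacle.} The genuinely delicate point is verifying stability of the full $\mu^*$-sequence — not just $\mu_{\mathbf 0}$ — under the homotopy $H_t$, i.e. that $H_t(f)\in W^*(n,\mu^*_n)$ and not merely that its top Milnor number is right. This requires threading the inductive definition \eqref{defsasmcs-2}--\eqref{defsasmcs-3} of $W^*$ through the homotopy and confirming that ``$H$ generic for $f$'' can be taken simultaneously generic for all the $H_t(f)$ (indeed for $\pi_m(f)$), which is where one genuinely uses that all the relevant minors and their vanishing loci are insensitive to the $\mathfrak{M}^{m+1}$-tail once $m\ge\mu^{\mbox{\tiny max}}$. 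I expect this bookkeeping, rather than any single hard idea, to be the bulk of the work; everything else reduces cleanly to Propositions~\ref{mucs-lemmaequiv} and~\ref{mucs-lemmaequiv-2} together with the elementary structure of the linear homotopy $H_t$.
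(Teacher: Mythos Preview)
The paper gives no proof here, declaring the corollary an immediate consequence of Proposition~\ref{mucs-lemmaequiv-2}. Your approach---exhibiting $\pi_m$ as a retraction and the linear homotopy $H_t(f)=\pi_m(f)+(1-t)f_{>m}$ as the deformation---is exactly how one unpacks that ``immediate,'' and is surely what the authors intend. You have also correctly isolated the only substantive step, namely checking that $H_t(f)\in W^*(n,\mu^*_n)$ for every~$t$.

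One technical point deserves tightening: your argument for $J_m(H_t(f))=J_m(\pi_m(f))$ actually establishes only the inclusion $\subseteq$. The degree-$m$ contributions $\pi_m(\partial f_{>m}/\partial z_i)$ do lie in $J_m(\pi_m(f))$ (via $\mathfrak{M}^m\subseteq J(\pi_m(f))$, as you say), which shows that each generator of $J_m(H_t(f))$ lies in $J_m(\pi_m(f))$; but the reverse inclusion requires those same degree-$m$ elements to lie in $J_m(H_t(f))$, which is not yet known---this is the circularity hiding in your parenthetical. The cleanest way around it, which also dissolves the hyperplane-genericity bookkeeping you flag as the main obstacle, is finite determinacy: once Proposition~\ref{mucs-lemmaequiv-2} (parts~(1) then~(2)) gives $\pi_m(f)\in W^*(n,\mu^*_n)$, its Milnor number is $\mu^{(n)}$ and so $\pi_m(f)$ is right-determined to order at most $\mu^{(n)}+1$; since $H_t(f)$ and $\pi_m(f)$ share the same $m$-jet, they are right-equivalent, and the $\mu^*$-sequence---being a biholomorphic invariant of the germ (Teissier)---transfers at once. (Whether one ultimately needs $m\ge\mu^{\mbox{\tiny max}}+1$ rather than $m\ge\mu^{\mbox{\tiny max}}$ here is a boundary subtlety that already appears in Proposition~\ref{mucs-lemmaequiv}(1) itself.)
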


\subsection{Path-connected components and piecewise complex-analytic paths}\label{subsect-pccapcap}
Let $\mu\in\mathbb{N}$  and $\mu^*:=(\mu^{(n)},\mu^{(n-1)},\ldots,\mu^{(1)})\in \mathbb{N}^n$ ($n\geq 1$). Again, put $\mu^{\mbox{\tiny max}}:=\mbox{max}\, \{\mu^{(n)},\ldots,\mu^{(1)}\}$.

Piecewise complex-analytic paths in $\mathfrak{M}(\mu)$ are defined as follows.

\begin{definition}\label{def-pcap}
Let $f$ and $f'$ be two polynomial functions vanishing at the origin and lying in the same path-connected component of the $\mu$-constant stratum $\mathfrak{M}(\mu)$ of $\mathfrak{M}$ (as germs of analytic functions at the origin). We say that $f$ and $f'$ can be joined by a \emph{piecewise complex-analytic path} in $\mathfrak{M}(\mu)$ if there exists a continuous path
\begin{equation*}
\gamma\colon [0,1]\to W(n,m,\mu)\cap \mathfrak{M}
\end{equation*}
for some integer $m\geq\mu$ such that:
\begin{enumerate}
\item
$\gamma(0)=f$ and $\gamma(1)=f'$ (in particular, this implies $f,f'\in W(n,m,\mu)$);
\item
there is a partition $0=s_0<s_1<\cdots<s_{q_0}=1$ of $[0,1]$, and for each $0\leq q\leq q_0-1$, there exists an open subset $U_q\subseteq\mathbb{C}$ containing $[s_q,s_{q+1}]$ together with a complex-analytic map 
\begin{equation*}
\tilde\gamma_q\colon U_q\to W(n,m,\mu)\cap \mathfrak{M}
\end{equation*}
 such that $\tilde\gamma_q\vert_{[s_q,s_{q+1}]}=\gamma\vert_{[s_q,s_{q+1}]}$.
\end{enumerate}
\end{definition}

A path $\gamma$ as above is called a \emph{piecewise complex-analytic path} between $f$ and $f'$.
Note that if $f$ and $f'$ can be joined by a piecewise complex-analytic path 
\begin{equation*}
\gamma\colon [0,1]\to W(n,m,\mu)\cap \mathfrak{M},
\end{equation*}
 then, by Proposition \ref{mucs-lemmaequiv}, the Milnor number $\mu_{\mathbf{0}}(\gamma(s))$ is independent of $s\in [0,1]$. This justifies the terminology that $\gamma$ is a path \emph{in the $\mu$-constant statum $\mathfrak{M}(\mu)$}.

Piecewise complex-analytic paths in $\mathfrak{M}(\mu^*)$ are defined similarly, replacing $W(n,m,\mu)$ by $W^*(n,m,\mu^*)$ and changing the inequality $m\geq\mu$ into $m\geq\mu^{\mbox{\tiny max}}$ in Definition \ref{def-pcap}.
In this case, if $f$ and $f'$ can be joined by a piecewise complex-analytic path 
\begin{equation*}
\gamma\colon [0,1]\to W^*(n,m,\mu^*)\cap \mathfrak{M},
\end{equation*}
 then, by Proposition \ref{mucs-lemmaequiv-2}, the $\mu^*$-sequence of $\gamma(s)$ is independent of $s\in [0,1]$.

The next proposition is also an important step in the proof of Theorem \ref{mt3}.

\begin{proposition}\label{radpcap}
If $f$ and $f'$ are in the same path-component of $\mathfrak{M}(\mu)$ and if there exist an integer $m\geq\mu$ and a continuous map 
\begin{equation*}
\varrho\colon [0,1]\to W(n,m,\mu)\cap \mathfrak{M}
\end{equation*}
 with $\varrho(0)=f$ and $\varrho(1)=f'$, then there also exists a continuous map 
\begin{equation*}
\gamma\colon [0,1]\to W(n,m,\mu)\cap \mathfrak{M}
\end{equation*}
 satisfying the conditions (1) and (2) of Definition \ref{def-pcap}.
\end{proposition}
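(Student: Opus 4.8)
The plan is to use the fact that $W:=W(n,m,\mu)\cap\mathfrak{M}$ is a complex \emph{algebraic} set and to replace the given continuous path $\varrho$ by a finite concatenation of holomorphic arcs lying entirely in $W$. First I would note that $W$ is a quasi-affine complex algebraic variety: in view of \eqref{defsasmcs} it equals $Z\setminus Z'$, where $Z:=V(n,m,\mu-1)\cap\mathfrak{M}$ and $Z':=V(n,m,\mu)\cap\mathfrak{M}$ are affine varieties (after the identification $P(n,m)\equiv\mathbb{C}^N$, the set $\mathfrak{M}$ is the coordinate hyperplane $\{a_1=0\}$ of polynomials with vanishing constant term), so $W$ is a locally closed algebraic subset of $\mathbb{C}^N$. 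Since $\varrho$ is a continuous path in $W$ from $f$ to $f'$, both functions lie in one and the same connected component $W_0$ of $W$ for the Euclidean topology.

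The main step is to pass from the topological path $\varrho$ to a \emph{chain of algebraic curves}. Here I would invoke the classical fact that the Euclidean and the Zariski connected components of a complex algebraic variety coincide (an irreducible complex variety being Euclidean-connected), so that $W_0$ is a finite union $Y_1\cup\cdots\cup Y_r$ of irreducible quasi-affine varieties whose incidence graph is connected. Choosing a chain $Y_{i_0},\dots,Y_{i_k}$ with $f\in Y_{i_0}$, $f'\in Y_{i_k}$ and $Y_{i_{j-1}}\cap Y_{i_j}\neq\emptyset$, and picking points $p_j\in Y_{i_{j-1}}\cap Y_{i_j}\subseteq W$, it suffices to join $f$ to $p_1$ inside $Y_{i_0}$, then $p_j$ to $p_{j+1}$ inside $Y_{i_j}$, and finally $p_k$ to $f'$ inside $Y_{i_k}$. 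Inside an irreducible quasi-affine variety $Y$ of dimension $\geq 1$, any two points lie on an irreducible curve $C\subseteq Y$; this follows from iterating Bertini's irreducibility theorem on the projective closure $\bar Y$, and since the two endpoints belong to $Y$ the curve cannot sit inside $\bar Y\setminus Y$, so after discarding that part one still obtains an irreducible quasi-affine curve $C\subseteq Y\subseteq W$ through the two points. (If $\dim Y=0$ the two points coincide and there is nothing to do.)

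It then remains to connect two points on an irreducible quasi-affine complex curve $C$ by finitely many holomorphic arcs lying in $C$. For this I would pass to the normalization $\nu\colon\widehat{C}\to C$, which is a connected Riemann surface; choosing preimages of the two points and a path joining them in $\widehat{C}$, I would cover this path by finitely many holomorphic coordinate charts and, inside each chart, replace the corresponding sub-path by a line segment, which is the restriction to an open set $U\subseteq\mathbb{C}$ of an affine-linear, hence entire, map. Composing each segment with the chart embedding into $\widehat{C}$, with $\nu$ (a morphism, hence holomorphic) and with the closed embedding $C\hookrightarrow\mathbb{C}^N$, one obtains finitely many complex-analytic arcs $U\to W$. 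Concatenating all the arcs obtained along the whole chain of curves and reparametrising the interval, one produces a continuous map $\gamma\colon[0,1]\to W(n,m,\mu)\cap\mathfrak{M}$, a partition $0=s_0<s_1<\cdots<s_{q_0}=1$, and complex-analytic maps $\tilde\gamma_q\colon U_q\to W(n,m,\mu)\cap\mathfrak{M}$ on open neighbourhoods $U_q$ of $[s_q,s_{q+1}]$ in $\mathbb{C}$ with $\tilde\gamma_q|_{[s_q,s_{q+1}]}=\gamma|_{[s_q,s_{q+1}]}$; by construction $\gamma(0)=f$ and $\gamma(1)=f'$, so conditions (1) and (2) of Definition \ref{def-pcap} hold.

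The part I expect to be the main obstacle is the passage from the continuous path to the chain of algebraic curves: one must make sure that the irreducible curves joining consecutive points can genuinely be chosen inside $W$ rather than only inside its Zariski closure, and the low-dimensional edge cases in the Bertini argument (for instance when $\bar Y$ is a plane or a cone over a line) must be treated separately. By contrast, the reduction to connected components and the final step producing holomorphic arcs on a Riemann surface are essentially routine once the algebraicity of $W$ has been used.
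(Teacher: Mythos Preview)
Your argument is correct and reaches the same endpoint as the paper --- reducing to a curve and then passing to its normalization --- but the route is genuinely different. The paper works \emph{locally along the given path} $\varrho$: it covers $\mathrm{im}(\varrho)$ by pieces each contained in an irreducible subvariety $V_x$ of $W$, applies Noether normalization $\pi_{x,y}\colon V_x\cap O_y\to U\subseteq\mathbb{C}^k$, and for each subinterval takes the preimage of a complex line through the images of the endpoints to produce a $1$-dimensional variety, then normalizes and lifts. Your approach is \emph{global}: you discard $\varrho$ entirely (retaining only that $f,f'$ lie in one Euclidean component of $W$), invoke the coincidence of Euclidean and Zariski components to chain irreducible pieces, and use a Bertini-type argument to thread an irreducible curve through each consecutive pair of points before normalizing. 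Your version is conceptually cleaner and makes explicit that only the connectedness datum is used, not the particular path; the paper's version stays closer to $\varrho$ and avoids invoking Bertini, trading it for the more elementary Noether normalization plus ``preimage of a line'' trick. The obstacle you flag --- ensuring the Bertini curve lives in $W$ rather than only in its closure --- is real but harmless: since the endpoints lie in $Y\subseteq W$, the irreducible curve $C\subseteq\bar Y$ cannot be contained in $\bar Y\setminus Y$, so $C\cap Y$ is $C$ minus finitely many points, and its normalization remains a connected Riemann surface.
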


A similar statement also holds true if we replace $\mathfrak{M}(\mu)$ and $W(n,m,\mu)$ by $\mathfrak{M}(\mu^*)$ and $W^*(n,m,\mu^*)$ and if we change the inequality $m\geq\mu$ into $m\geq\mu^{\mbox{\tiny max}}$ both in Proposition \ref{radpcap} and Definition \ref{def-pcap}. The proof is similar to that of Proposition \ref{radpcap}.

\begin{proof}[Proof of Proposition \ref{radpcap}]
Clearly, the assertion is true if the semi-algebraic set 
\begin{equation*}
W(n,m,\mu)\cap \mathfrak{M}
\end{equation*} 
is smooth. If it is singular, then we can reduce the proof to the smooth case by the following argument. First, observe that each point $x$ of the image $\mbox{im}(\varrho)$ has an open neighbourhood $U_x\subseteq P(n,m)\equiv\mathbb{C}^N$ such that the intersection $\mbox{im}(\varrho)\cap U_x$ is contained in an irreducible $k$-dimensional algebraic subvariety $V_x$ of $W(n,m,\mu)\cap \mathfrak{M}$ (for some integer $k$). By the Noether normalization theorem, for each point $y$ of such a variety $V_x$, there is an open neighbourhood $O_y\subseteq \mathbb{C}^N$ and a finite branched covering 
\begin{equation*}
\pi_{x,y}\colon V_x\cap O_y\to U\subseteq\mathbb{C}^k,
\end{equation*} 
where $U$ is an open disc of $\mathbb{C}^k$. Using the compactness of $\mbox{im}(\varrho)$, we choose a sufficiently fine partition $0=s_0<s_1<\cdots<s_{q_0}=1$ of $[0,1]$ so that for each $q$ there exist $x,y$ with 
\begin{equation*}
\varrho([s_q,s_{q+1}])\subseteq V_{x,y}:=V_x\cap O_y.
\end{equation*} 
 Let $\varrho_q$ be the restriction of $\varrho$ to $[s_q,s_{q+1}]$, and 
let $L\subseteq U$ be (the trace on $U$ of) a complex line through $\pi_{x,y}\circ\varrho_q(s_q)$ and $\pi_{x,y}\circ\varrho_q(s_{q+1})$.
The inverse image $\pi_{x,y}^{-1}(L)$ of $L$ by $\pi_{x,y}$ is an algebraic variety of complex dimension $1$, and we easily show that $\varrho_q$ is homotopic to a path contained in $\pi_{x,y}^{-1}(L)$ by a homotopy leaving the ends $\varrho(s_q)$ and $\varrho(s_{q+1})$ fixed. We still denote by $\varrho_q$ the path of $\pi_{x,y}^{-1}(L)$ obtained in this way, and
we consider a normalization 
\begin{equation*}
\tau\colon N(\pi_{x,y}^{-1}(L))\to \pi_{x,y}^{-1}(L).
\end{equation*} 
Then $\varrho_q$ can be lifted to a path $\varsigma_q$ in $N(\pi_{x,y}^{-1}(L))$, and since $N(\pi_{x,y}^{-1}(L))$ is smooth and the problem is solved in this case, we can find an open subset $U_q\subseteq\mathbb{C}$ containing $[s_q,s_{q+1}]$ together with a complex-analytic map $\tilde\varsigma_q\colon U_q\to N(\pi_{x,y}^{-1}(L))$ such that
\begin{equation*}
\tilde\varsigma_q\vert_{\,[s_q,s_{q+1}]}=\varsigma_q.
\end{equation*} 
The desired complex-analytic map $\tilde\gamma_q$ is given by the composite $\tilde\gamma_q:=\tau\circ\tilde\varsigma_q$ while $\gamma$ is the continuous path defined on each $[s_q,s_{q+1}]$ by the restriction $\tilde\gamma_q\vert_{\, [s_q,s_{q+1}]}$.
\end{proof}

We can now state the main result of this section.

\begin{theorem}\label{mt3}
Let $\mu\in\mathbb{N}$  and $\mu^*:=(\mu^{(n)},\mu^{(n-1)},\ldots,\mu^{(1)})\in \mathbb{N}^n$ ($n\geq 1$), and let $f$ and $f'$ be polynomial functions on $\mathbb{C}^n$ 
such that $f(\mathbf{0})=f'(\mathbf{0})=0$. 
\begin{enumerate}
\item
If $f$ and $f'$ (as germs in~$\mathfrak{M}$) are in the same path-connected component of the $\mu$-constant stratum $\mathfrak{M}(\mu)$, then they can be joined by a piecewise complex-analytic path 
\begin{equation*}
\gamma\colon [0,1]\to W(n,m,\mu)\cap\mathfrak{M}
\end{equation*}
for any integer $m\geq\mbox{\emph{max}}\{\deg f, \deg f',\mu\}$.
\item
Similarly, if $f$ and $f'$ are in the same path-connected component of the $\mu^*$-constant stratum $\mathfrak{M}(\mu^*)$, then they can be joined by a piecewise complex-analytic path 
\begin{equation*}
\gamma\colon [0,1]\to W^*(n,m,\mu^*)\cap\mathfrak{M}
\end{equation*}
for any integer $m\geq\mbox{\emph{max}}\{\deg f, \deg f',\mu^{(n)},\ldots,\mu^{(1)}\}$.
\end{enumerate}
\end{theorem}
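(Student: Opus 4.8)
The plan is to reduce both items to the finite-dimensional models $W(n,m,\mu)$ and $W^*(n,m,\mu^*)$ introduced above, and then to invoke Proposition \ref{radpcap} (and its $\mu^*$-counterpart). I will spell out item (1); item (2) is obtained by the very same argument, replacing $W(n,m,\mu)$, $W(n,\mu)$, Proposition \ref{mucs-lemmaequiv} and its Corollary throughout by $W^*(n,m,\mu^*)$, $W^*(n,\mu^*)$, Proposition \ref{mucs-lemmaequiv-2} and its Corollary, noting that $\max\{\deg f,\deg f',\mu^{(n)},\ldots,\mu^{(1)}\}=\max\{\deg f,\deg f',\mu^{\mbox{\tiny max}}\}$, and using the $\mu^*$-version of Proposition \ref{radpcap} recorded just after its statement.

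So fix an integer $m\geq\max\{\deg f,\deg f',\mu\}$. The first step is to check that $f$ and $f'$ themselves lie in $W(n,m,\mu)\cap\mathfrak{M}$. Since $f,f'\in\mathfrak{M}(\mu)=W(n,\mu)\cap\mathfrak{M}$ and $m\geq\mu$, Proposition \ref{mucs-lemmaequiv}(1) gives $\pi_m(f),\pi_m(f')\in W(n,m,\mu)$; but $\deg f,\deg f'\leq m$, so $\pi_m(f)=f$ and $\pi_m(f')=f'$, and of course $f(\mathbf{0})=f'(\mathbf{0})=0$. The second step is to transport the hypothesis from $W(n,\mu)\cap\mathfrak{M}$ down to $W(n,m,\mu)\cap\mathfrak{M}$: by the Corollary to Proposition \ref{mucs-lemmaequiv} the inclusion $\iota\colon W(n,m,\mu)\cap\mathfrak{M}\hookrightarrow W(n,\mu)\cap\mathfrak{M}$ is a homotopy equivalence, hence induces a bijection on sets of path-connected components; since $\iota(f)=f$ and $\iota(f')=f'$ lie in a single component of the target and $\iota_*$ is injective on $\pi_0$, the germs $f$ and $f'$ lie in a single component of $W(n,m,\mu)\cap\mathfrak{M}$. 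In particular there is a continuous map
\begin{equation*}
\varrho\colon[0,1]\to W(n,m,\mu)\cap\mathfrak{M},\qquad\varrho(0)=f,\quad\varrho(1)=f'.
\end{equation*}

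The third step is simply to apply Proposition \ref{radpcap} to $\varrho$: it produces a continuous map $\gamma\colon[0,1]\to W(n,m,\mu)\cap\mathfrak{M}$ with $\gamma(0)=f$, $\gamma(1)=f'$ and satisfying condition (2) of Definition \ref{def-pcap}, i.e.\ precisely a piecewise complex-analytic path between $f$ and $f'$ in $\mathfrak{M}(\mu)$; the constancy of $\mu_{\mathbf{0}}(\gamma(s))$ along $\gamma$ is the remark recorded right after Definition \ref{def-pcap}. Since every ingredient has already been established, I do not expect a genuine obstacle here: the only delicate point is the passage between path-connected components under the truncation $\pi_m$, which is exactly what the homotopy equivalences of the two Corollaries (to Propositions \ref{mucs-lemmaequiv} and \ref{mucs-lemmaequiv-2}) are designed to handle. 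Everything else is a formal assembly of Propositions \ref{mucs-lemmaequiv}, \ref{mucs-lemmaequiv-2}, \ref{radpcap} and their corollaries.
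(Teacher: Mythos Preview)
Your proof is correct and follows essentially the same three-step strategy as the paper: locate $f,f'$ in $W(n,m,\mu)\cap\mathfrak{M}$, produce a continuous path there, then invoke Proposition~\ref{radpcap}. The only organizational difference is in the second step: the paper obtains the continuous path directly and constructively by composing the given path $\varrho\colon[0,1]\to\mathfrak{M}(\mu)$ with the truncation $\pi_m$ (using Proposition~\ref{mucs-lemmaequiv}(1) pointwise and $\pi_m(f)=f$, $\pi_m(f')=f'$), whereas you appeal to the Corollary's homotopy equivalence to deduce injectivity on $\pi_0$; both routes are valid, and yours simply packages the same computation through the stated Corollary.
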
 

\begin{proof}
To show the first item, let $f$ and $f'$ be polynomial functions lying in the same path-connected component of the $\mu$-constant stratum of $\mathfrak{M}$. Then there is a continuous path $\varrho\colon [0,1]\to \mathfrak{M}$, $s\mapsto\varrho(s)$, such that $\varrho(0)=f$, $\varrho(1)=f'$ and $\mu_{\mathbf{0}}(\varrho(s))=\mu_{\mathbf{0}}(f)=\mu_{\mathbf{0}}(f')=:\mu$. In other words, $\varrho(s)\in W(n,\mu)\cap \mathfrak{M}=\mathfrak{M}(\mu)$. Take any $m\geq\mbox{max}\{\deg f,\deg f',\mu\}$. Then Proposition \ref{mucs-lemmaequiv} shows that $\pi_m(\varrho(s))\in W(n,m,\mu)\cap \mathfrak{M}$ for any $s\in [0,1]$, and since 
\begin{equation*}
\pi_m(\varrho(0))=\pi_m(f)=f
\quad\mbox{and}\quad
\pi_m(\varrho(1))=\pi_m(f')=f',
\end{equation*} 
we have that $s\mapsto\pi_m(\varrho(s))$ is a path in $W(n,m,\mu)\cap \mathfrak{M}$ from $f$ to $f'$. Thus, by Proposition \ref{radpcap}, there is also a path 
\begin{equation*}
\gamma\colon [0,1]\to W(n,m,\mu)\cap\mathfrak{M}
\end{equation*}
 satisfying the conditions (1) and (2) of Definition \ref{def-pcap} (i.e., $f$ and $f'$ can be joined by a piecewise complex-analytic path in $\mathfrak{M}(\mu)$).

To prove the second item, let $f$ and $f'$ be polynomial functions lying in the same path-connected component of the $\mu^*$-constant stratum of $\mathfrak{M}$. Then there is a continuous path $\varrho\colon [0,1]\to \mathfrak{M}$, $s\mapsto\varrho(s)$, such that $\varrho(0)=f$, $\varrho(1)=f'$ and the $\mu^*$-sequence of $\varrho(s)$ is given by $\mu^*\equiv\mu^*_n:=(\mu^{(n)},\ldots,\mu^{(1)})$ for any $s\in [0,1]$, where $\mu^{(i)}:=\mu_{\mathbf{0}}^{(i)}(f)=\mu_{\mathbf{0}}^{(i)}(f')$.
In other words, $\varrho(s)\in W^*(n,\mu^*)\cap\mathfrak{M}=\mathfrak{M}(\mu^*)$.  Take any $m\geq\mbox{max}\{\deg f,\deg f',\mu^{(n)},\ldots,\mu^{(1)}\}$. Then Proposition \ref{mucs-lemmaequiv-2} shows that $\pi_m(\varrho(s))\in W^*(n,m,\mu^*)\cap \mathfrak{M}$ for any $s\in [0,1]$, and since 
\begin{equation*}
\pi_m(\varrho(0))=\pi_m(f)=f
\quad\mbox{and}\quad
\pi_m(\varrho(1))=\pi_m(f')=f',
\end{equation*}
we have that $s\mapsto\pi_m(\varrho(s))$ is a path in $W^*(n,m,\mu^*)\cap \mathfrak{M}$ from $f$ to $f'$.
Thus, by the $\mu^*$ version of Proposition \ref{radpcap} (see the comment after it), there is also a path 
\begin{equation*}
\gamma\colon [0,1]\to W^*(n,m,\mu^*)\cap \mathfrak{M}
\end{equation*}
 satisfying the conditions (1) and (2) of Definition \ref{def-pcap} with $\mathfrak{M}(\mu^*)$ instead of $\mathfrak{M}(\mu)$ and $W^*(n,m,\mu^*)$ instead of $W(n,m,\mu)$ (i.e., $f$ and $f'$ can be joined by a piecewise complex-analytic path in $\mathfrak{M}(\mu^*)$).
\end{proof}

\section{Construction of $\mu^*$-Zariski pairs of surfaces}\label{sect-mszph}
In this last section, we construct examples of $\mu^*$-Zariski pairs of surfaces. The main tools we use are Theorems \ref{mt1} and \ref{mt3} and the Oka formula \eqref{okaformula11}--\eqref{okaformula13} for the zeta-function.

\subsection{Zeta-multiplicity and zeta-multiplicity factor}
Let $h(\mathbf{z})$ be a non-constant analytic function defined in a neighbourhood of the origin of $\mathbb{C}^n$ and such that $h(\mathbf{0})=0$. By the A'Campo--Oka formula \eqref{ACampoformula}, the zeta-function $\zeta_{h,\mathbf{0}}(t)$ of the monodromy associated with the Milnor fibration of $h$ at $\mathbf{0}$ can be uniquely written as
\begin{equation}\label{sectmszp-expzf}
\zeta_{h,\mathbf{0}}(t)=\prod_{i=1}^{\ell} (1-t^{d_i})^{\nu_i},
\end{equation}
where $d_1,\ldots,d_{\ell}$ are mutually disjoint and $\nu_1,\ldots,\nu_{\ell}$ are non-zero integers. Then, as in \cite{O3}, we define the \emph{zeta-multiplicity} associated with the function $h$ as the integer 
\begin{equation*}
m_\zeta(h):=\mbox{min} \{d_i\, ;\, 1\leq i\leq \ell\}.
\end{equation*}
Observe that $m_\zeta(h)\geq \mbox{mult}_{\mathbf{0}}(h)$, where $\mbox{mult}_{\mathbf{0}}(h)$ is the usual multiplicity of $h$ at $\mathbf{0}$. The factor $(1-t^{d_i})^{\nu_i}$ in \eqref{sectmszp-expzf} that corresponds to the integer $i$ for which $d_i=m_\zeta(h)$ is called the \emph{zeta-multiplicity factor} of $\zeta_{h,\mathbf{0}}(t)$.  

\subsection{Examples of $\mu^*$-Zariski pairs of surfaces}
Now, assume that the number $n$ of complex variables is $3$, and consider two reduced, convenient, homogeneous, polynomial functions $f_0(z_1,z_2,z_3)$ and $f_1(z_1,z_2,z_3)$
of degree $d$ such that the corresponding curves $C_0$ and $C_1$ in the complex projective plane~$\mathbb{P}^2$ makes a ``Zariski pair'' \textemdash\ that is, there is a homeomorphism between the pairs $(N(C_0),C_0)$ and $(N(C_1),C_1)$ for some regular neighbourhoods $N(C_0)$ and $N(C_1)$ of $C_0$ and $C_1$, respectively, but there is no homeomorphism between the pairs $(\mathbb{P}^2,C_0)$ and $(\mathbb{P}^2,C_1)$. We assume that the singularities of the curves are Newton non-degenerate in some suitable local coordinates (in particular this is always the case if we are dealing with ``simple'' singularities in the sense of Arnol'd \cite{Arnold}). We also assume that these singularities are located in $z_1z_2z_3\not=0$. 
In particular, this implies that the functions $f_0$ and $f_1$ are weakly almost Newton non-degenerate (see Definition \ref{defowanndf}), and by an argument similar to that given in Example~\ref{sect-prel-example}, we see that they are also Newton pre-non-degenerate (see Definition \ref{def-psND}). 
Still by Example \ref{sect-prel-example}, we have that for any integer $m\geq 1$, the polynomial functions
\begin{equation}\label{civa}
g_0(z_1,z_2,z_3):=f_0(z_1,z_2,z_3)+z_1^{d+m}
\quad\mbox{and}\quad
g_1(z_1,z_2,z_3):=f_1(z_1,z_2,z_3)+z_1^{d+m}
\end{equation}
are almost Newton non-degenerate, and by Theorem \ref{mt1}, we know that these functions have an isolated singularity at the origin. The proof of Theorem \ref{mt1} also shows that 
\begin{equation*}
\zeta_{g_0,\mathbf{0}}(t)=\zeta_{g_1,\mathbf{0}}(t).
\end{equation*}
 As in \cite{O2,O3}, we call such a pair $(g_0,g_1)$ a \emph{Zariski pair of surfaces} (or a \emph{Zariski pair of links}).
The main result of this section is the following theorem.

\begin{theorem}\label{mt2}
Under the above assumptions, the Zariski pair of surfaces $(g_0,g_1)$ is in fact a $\mu^*$-Zariski pair of surfaces, that is, the functions $g_0(\mathbf{z})$ and $g_1(\mathbf{z})$ have the same Teissier's $\mu^*$-sequence but they do not belong to the same path-connected component of the $\mu^*$-constant stratum of $\mathfrak{M}$.
\end{theorem}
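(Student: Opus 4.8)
The plan is to compute all three entries of the Teissier $\mu^*$-sequence of $g_0$ and $g_1$ and check that they coincide (this is the first assertion), and then to prove the second assertion by contradiction, feeding a piecewise complex-analytic path produced by Theorem \ref{mt3} into Teissier's theorem that $\mu^*$-constant families are Whitney equisingular. Write $g_i=f_i+z_1^{d+m}$ (there is no loss in taking $k=1$). Since $f_i$ is homogeneous of degree $d$ and $d<d+m$, one has $\mbox{mult}_{\mathbf{0}}(g_i)=d$, hence $\mu_{\mathbf{0}}^{(1)}(g_i)=d-1$. For a generic plane $H$ through the origin the line $\mathbb{P}(H)$ meets $C_i$ in $d$ distinct points, so $f_i|_H$ is a binary form of degree $d$ with distinct roots; as $g_i|_H=f_i|_H+(z_1|_H)^{d+m}$ has the same tangent cone as $f_i|_H$ (the extra term being of degree $d+m>d$), the germ $g_i|_H$ is an ordinary $d$-fold point, and therefore $\mu_{\mathbf{0}}^{(2)}(g_i)=\mu_{\mathbf{0}}(g_i|_H)=(d-1)^2$. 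Finally, $\mu_{\mathbf{0}}^{(3)}(g_i)=\mu_{\mathbf{0}}(g_i)$ is given by Theorem \ref{mt1} with $n=3$, $\mathbf{w}={}^t(1,1,1)$, $d_1=d_2=d_3=d$ and $w_1=1$, namely $\mu_{\mathbf{0}}(g_i)=(d-1)^3+m\,\mu^{\mbox{\tiny tot}}_i$, where $\mu^{\mbox{\tiny tot}}_i$ is the sum of the Milnor numbers of the singular points of the exceptional curve $E_i(\mathbf{w})$; by the computation in the proof of Example \ref{sect-prel-example} this curve is $C_i$ itself, so $\mu^{\mbox{\tiny tot}}_i=\sum_{\mathbf{p}\in\mbox{\tiny Sing}(C_i)}\mu_{\mathbf{p}}$. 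Since $(C_0,C_1)$ is a Zariski pair, $(N(C_0),C_0)\cong(N(C_1),C_1)$, so $C_0$ and $C_1$ carry the same collection of local singularity types and $\mu^{\mbox{\tiny tot}}_0=\mu^{\mbox{\tiny tot}}_1$. Hence $\mu^*_{\mathbf{0}}(g_0)=\mu^*_{\mathbf{0}}(g_1)=\big((d-1)^3+m\,\mu^{\mbox{\tiny tot}},(d-1)^2,d-1\big)$.

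For the second assertion, suppose for contradiction that $g_0$ and $g_1$ lie in the same path-connected component of the $\mu^*$-constant stratum $\mathfrak{M}(\mu^*)$. Theorem \ref{mt3} then provides a piecewise complex-analytic path $\gamma\colon[0,1]\to W^*(3,m',\mu^*)\cap\mathfrak{M}$ from $g_0$ to $g_1$ (for a suitable integer $m'$), with partition $0=s_0<\cdots<s_{q_0}=1$ and complex-analytic pieces $\tilde\gamma_q\colon U_q\to W^*(3,m',\mu^*)\cap\mathfrak{M}$ extending $\gamma|_{[s_q,s_{q+1}]}$. For each $q$ the function $\mathcal{G}_q(\mathbf{z},s):=\tilde\gamma_q(s)(\mathbf{z})$ is complex-analytic and defines a family $\{g_s:=\tilde\gamma_q(s)\}_{s\in U_q}$ of polynomials with an isolated singularity at $\mathbf{0}$ whose $\mu^*$-sequence at $\mathbf{0}$ is constantly $\mu^*$ (Proposition \ref{mucs-lemmaequiv-2}); since the singular point remains anchored at the origin, Teissier's equisingularity theorem (via the Whitney conditions and the Thom--Mather isotopy theorem; see \cite{Teissier2}) shows that this family is topologically trivial along $\{\mathbf{0}\}\times U_q$. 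Composing the resulting trivialisations over the segments $[s_q,s_{q+1}]$, one gets that $(\mathbb{C}^3,V(g_0),\mathbf{0})$ and $(\mathbb{C}^3,V(g_1),\mathbf{0})$ have the same embedded topological type.

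The remaining step is to deduce $(\mathbb{P}^2,C_0)\cong(\mathbb{P}^2,C_1)$, which contradicts the Zariski pair hypothesis. Along $\gamma$ the constancy of $\mu^{(1)}$ forces $\mbox{mult}_{\mathbf{0}}(g_s)\equiv d$, while the constancy of $\mu^{(2)}=(d-1)^2$ forces the degree-$d$ part $f^{(s)}$ of $g_s$ to be reduced: a generic plane section of $g_s$ is an ordinary $d$-fold point, so its tangent cone — a generic plane section of $f^{(s)}$ — is a union of $d$ distinct lines, whence $f^{(s)}$ has no multiple component. Thus $C_s:=V(f^{(s)})\subseteq\mathbb{P}^2$ is a reduced plane curve of degree $d$, depending complex-analytically on $s$ on each piece. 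Blowing up the origin in $\mathbb{C}^3\times U_q$, the strict transform of $\{g_s=0\}$ meets the exceptional $\mathbb{P}^2\times\{s\}$ exactly along $C_s$; and the Whitney equisingularity of the surface family, together with the reducedness of the tangent cones, should force $\{C_s\}$ to be an equisingular family of projective plane curves, giving $(\mathbb{P}^2,C_0)\cong(\mathbb{P}^2,C_1)$. (Equivalently, for surface germs of this ``superisolated'' shape $f+z_i^{d+m}$ the embedded topological type of $(\mathbb{C}^3,V(g_i),\mathbf{0})$ determines the pair $(\mathbb{P}^2,C_i)$, as developed in \cite{O2,O3}.) Either way the contradiction is reached.

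The hard part is this last step: transferring the (Whitney, hence topological) equisingularity of the \emph{surface} family $\{V(g_s)\}$ to an equisingular family of the \emph{plane curves} $\{C_s\}$ sitting on the exceptional divisor — equivalently, reconstructing the embedded topology of $(\mathbb{P}^2,C_i)$ from that of the surface germ. This is precisely where the special form $g_i=f_i+z_1^{d+m}$ is essential, since one single blowup of the origin exposes $C_i$ on the exceptional $\mathbb{P}^2$; making it rigorous requires controlling how $\mu^*$-constant (Whitney equisingular) families behave under that blowup. A minor but necessary point along the way is checking that Teissier's theorem applies on each analytic piece, i.e. that the singular point of $V(g_s)$ stays isolated and at $\mathbf{0}$ — which is automatic because $\gamma$ takes values in $W^*(3,m',\mu^*)\cap\mathfrak{M}$.
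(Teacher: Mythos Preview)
Your computation of the $\mu^*$-sequence is correct and matches the paper's.

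For the second assertion, the step you yourself flag as hard is a genuine gap. From Teissier you get topological triviality of the \emph{surface} family $\{V(g_s)\}$, and you want to conclude equisingularity of the projectivised tangent-cone family $\{C_s\}$ in $\mathbb{P}^2$. But Whitney equisingularity of a hypersurface family does not in general propagate to equisingularity of the tangent cones, and you give no argument for why it does here. Your alternative route --- recovering $(\mathbb{P}^2,C_i)$ from the embedded topological type of the surface germ --- is also unavailable: the intermediate $g_s$ along the path $\gamma$ are arbitrary elements of $W^*(3,m',\mu^*)\cap\mathfrak{M}$, not of the special shape $f_s+z_1^{d+m}$, so even if such a recovery principle held for that shape it would not apply along the path.

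The paper closes this gap by a zeta-function argument rather than a direct equisingularity transfer. From the piecewise analytic $\mu^*$-constant path one first deduces (via Teissier, or via L\^e--Ramanujam combined with Sebastiani--Thom) that $\zeta_{g_s,\mathbf{0}}(t)$ is independent of $s$. Blowing up the origin and applying the A'Campo--Oka formula, one then shows that the factor of $\zeta_{g_s,\mathbf{0}}(t)$ carrying the minimal exponent $d$ equals $(1-t^d)^{c+\mu^{\mathrm{tot}}(C_s)}$ with $c$ independent of $s$; hence $\mu^{\mathrm{tot}}(C_s)$ is constant along the path. L\^e's non-splitting lemma for the Milnor number then rules out any bifurcation of the singular points of $C_s$, and L\^e's equisingularity criterion for plane-curve families gives that $(\mathbb{P}^2,C_s)$ is topologically constant --- the contradiction. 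Your reducedness argument for the tangent cone $f^{(s)}$ is essentially the paper's Claim~\ref{claim53} and is needed as input to make the blowup computation go through.
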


\begin{remark}
Of course, as above, a similar result still holds true if we replace the term $z_1^{d+m}$ in \eqref{civa} either by $z_2^{d+m}$ or by $z_3^{d+m}$.
\end{remark}

\begin{proof}[Proof of Theorem \ref{mt2}]
First, we show that  $g_0$ and $g_1$ have the same $\mu^*$-sequence at $\mathbf{0}$.
By Theorem \ref{mt1}, the Milnor numbers $\mu_\mathbf{0}(g_0)$ and $\mu_\mathbf{0}(g_1)$ of $g_0$ and $g_1$ at $\mathbf{0}$ are given by the formula \eqref{edmt1}. Since $C_0$ and $C_1$ have regular neighbourhoods $N(C_0)$ and $N(C_1)$ such that $(N(C_0),C_0)\simeq(N(C_1),C_1)$, they have the same ``combinatoric'' (see \cite{Artal,Artaletal} for the definition), so that the local Milnor numbers $\mu_{\mathbf{p}}$ that appear in the formula \eqref{edmt1} are the same for both $C_0$ and $C_1$. It follows that 
\begin{equation*}
\mu_\mathbf{0}(g_0)=\mu_\mathbf{0}(g_1)=(d-1)^3+m \mu^{\mbox{\tiny tot}},
\end{equation*}
where $\mu^{\mbox{\tiny tot}}$ is the (finite) sum of the local Milnor numbers at the singular points of $C_0$ (or equivalently, at the singular points of $C_1$), that is, the sum of the $\mu_{\mathbf{p}}$'s.

Now, for a generic hyperplane $H$ through the origin $\mathbf{0}\in\mathbb{C}^3$, the restriction $f_l\vert_{H}$, $l\in\{0,1\}$, is a homogeneous polynomial of degree $d$ with an isolated singularity at the origin, so that its Milnor number at $\mathbf{0}$ is $\mu_{\mathbf{0}}(f_l\vert_{H})=(d-1)^2$. Clearly, $f_l\vert_{H}$ is Newton non-degenerate, and since the term $z_1^{d+m}$ is above the Newton boundary $\Gamma(g_l\vert_{H})=\Gamma(f_l\vert_{H})$, the function $g_l\vert_{H}$ is Newton non-degenerate too. Thus, its Milnor number at $\mathbf{0}$ is determined by $\Gamma(g_l\vert_{H})$, and hence we have 
\begin{equation*}
\mu_{\mathbf{0}}^{(2)}(g_l):=\mu_{\mathbf{0}}(g_l\vert_{H})=\mu_{\mathbf{0}}(f_l\vert_{H})=(d-1)^2 \quad\mbox{for}\quad l\in\{0,1\}.
\end{equation*}

Finally, since the multiplicities of $g_0$ and $g_1$ at $\mathbf{0}$ are equal to $d$, it follows that $g_0$ and $g_1$ have the same $\mu^*$-sequence at $\mathbf{0}$, namely for any $l\in\{0,1\}$ we have
\begin{equation*}
\mu_{\mathbf{0}}^*(g_l):=(\mu_{\mathbf{0}}(g_l),\mu_{\mathbf{0}}^{(2)}(g_l),\mbox{mult}_{\mathbf{0}}(g_l)-1)=((d-1)^3+m \mu^{\mbox{\tiny tot}},(d-1)^2,d-1).
\end{equation*}

Now, to prove that $g_0$ and $g_1$ lie in different path-connected components of the $\mu^*$-constant stratum $\mathfrak{M}(\mu^*)$ of $\mathfrak{M}$, we argue by contradiction. Suppose they belong to the same path-connected component. Then, by Theorem \ref{mt3}, there exists a piecewise complex-analytic path
\begin{equation*}
\gamma\colon [0,1]\to W^*(3,m',\mu^*)\cap\mathfrak{M}
\end{equation*} 
connecting $g_0$ and $g_1$, where $\mu^*$ denotes the triple $(\mu_{\mathbf{0}}^{(3)}(g_l),\mu_{\mathbf{0}}^{(2)}(g_l),\mu_{\mathbf{0}}^{(1)}(g_l))$ and where $m'$ is an integer\footnote{We use the letter $m'$ in $W^*(n,m',\mu^*)=W^*(3,m',\mu^*)$, the letter $m$ being already used in the present section with a different meaning.} satisfying
\begin{align*}
m'\geq\mbox{max}\{\deg g_0,\deg g_1,\mu_{\mathbf{0}}^{(3)}(g_l),\mu_{\mathbf{0}}^{(2)}(g_l),\mu_{\mathbf{0}}^{(1)}(g_l)\}.
\end{align*} 
In other words, there is a piecewise complex-analytic family $\{g_s\}_{0\leq s\leq 1}$ of functions $g_s:=\gamma(s)$ connecting $g_0$ and $g_1$ and such that the $\mu^*$-sequence of $g_s$ is independent of $s\in [0,1]$.

As a part of the $\mu^*$-constancy, the multiplicity $\mbox{mult}_{\mathbf{0}}(g_s)$ of $g_s$ at $\mathbf{0}$ is independent of $s\in [0,1]$, and hence for each $s$, the initial polynomial $\mbox{in}(g_s)$ of $g_s$ has degree $d$. Moreover this polynomial satisfies the following property.
 
\begin{claim}\label{claim53}
For each $s\in [0,1]$, the homogeneous polynomial $\mbox{\emph{in}}(g_s)$ is reduced, so that the projective curve $C_{s}\subseteq \mathbb{P}^2$ defined by $\mbox{\emph{in}}(g_s)$ has only isolated singularities.
\end{claim}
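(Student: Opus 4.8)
The plan is to exploit the constancy, along the path $\gamma$, of the second term $\mu_{\mathbf{0}}^{(2)}$ of the $\mu^*$-sequence, together with the classical lower bound for the Milnor number of a plane curve germ in terms of its multiplicity. Since $\gamma$ is a piecewise complex-analytic path in $W^*(3,m',\mu^*)\cap\mathfrak{M}$, the $\mu^*$-sequence of $g_s=\gamma(s)$ does not depend on $s$; as the endpoints satisfy $\mu_{\mathbf{0}}^{(2)}(g_l)=(d-1)^2$ by the computation made above, we get $\mu_{\mathbf{0}}^{(2)}(g_s)=(d-1)^2$ for every $s\in[0,1]$, while (as already observed) $\mbox{mult}_{\mathbf{0}}(g_s)=d$, so $\mbox{in}(g_s)$ is a nonzero homogeneous polynomial of degree $d$ for each $s$.

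Fix $s$. I would then choose a hyperplane $H\ni\mathbf{0}$ that is generic enough to realise the minimum $\mu_{\mathbf{0}}^{(2)}(g_s)=\mu_{\mathbf{0}}(g_s\vert_H)$, to have $\mbox{in}(g_s)\vert_H\neq0$, and to avoid the line $\{\ell=0\}$ for every linear form $\ell$ dividing $\mbox{in}(g_s)$ (each of these is a dense Zariski-open condition on the space of hyperplanes through $\mathbf{0}$, so such an $H$ exists). For such $H$, the germ $g_s\vert_H$ is a holomorphic function of two variables with an isolated critical point — its Milnor number being finite, equal to $(d-1)^2$ — of multiplicity $d$, whose tangent cone is $\mbox{in}(g_s)\vert_H$.

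The key input is then the classical fact that for a holomorphic germ $q$ of two variables with an isolated critical point and $\mbox{mult}_{\mathbf{0}}(q)=d$ one has $\mu_{\mathbf{0}}(q)\geq(d-1)^2$, with equality if and only if the tangent cone $\mbox{in}(q)$ is reduced (i.e.\ a product of $d$ pairwise distinct linear forms). One way to see this is via $\mu_{\mathbf{0}}(q)=I_{\mathbf{0}}(\partial_1 q,\partial_2 q)\geq\mbox{mult}_{\mathbf{0}}(\partial_1 q)\cdot\mbox{mult}_{\mathbf{0}}(\partial_2 q)\geq(d-1)^2$, the equality case being controlled, through Euler's identity $d\,\mbox{in}(q)=z_1\,\partial_1\mbox{in}(q)+z_2\,\partial_2\mbox{in}(q)$, by whether $\partial_1\mbox{in}(q)$ and $\partial_2\mbox{in}(q)$ are coprime. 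Applying this to $q=g_s\vert_H$ together with the equality $\mu_{\mathbf{0}}(g_s\vert_H)=(d-1)^2$ forces $\mbox{in}(g_s)\vert_H$ to be reduced. Now, if $\mbox{in}(g_s)$ were divisible by $\ell^2$ for some linear form $\ell$, then $\mbox{in}(g_s)\vert_H$ would be divisible by $(\ell\vert_H)^2$ with $\ell\vert_H\neq0$ by the choice of $H$ — a contradiction. Hence $\mbox{in}(g_s)$ is reduced, so $C_s=\{\mbox{in}(g_s)=0\}\subset\mathbb{P}^2$ is a reduced plane curve, and its singular locus is a proper closed, hence finite, subset; that is, $C_s$ has only isolated singularities.

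The part needing the most care is the genericity bookkeeping in the second paragraph — making sure a single hyperplane can simultaneously compute $\mu_{\mathbf{0}}^{(2)}(g_s)$ and expose a hypothetical repeated linear factor of $\mbox{in}(g_s)$ — and the verification of the equality clause in the classical inequality, including the degenerate cases (such as $d=1$ or tangent cones that are powers of a single variable). Neither is deep, but this is where the argument must be written out carefully; the remaining ingredients (constancy of $\mu^*$ along $\gamma$, and the finiteness of the singular locus of a reduced plane curve) are immediate from results already established in the paper and from standard facts.
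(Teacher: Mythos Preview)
Your proof is correct and follows the same overall strategy as the paper's: both argue by contradiction, restrict to a generic $2$-plane $H$, observe that a non-reduced $\mbox{in}(g_s)$ forces a non-reduced tangent cone for $g_s\vert_H$, and then derive $\mu_{\mathbf{0}}(g_s\vert_H)>(d-1)^2$, contradicting the constancy of $\mu^{(2)}$. The difference lies in the tool used for this last inequality. The paper changes coordinates so that the repeated linear factor becomes $x$, notes that the lattice point $(1,d-1)$ then lies strictly above $\Gamma(g_{s_0}\vert_H)$, and invokes Kouchnirenko's inequality $\mu_{\mathbf{0}}\geq\nu(\Gamma_{\!-})$ together with an explicit Newton-number computation. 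You instead use the intersection-number identity $\mu_{\mathbf{0}}(q)=I_{\mathbf{0}}(\partial_1 q,\partial_2 q)$ and the bound $I_{\mathbf{0}}\geq \mbox{mult}_{\mathbf{0}}(\partial_1 q)\cdot\mbox{mult}_{\mathbf{0}}(\partial_2 q)$, with equality only when the tangent cones of the partials are coprime; a repeated factor $\ell$ of $\mbox{in}(q)$ divides both $\partial_i\mbox{in}(q)$, which kills the equality. Your route is more elementary (no appeal to \cite{K}) and makes the equality case transparent, while the paper's Newton-polygon argument is consonant with the toric machinery used throughout. The genericity bookkeeping you flag is indeed the only point requiring care, and your treatment of it (finitely many Zariski-open conditions on the hyperplane) is adequate.
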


\begin{proof}
We argue by contradiction. Suppose there exists $s_0\in [0,1]$ such that $\mbox{in}(g_{s_0})$ is not reduced (i.e., $C_{s_0}$ has non-isolated singularities). Then, for a generic linear plane $H$ of $\mathbb{C}^3$, there are coordinates $(x,y)$ for $H$ and linear forms $\ell_1(x,y),\ldots,\ell_q(x,y)$ such that 
\begin{equation*}
\mbox{in}(g_{s_0})\vert_H(x,y)=\ell_1(x,y)^{p_1}\cdots\ell_q(x,y)^{p_q}
\end{equation*}
with $p_1\geq \cdots\geq p_q$ and $p_1\geq 2$. By a linear change of coordinates, we may assume that $\ell_1(x,y)\equiv x$, so that 
\begin{equation*}
\mbox{in}(g_{s_0})\vert_H(x,y)=x^{p_1}h(x,y),
\end{equation*}
where $h$ is a homogeneous polynomial of degree $d-p_1$ (in particular, $\mbox{in}(g_{s_0})\vert_H$ is not convenient with respect to the coordinates $(x,y)$).
By adding monomials of the form $x^{\alpha}$ and $y^{\beta}$ for $\alpha,\, \beta$ large enough, we may also assume that $g_{s_0}\vert_H$ is convenient.
Now since the integral point $(1,d-1)$ is not on the Newton boundary $\Gamma(\mbox{in}(g_{s_0})\vert_H)$ of $\mbox{in}(g_{s_0})\vert_H$ with respect to the coordinates $(x,y)$, it follows\footnote{Let us briefly show it, for instance, in the special case where the Newton boundaries are as in Figure \ref{figure2}, the general case being completely similar. Clearly, in this case,
\begin{equation*}
\nu(\Gamma_{\!-}(g_{s_0}\vert_H))=2S'-(d+c)-(d+e)+1,
\end{equation*}
where $S'=S+c\, q/2+e\, p/2$ with $p\geq p_1\geq 2$ and $S$ is the area of the triangle $(0,d,d)$. Similarly, $\nu(\Gamma_{\!-}(g_{0}\vert_H))=2S-2d+1$. Since $p\geq 2$, it follows that
\begin{equation*}
\nu(\Gamma_{\!-}(g_{s_0}\vert_H))-\nu(\Gamma_{\!-}(g_{0}\vert_H))=c(q-1)+e(p-1)>0
\end{equation*}
 (note that if $q=0$, then $c=0$, and the above inequality still holds true).} that 
\begin{equation*}
\nu(\Gamma_{\!-}(g_{s_0}\vert_H))>\nu(\Gamma_{\!-}(g_{0}\vert_H))
\end{equation*}
(see Figure \ref{figure2}).
\begin{figure}[t]
\includegraphics[scale=2]{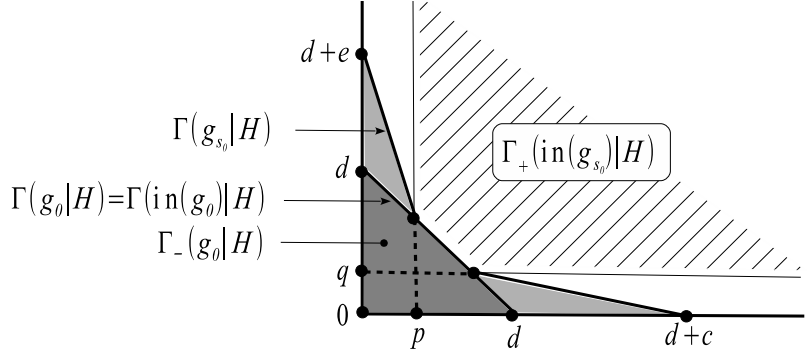}
\caption{Newton boundaries and cones over them}
\label{figure2}
\end{figure}
Here, $\nu(\cdot)$ denotes the Newton number (see \cite{K} for the definition) and $\Gamma_{\!-}(g_{s_0}\vert_H)$ stands for the cone over $\Gamma(g_{s_0}\vert_H)$ with the origin as vertex. (Again, $\Gamma(g_{s_0}\vert_H)$ denotes the Newton boundary of $g_{s_0}\vert_H$ with respect to the coordinates $(x,y)$.) The polyhedron $\Gamma_{\!-}(g_{0}\vert_H)$ is defined similarly.
Since $\mu_{\mathbf{0}}(g_{s_0}\vert_H)\geq \nu(\Gamma_{\!-}(g_{s_0}\vert_H))$ (see \cite[Th\'eor\`eme~1.10]{K}), altogether we have
\begin{equation*}
\mu^{(2)}_{\mathbf{0}}(g_{s_0})=\mu_{\mathbf{0}}(g_{s_0}\vert_H)\geq \nu(\Gamma_{\!-}(g_{s_0}\vert_H))>\nu(\Gamma_{\!-}(g_{0}\vert_H))=(d-1)^2=\mu^{(2)}_{\mathbf{0}}(g_{0}),
\end{equation*}
which is a contradiction to the $\mu^*$-constancy.
\end{proof}

\begin{claim}\label{zmcc}
The zeta-function $\zeta_{g_s,\mathbf{0}}(t)$ of the monodromy associated with the Milnor fibration of $g_s$ at $\mathbf{0}$ is independent of $s\in [0,1]$. In particular, the zeta-multiplicity $m_{\zeta}(g_s)$ associated with the function $g_s$ and the zeta-multiplicity factor of $\zeta_{g_s,\mathbf{0}}(t)$ are both independent of $s\in [0,1]$.
\end{claim}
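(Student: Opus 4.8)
The plan is to reduce the statement to a single complex-analytic $\mu$-constant family and then invoke the classical fact that the monodromy zeta-function is constant along such a family. By Definition~\ref{def-pcap}, the path $\gamma$ restricts on each subinterval $[s_q,s_{q+1}]$ to a complex-analytic map $\tilde\gamma_q\colon U_q\to W^*(3,m',\mu^*)\cap\mathfrak{M}$, and $[0,1]$ is covered by finitely many such subintervals; since $[0,1]$ is connected, it therefore suffices to prove that $s\mapsto\zeta_{g_s,\mathbf{0}}(t)$ is locally constant, i.e. to treat the case of a complex-analytic family $\{g_s\}_{s\in D}$ parametrised by a small disc $D\subseteq\mathbb{C}$ contained in some $U_q$. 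As $g_s=\gamma(s)\in W^*(3,m',\mu^*)\cap\mathfrak{M}$ and $m'\ge\mu^{\mbox{\tiny max}}$, Proposition~\ref{mucs-lemmaequiv-2} shows that $g_s\in\mathfrak{M}(\mu^*)$, so each $g_s$ has an isolated singularity at $\mathbf{0}$ with $g_s(\mathbf{0})=0$ and with $\mu^*$-sequence equal to $\mu^*$; in particular the Milnor number $\mu:=\mu^{(3)}$ and the multiplicity $d=\mu^{(1)}+1$ of $g_s$ at $\mathbf{0}$ do not depend on $s$. Thus $\{g_s\}_{s\in D}$ is a $\mu$-constant family of isolated hypersurface singularities.

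Next I would establish constancy of the zeta-function along this $\mu$-constant family. Consider $F\colon\mathbb{C}^3\times D\to\mathbb{C}\times D$, $(\mathbf{z},s)\mapsto(g_s(\mathbf{z}),s)$. Because for each $s$ the only critical point of the germ $g_s$ in a fixed small ball $B_\varepsilon$ is $\mathbf{0}$, with critical value $0$, and because $\mu$ is constant, upper semicontinuity of the total Milnor number inside $B_\varepsilon$ forbids any critical point to escape from $\mathbf{0}$; hence, after shrinking $D$ if necessary, one can choose $\varepsilon>0$ and $0<\delta\ll\varepsilon$ serving simultaneously as Milnor data for all $s\in D$. With such a uniform choice the Milnor tube fibrations of the $g_s$ assemble into a locally trivial fibration
\[
\mathcal{E}:=\{(\mathbf{z},s)\ :\ \|\mathbf{z}\|\le\varepsilon,\ 0<|g_s(\mathbf{z})|\le\delta,\ s\in D\}\ \longrightarrow\ D_\delta^*\times D
\]
(the control near the sphere $\|\mathbf{z}\|=\varepsilon$ and near the corner being carried out exactly as in Milnor's original construction, uniformly in $s$, and, unlike ambient topological triviality, this requires no restriction on the number of variables). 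Restricting to $D_\delta^*\times\{s\}$ then shows that the Milnor fibrations of all the $g_s$ are fibrewise isomorphic and their geometric monodromies conjugate, so $\zeta_{g_s,\mathbf{0}}(t)$ is independent of $s\in D$. Equivalently, one may simply invoke the classical fact that a $\mu$-constant deformation of isolated hypersurface singularities carries a constant monodromy zeta-function (this is part of Lê's analysis of $\mu$-constant families).

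Finally I would conclude: combining the local constancy over the finitely many analytic pieces (linked through chains of small discs) gives that $s\mapsto\zeta_{g_s,\mathbf{0}}(t)$ is constant on $[0,1]$, which is the first assertion. For the \emph{in particular}, write $\zeta_{g_s,\mathbf{0}}(t)$ in the normal form \eqref{sectmszp-expzf}; since that expression (the $d_i$'s mutually distinct, the $\nu_i$'s non-zero) is unique, the integers $d_i$ and $\nu_i$ are themselves independent of $s$, and therefore so are $m_\zeta(g_s)=\min_i d_i$ and the zeta-multiplicity factor of $\zeta_{g_s,\mathbf{0}}(t)$. The one genuinely non-formal input is the constancy of the zeta-function in a $\mu$-constant family; I expect the delicate point there to be the uniform-in-$s$ control of the Milnor tube near $\|\mathbf{z}\|=\varepsilon$ that turns $\mathcal{E}\to D_\delta^*\times D$ into an honest locally trivial fibration — but this portion of the $\mu$-constant machinery is available for $n=3$ as well, the well-known obstruction for $n=3$ arising only when one attempts to upgrade fibration-triviality to ambient topological triviality, which is not needed here.
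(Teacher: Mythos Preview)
Your proof is correct and runs along the first half of the L\^e--Ramanujam argument \cite{LR}, which shows --- for \emph{all} $n$, including $n=3$ --- that in a $\mu$-constant analytic family the Milnor fibrations are fiber-homotopy equivalent with conjugate monodromies, so that the characteristic polynomial of the monodromy (hence the zeta-function) is constant. The paper instead invokes two black boxes: Teissier's theorem \cite{Teissier} that $\mu^*$-constancy along a piecewise complex-analytic path forces the link pairs $(S_\varepsilon^{2n-1},K_{g_s})$ to be diffeomorphic, or alternatively \cite[Lemma~12]{O3}, which deduces zeta-function constancy from $\mu$-constancy by first passing to $n=4$ via the Sebastiani--Thom/Sakamoto join \cite{ST,S} and then applying L\^e--Ramanujam there. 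Your hands-on route uses only $\mu$-constancy and is more self-contained; the paper's first alternative exploits the full $\mu^*$-hypothesis but delivers the stronger conclusion of link constancy.

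One point deserves more care than you give it. The radius $\varepsilon$, chosen good for $g_{s_0}$, need not be a Milnor radius for nearby $g_s$ (transversality of $V(g_s)$ with \emph{all} smaller spheres $S_{\varepsilon'}$, $\varepsilon'\le\varepsilon$, may fail), so identifying the fiber $g_s^{-1}(t)\cap B_\varepsilon$ of your fibration $\mathcal{E}$ with the genuine Milnor fiber of $g_s$ is not ``exactly as in Milnor's original construction''. It requires the additional step --- carried out in \cite{LR} and valid in every dimension --- that the inclusion of the small Milnor fiber of $g_s$ into $g_s^{-1}(t)\cap B_\varepsilon$ is a homotopy equivalence compatible with the monodromies; this uses precisely that $g_s$ has no critical points in the shell $B_\varepsilon\setminus B_{\varepsilon_s}$, which is exactly what your non-splitting observation supplies.
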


\begin{proof}
This claim follows, for instance, from the following result of Teissier \cite{Teissier}: if $h$ and $h'$ are two analytic functions such that $h(\mathbf{0})=h'(\mathbf{0})=0$ and $h$ and $h'$ can be connected by a $\mu^*$-constant piecewise complex-analytic path, then for any sufficiently small $\varepsilon>0$, the pairs 
\begin{equation*}
(S^{2n-1}_\varepsilon,K_{h})
\quad\mbox{and}\quad
(S^{2n-1}_\varepsilon,K_{h'})
\end{equation*}
are diffeomorphic; here, $S^{2n-1}_\varepsilon$ stands for the sphere in $\mathbb{C}^n$ with centre $\mathbf{0}$ and radius $\varepsilon$, and $K_{h}$ denotes the link of $V(h):=h^{-1}(0)$, that is, $K_{h}:=S_\varepsilon^{2n-1}\cap V(h)$ for $\varepsilon>0$ small enough (of course, $K_{h'}$ is defined similarly). 

Alternatively, Claim \ref{zmcc} can also be deduced from \cite[Lemma~12]{O3} which asserts that the independence with respect to $s$ of the Milnor number $\mu_{\mathbf{0}}(g_s)$ implies the independence of the zeta-function $\zeta_{g_s,\mathbf{0}}(t)$. This latter lemma follows from the L\^e--Ramanujam theorem \cite{LR} together with the Sebastiani--Thom join theorem \cite{ST} and its generalization by Sakamoto \cite{S} (for details, see \cite{O3}).
\end{proof}

Now, the expression for the zeta-function $\zeta_{g_0,\mathbf{0}}(t)$ given by the formulas \eqref{okaformula12} and \eqref{okaformula13} shows that $m_{\zeta}(g_0)\leq d$, but as observed above, we also have $m_{\zeta}(g_0)\geq \mbox{mult}_{\mathbf{0}}(g_0)=d$, so that altogether we get $m_{\zeta}(g_0)=d$. Thus, by Claim \ref{zmcc}, for any  $s\in [0,1]$ we have $m_{\zeta}(g_s)=d$. 
Moreover the zeta-multiplicity factor of $\zeta_{g_s,\mathbf{0}}(t)$ satisfies the following property.

\begin{claim}\label{zmcc2}
For any $s\in [0,1]$, the zeta-multiplicity factor of $\zeta_{g_s,\mathbf{0}}(t)$ is given by
\begin{equation*}
(1-t^d)^{c+\mu^{\mbox{\tiny \emph{tot}}}(C_{s})}
\end{equation*}
 where $c$ is a constant independent of $s\in [0,1]$ and $\mu^{\mbox{\tiny \emph{tot}}}(C_{s})$ is the total Milnor number of $C_s$ (i.e., the sum of all local Milnor numbers associated with the singularities of $C_s$). In particular, since the zeta-multiplicity factor of $\zeta_{g_s,\mathbf{0}}(t)$ is also independent of $s$ (see Claim~\ref{zmcc}), we have that $\mu^{\mbox{\tiny \emph{tot}}}(C_{s})$ is independent of $s$ as well.
\end{claim}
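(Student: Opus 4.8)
The plan is to extract the zeta-multiplicity factor of $\zeta_{g_s,\mathbf{0}}(t)$ from the A'Campo--Oka formula \eqref{ACampoformula} applied to an explicit good resolution of $g_s$. Let $\hat\pi\colon X\to\mathbb{C}^3$ be the blowing-up of the origin, with exceptional divisor $\hat E\cong\mathbb{P}^2$. Since $\mbox{mult}_{\mathbf 0}(g_s)=d$ is part of the $\mu^*$-constancy, the multiplicity of $\hat\pi^*g_s$ along $\hat E$ is exactly $d$, and $\hat E$ meets the strict transform $\widetilde V_s$ of $V(g_s)$ in the projectivized tangent cone, i.e.\ in the curve $C_s=\{\mbox{in}(g_s)=0\}$; concretely, in a chart $(\mathbb{C}^3_\sigma,\mathbf y_\sigma)$ one has $\hat\pi^*g_s=y_{\sigma,1}^d\,\tilde g_{s,\sigma}$ with $\tilde g_{s,\sigma}(0,y_{\sigma,2},y_{\sigma,3})=\mbox{in}(g_s)(1,y_{\sigma,2},y_{\sigma,3})$. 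Using that $g_s$ has an isolated singularity at $\mathbf 0$ (so $V(g_s)\setminus\{\mathbf 0\}$ is smooth) and that $C_s$ is reduced with isolated singularities (Claim~\ref{claim53}), one checks that $\widetilde V_s$ is smooth and transverse to $\hat E$ along the smooth locus of $C_s$, so that $\hat\pi^{-1}(V(g_s))$ has normal crossings outside the finite set $\mbox{Sing}(C_s)\subset\hat E$. I would then choose $\hat\omega_s\colon Y_s\to X$ to be an embedded resolution of $\hat\pi^{-1}(V(g_s))$ over the points of $\mbox{Sing}(C_s)$, obtained by blowing up points and smooth curves (so that all its exceptional divisors are compact), and set $\hat\Pi_s:=\hat\pi\circ\hat\omega_s$, a good resolution of $g_s$.

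The first key step is that the strict transform of $\hat E$ is the \emph{only} exceptional divisor of $\hat\Pi_s$ along which $\hat\Pi_s^*g_s$ has multiplicity $d$. Indeed, every other exceptional divisor $E_j$ lies over some $\mathbf p\in\mbox{Sing}(C_s)\subset C_s$, and there $\hat\pi^*g_s=x_1^d\,u$ in local coordinates, where $x_1$ is a local equation of $\hat E$ and $u(\mathbf p)=0$; thus both $x_1$ and $u$ lie in the maximal ideal $\mathfrak m_{\mathbf p}$, so $\hat\omega_s^*x_1$ and $\hat\omega_s^*u$ each vanish to order $\geq 1$ along $E_j$, and hence $\mbox{ord}_{E_j}(\hat\Pi_s^*g_s)=d\cdot\mbox{ord}_{E_j}(\hat\omega_s^*x_1)+\mbox{ord}_{E_j}(\hat\omega_s^*u)\geq d+1>d$ (compare Remark~\ref{remark-multetmi}). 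Since $m_\zeta(g_s)=d$ (established just above), the A'Campo--Oka formula \eqref{ACampoformula} therefore shows that the zeta-multiplicity factor of $\zeta_{g_s,\mathbf 0}(t)$ is $(1-t^d)^{-\chi(E'_{\hat E})}$, where $E'_{\hat E}$ is the stratum of $\hat E$ occurring in \eqref{ACampoformula}. The second key step is the identification $E'_{\hat E}\cong\mathbb{P}^2\setminus C_s$: the strict transform of $\hat E$ under $\hat\omega_s$ is $\mathbb{P}^2$ with the induced embedded resolution of $C_s$ performed, and removing from it the strict transform of $C_s$ together with all exceptional curves of $\hat\omega_s$ lying on it recovers $\mathbb{P}^2\setminus C_s$; moreover $\hat E\subseteq\hat\Pi_s^{-1}(\mathbf 0)$, so intersecting with $\hat\Pi_s^{-1}(\mathbf 0)$ changes nothing. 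Hence $\chi(E'_{\hat E})=\chi(\mathbb{P}^2)-\chi(C_s)=3-\chi(C_s)$.

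It then remains to express $\chi(C_s)$ in terms of $\mu^{\mbox{\tiny tot}}(C_s)$. Since $C_s\subset\mathbb{P}^2$ is a reduced plane curve of degree $d$ with isolated singularities, a standard computation — the normalization exact sequence together with Milnor's formula $\mu_{\mathbf p}(C_s)=2\delta_{\mathbf p}-r_{\mathbf p}+1$ (with $\delta_{\mathbf p}$ the delta-invariant and $r_{\mathbf p}$ the number of branches) — gives
\begin{equation*}
\chi(C_s)=2-(d-1)(d-2)+\mu^{\mbox{\tiny tot}}(C_s).
\end{equation*}
Consequently the zeta-multiplicity factor of $\zeta_{g_s,\mathbf 0}(t)$ equals $(1-t^d)^{\chi(C_s)-3}=(1-t^d)^{c+\mu^{\mbox{\tiny tot}}(C_s)}$ with $c:=-1-(d-1)(d-2)$, which depends only on $d$ and not on $s$. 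Finally, by Claim~\ref{zmcc} the zeta-multiplicity factor is independent of $s$, so its exponent $c+\mu^{\mbox{\tiny tot}}(C_s)$, and therefore $\mu^{\mbox{\tiny tot}}(C_s)$ itself, is independent of $s\in[0,1]$.

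The step I expect to be the main obstacle is the second paragraph: making the multiplicity estimate $\mbox{ord}_{E_j}(\hat\Pi_s^*g_s)>d$ uniform over all exceptional divisors lying over $\mbox{Sing}(C_s)$ fully rigorous, and carefully checking that in the chosen resolution $\hat\Pi_s$ the stratum $E'_{\hat E}$ is exactly $\mathbb{P}^2\setminus C_s$ — i.e.\ that $\hat\omega_s$ contributes to $\hat E$ only the exceptional curves of an embedded resolution of $C_s$, and that $\widetilde V_s$ meets the strict transform of $\hat E$ in nothing beyond the strict transform of $C_s$. Once these are in place, the remaining topological computation of $\chi(C_s)$ is routine.
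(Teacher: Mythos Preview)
Your proof is correct and follows the same route as the paper: blow up the origin, use Remark~\ref{remark-multetmi} to see that every further exceptional divisor of the second resolution $\hat\omega_s$ carries multiplicity strictly greater than $d$, identify the relevant A'Campo stratum for $\hat E$ with $\mathbb P^2\setminus C_s$ (the paper phrases this as the diffeomorphism $\hat E_Y(\mathbf w)\setminus(\widetilde V_Y\cup\bigcup D_j)\cong \hat E(\mathbf w)\setminus\widetilde V$, valid because the centres lie in $C_s$), and read off the zeta-multiplicity factor as $(1-t^d)^{-\chi(\mathbb P^2\setminus C_s)}$. The only difference is in the last step: the paper extracts $\mu^{\mathrm{tot}}(C_s)$ from $\chi(\mathbb P^2\setminus C_s)$ by deforming $C_s$ to a smooth degree-$d$ curve, whereas you invoke the closed formula $\chi(C_s)=2-(d-1)(d-2)+\mu^{\mathrm{tot}}(C_s)$ directly; both yield the same constant $c=-d^2+3d-3$.
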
 

\begin{proof}
By a linear change of coordinates, we may assume that the initial polynomial $\mbox{in}(g_s)$ is convenient and Newton non-degenerate on each coordinate subspace of dimension $2$ (i.e., $\mbox{in}(g_s)\vert_{\mathbb{C}^I}$ is Newton non-degenerate for any $I\subseteq \{1,2,3\}$ with $|I|=2$). We may also assume that the singular points of $\mbox{in}(g_s)$ are not located on the coordinate axes. Consider the regular simplicial cone subdivision $\Sigma^*$ of $W^+_{\mathbb{R}}$ whose vertices are the canonical weight vectors $\mathbf{e}_1,\mathbf{e}_2,\mathbf{e}_3$ together with the weight vector $\mathbf{w}:={}^t(1,1,1)$, and look at the associated toric modification $\hat{\pi}\colon X\to \mathbb{C}^3$, which is nothing but the ordinary point blowing-up with centre at the origin. The multiplicity of $\hat g_s:=\hat\pi^*g_s$ along the compact exceptional divisor $\hat E(\mathbf{w})\simeq\mathbb{P}^2$ is the degree $d$ of $\mbox{in}(g_s)$. If the face function $(g_s)_{\Delta(\mathbf{w};g_s)}$ is not Newton non-degenerate, then the strict transform $\widetilde{V}$ of $V(g_s)$ by $\hat\pi$ may have singularities in $\hat E(\mathbf{w})$. By Claim \ref{claim53}, $E(\mathbf{w})=\widetilde{V}\cap\hat E(\mathbf{w})$ (which is given by $\mbox{in}(g_s)$) has only a finite number of isolated singularities $a_1(s),\ldots,a_{j_s}(s)$. To get a good resolution of $g_s$, we need further blowing-ups over these singular points. For each $1\leq j\leq j_s$, let $\hat\omega_j\colon Y_j\to X$ be the resolution with centre $a_j(s)$ which resolves $\hat g_s$ at $a_j(s)$. Denote by $\hat\omega\colon Y\to X$ the canonical gluing of the union (over all $1\leq j\leq j_s$) of these resolutions. 
Then
\begin{equation*}
\hat\Pi:=\hat\pi\circ\hat\omega\colon Y \xrightarrow{\ \hat\omega \ } X \xrightarrow{\ \hat\pi \ } \mathbb{C}^3
\end{equation*} 
gives a good resolution of $g_s$. The exceptional divisors of $\hat\Pi$ are the exceptional divisors $D_1,\ldots,D_r$ of $\hat\omega$ and the pull back $\hat E_Y(\mathbf{w})$ of $\hat E(\mathbf{w})$ by $\hat\omega$. Now we observe that if $m_i$ denotes the multiplicity of $\hat\Pi^*g_s$ along $D_i$ ($1\leq i\leq r$), then, by Remark \ref{remark-multetmi}, $m_i$ is greater than the multiplicity $d$ of $\hat\pi^* g_s$ along $\hat E(\mathbf{w})$.  We also notice that since $\hat E (\mathbf{w})\setminus \widetilde{V}$ is non-singular and does not contain any centre $a_j(s)$ ($1\leq j \leq j_s$), there is a canonical diffeomorphism 
\begin{align*}
\hat\omega\colon \hat E_Y (\mathbf{w})\Bigg\backslash \Bigg(\widetilde{V}_{Y}\cup \bigcup_{1\leq j\leq r} D_j\Bigg)\overset{\sim}{\longrightarrow}\hat E (\mathbf{w})\setminus \widetilde{V},
\end{align*}
where $\widetilde{V}_{Y}$ denotes the strict transform of $\widetilde{V}$ by $\hat\omega$. 
Altogether, this implies that the zeta-multiplicity factor of $\zeta_{g_s,\mathbf{0}}(t)$ is given by
\begin{equation}\label{aoa2}
(1-t^d)^{-\chi(\hat E (\mathbf{w})\setminus \widetilde{V})}.
\end{equation}
To show that $-\chi(\hat E (\mathbf{w})\setminus \widetilde{V})=c+\mu^{\mbox{\tiny tot}}(C_{s})$, where $c$ is a constant independent of $s$, we consider an analytic deformation $\{(g_s)_u\}_{|u|<1}$ of $g_s$ obtained from a small perturbation of the coefficients of the face function $(g_s)_{\Delta(\mathbf{w};g_s)}$ such that $(g_s)_u$ is Newton non-degenerate for all $u\not=0$ (as in \S \ref{subsect-OF}). Then, for such a non-zero $u$, we have
\begin{equation*}
\chi(\hat E(\mathbf{w})\cap\widetilde{V})=
\chi(\hat E(\mathbf{w})\cap\widetilde{V((g_s)_u)})+\sum_{\mathbf{p}\in \mbox{\tiny Sing}(E(\mathbf{w}))} \mu_{\mathbf{p}}=\chi(\hat E(\mathbf{w})\cap\widetilde{V((g_s)_u)})+\mu^{\mbox{\tiny tot}}(C_s),
\end{equation*}
and hence,
\begin{equation*}
\chi(\hat E(\mathbf{w})\setminus\widetilde{V})=
\chi(\hat E(\mathbf{w})\setminus\widetilde{V((g_s)_u)})-\mu^{\mbox{\tiny tot}}(C_s),
\end{equation*}
where $\widetilde{V((g_s)_u)}$ is the strict transform of $V((g_s)_u):=(g_s)_u^{-1}(0)$.
Of course, 
\begin{equation*}
\chi(\hat E(\mathbf{w})\setminus\widetilde{V((g_s)_u)})
\end{equation*}
 is independent of $u\not=0$, but the key observation is that $\chi(\hat E(\mathbf{w})\setminus\widetilde{V((g_s)_u)})$ is also independent of $s\in [0,1]$ (remind that $\mbox{in}(g_s)$ is convenient). This shows that the zeta-multiplicity factor of $\zeta_{g_s,\mathbf{0}}(t)$ is written as
\begin{equation*}
(1-t^d)^{c+\mu^{\mbox{\tiny tot}}(C_{s})},
\end{equation*}
where $c:=-\chi(\hat E(\mathbf{w})\setminus\widetilde{V((g_s)_u)})$ is independent of $s\in [0,1]$ as desired.
\end{proof}

\begin{remark}
Actually, the integer $c$ of Claim \ref{zmcc2} is given by $c=d^2+3d-3$. Indeed, by Claim \ref{zmcc}, we know that the zeta-multiplicity factor of $\zeta_{g_s,\mathbf{0}}(t)$ is independent of $s$, so in order to prove the equality $c=d^2+3d-3$, it suffices to show that the zeta-multiplicity factor of $\zeta_{g_0,\mathbf{0}}(t)$ is given by 
\begin{equation*}
(1-t^d)^{d^2+3d-3+\mu^{\mbox{\tiny tot}}(C_{0})}.
\end{equation*}
 By \eqref{okaformula11}--\eqref{okaformula13} and Remark \ref{remark-multetmi}, we easily see that the zeta-multiplicity factor of $\zeta_{g_0,\mathbf{0}}(t)$ is written as
\begin{equation*}
\zeta_{(g_0)_u}(t)\cdot (1-t^d)^{\mu^{\mbox{\tiny tot}}(C_0)}, 
\end{equation*}
where $\{(g_0)_u\}_{|u|<1}$ is a deformation of $g_0$ as above.
But in our case the zeta-function $\zeta_{(g_0)_u}(t)$ is nothing but $\zeta_{f_0,\mathbf{0}}(t)$, which can be calculated using the Varchenko formula \eqref{varchenkoformula} as
\begin{equation*}
\zeta_{(g_0)_u}(t)=\zeta_{f_0,\mathbf{0}}(t)=(1-t^d)^{d^2+3d-3}.
\end{equation*}
\end{remark}

We can now complete the proof of Theorem \ref{mt2}.
For each $s\in [0,1]$, let again $a_1(s),\ldots,a_{j_s}(s)$ denote the singular points of $C_{s}$ and $\mu^{\mbox{\tiny tot}}(C_s)$ denote the total Milnor number of $C_s$, that is, the sum of the local Milnor numbers $\mu(C_s,a_j(s)):=\mu_{a_j(s)}(\mbox{in}(g_s))$ of the singularities $(C_s,a_{j}(s))$ for $1\leq j\leq j_s$.
Observe that if there is a bifurcation of the singularities at $s=s_0$ (for some $s_0\in [0,1]$) in a small ball $B_j$ centred at a singular point $a_j(s_0)$ of $C_{s_0}$ \textemdash\ that is, $a_j(s_0)$ is the only singular point of $C_{s_0}$ in $B_j$ and it is either a ``newly born'' singularity or a singularity obtained as a ``merging'' of several singularities of $C_s$ for $s$ near, but not equal to, $s_0$ (see Figure \ref{figure3}) \textemdash\ then, by \cite[Th\'eor\`eme B]{Le} (see also \cite{B,Lazzeri}), we have
\begin{equation*}\label{plemma-esmn}
\sum_{k=1}^{k_{j}(s)} \mu(C_s,a_{j,k}(s)) < \mu(C_{s_0},a_j(s_0)),
\end{equation*}
where $a_{j,1}(s),\ldots,a_{j,k_{j}(s)}(s)$ are the singular points of the curve $C_s$ in the ball ${B}_j$.
Let
\begin{equation*}
Z:=\{(a,s)\in\mathbb{P}^2\times [0,1]\mid a \mbox{ is a singular point of } C_s\},
\end{equation*}
and let $\mbox{pr}_1\colon Z\to \mathbb{P}^2$ and $\mbox{pr}_2\colon Z\to [0,1]$ be the projections on the first and second factor respectively. Note that $\mbox{pr}_1(Z)=\bigcup_{s\in [0,1]}\{a_1(s),\ldots,a_{j_s}(s)\}$. By the above observation, if there is $s_0\in [0,1]$ such that $C_{s_0}$ gets either newly born singularities or several singularities of $C_s$ (for $s\not= s_0$ near $s_0$) merge into one (i.e., $s_0$ is a point where $\mbox{pr}_2$ fails to be a covering, see Figure \ref{figure3}), then we have $\mu^{\mbox{\tiny tot}}(C_{s})<\mu^{\mbox{\tiny tot}}(C_{s_0})$. However this contradicts Claim \ref{zmcc2}. Thus there is no such an $s_0$, and hence, by \cite{Le2}, the topological type of the pair $(\mathbb{P}^2,C_s)$ is independent of $s\in [0,1]$, so that $(C_0,C_1)$ is not a Zariski pair \textemdash\ a contradiction. 
\begin{figure}[t]
\includegraphics[scale=2]{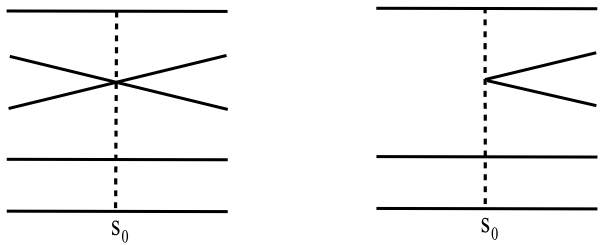}
\caption{Bifurcation of singularities}
\label{figure3}
\end{figure}

This completes the proof of Theorem \ref{mt2}.
\end{proof}

\begin{remark}
Theorem \ref{mt2} remains valid if we assume that the pair of projective curves $(C_0,C_1)$ is only a \emph{weak} Zariski pair instead of a Zariski pair. We recall that $(C_0,C_1)$ is said to be a \emph{weak Zariski pair} if there is a bijection $\phi\colon \mbox{Sing}(C_0)\to\mbox{Sing}(C_1)$ between the singular loci $\mbox{Sing}(C_0)$ and $\mbox{Sing}(C_1)$ of $C_0$ and $C_1$, respectively, such that for any $p\in \mbox{Sing}(C_0)$ the singularities $(C_0,p)$ and $(C_1,\phi(p))$ have the same embedded topological type (i.e., there are neighbourhoods $U_p$ and $U_{\phi(p)}$ of $p$ and $\phi(p)$, respectively, together with a homeomorphism of triples $\phi_p\colon (U_p,C_0\cap U_p,p)\to (U_{\phi(p)},C_1\cap U_{\phi(p)},\phi(p))$) while for any regular neighbourhoods $N(C_0)$ and $N(C_1)$ of $C_0$ and $C_1$, respectively, the pairs $(N(C_0),C_0)$ and $(N(C_1),C_1)$ are not homeomorphic (in particular,
the pairs $(\mathbb{P}^2,C_0)$ and $(\mathbb{P}^2,C_1)$ are not homeomorphic either). 
\end{remark}

\bibliographystyle{amsplain}

\begin{thebibliography}{10}

\bibitem{A} N. A'Campo, \textit{La fonction z\^eta d'une monodromie}, Comment. Math. Helv. \textbf{50} (1975), 233--248.

\bibitem{Arnold} V. I. Arnol'd, \textit{Normal forms of functions near degenerate critical points, the Weyl groups $A_k$, $D_k$, $E_k$ and Lagrangian singularities}, Funkcional. Anal. i Prilo\v{z}en. \textbf{6} (1972), no. 4, 3--25.

\bibitem{Artal} E. Artal Bartolo, \textit{Sur les couples de Zariski}, J. Algebraic Geom. \textbf{3} (1994), 223--247.

\bibitem{Artaletal} E. Artal Bartolo, J. I. Cogolludo, H. Tokunaga, \textit{A survey on Zariski pairs}, in: Algebraic geometry in East Asia--Hanoi 2005, pp. 1--100, Adv. Stud. Pure Math. \textbf{50}, Math. Soc. Japan, Tokyo, 2008.

\bibitem{Coste} M. Coste, \textit{Ensembles semi-alg\'ebriques}, in: Real algebraic geometry and quadratic forms (Rennes, 1981), pp. 109--138, Lecture Notes in Math. \textbf{959}, Springer, Berlin--New York, 1982.

\bibitem{B}  C. Ha\c s Bey, \textit{Sur l'irr\'eductibilit\'e de la monodromie locale; application \`a l'\'equisingularit\'e}, C. R. Acad. Sci. Paris S\'er. A--B \textbf{275} (1972), A105--A107. 

\bibitem{K} A. G. Kouchnirenko, \textit{Poly\`edres de Newton et nombres de Milnor}, Invent. Math. \textbf{32} (1976), 1--31.

\bibitem{Lazzeri} F. Lazzeri, \textit{A theorem on the monodromy of isolated singularities},  in: Singularit\'es \`a Carg\`ese (Rencontre Singularit\'es G\'eom. Anal., Inst. \'Etudes Sci. de Carg\`ese, 1972), pp. 269--275, Ast\'erisque \textbf{7} \& \textbf{8}, Soc. Math. France, Paris, 1973.

\bibitem{Le2} L\^e D\~ung Tr\'ang, \textit{Sur un crit\`ere d'équisingularit\'e}, C. R. Acad. Sci. Paris S\'er. A-B \textbf{272} (1971), A138--A140.

\bibitem{Le} L\^e D\~ung Tr\'ang, \textit{Une application d'un th\'eor\`eme d'A'Campo \`a l'\'equisingularit\'e}, Nederl. Akad. Wetensch. Proc. Ser. A \textbf{76} = Indag. Math. \textbf{35} (1973), 403--409.

\bibitem{LR} L\^e D\~ung Tr\'ang and C. P. Ramanujam, \textit{The invariance of Milnor's number implies the invariance of the topological type}, Amer. J. Math. \textbf{98} (1976), no. 1, 67--78.

\bibitem{MO} J. Milnor and P. Orlik, \textit{Isolated singularities defined by weighted homogeneous polynomials}, Topology \textbf{9} (1970), 385--393.

\bibitem{O1} M. Oka, \textit{Non-degenerate complete intersection singularity}, Hermann, Paris, 1997.

\bibitem{O2} M. Oka, \textit{Almost non-degenerate functions and a Zariski pair of links}, arXiv:2105.03549, 2021.

\bibitem{O3} M. Oka, \textit{On $\mu$-Zariski pairs of links}, arXiv:2203.10684, 2022.

\bibitem{S} K. Sakamoto, \textit{Milnor fiberings and their characteristic maps}, Manifolds--Tokyo 1973 (Proc. Internat. Conf., Tokyo, 1973), pp. 145--150. Univ. Tokyo Press, Tokyo, 1975.

\bibitem{ST} M. Sebastiani and R. Thom, \textit{Un r\'esultat sur la monodromie}, Invent. Math. \textbf{13} (1971), 90--96.

\bibitem{Teissier} B. Teissier,  \textit{The hunting of invariants in the geometry of discriminants}, in: Real and complex singularities (Proc. Ninth Nordic Summer School/NAVF Sympos. Math., Oslo, 1976), pp. 565--678, Sijthoff and Noordhoff, Alphen aan den Rijn, 1977.

\bibitem{Teissier2} B. Teissier, \textit{Cycles \'evanescents, sections planes et conditions de Whitney}, in: Singularit\'es \`a Carg\`ese (Rencontre Singularit\'es G\'eom. Anal., Inst. \'Etudes Sci., Carg\`ese, 1972), pp. 285--362, Ast\'erisque \textbf{7} \& \textbf{8}, Soc. Math. France, Paris, 1973.

\bibitem{V} A. N. Varchenko, \textit{Zeta-function of monodromy and Newton's diagram}, Invent. Math. \textbf{37} (1976), no.~3, 253--262.

\bibitem{Z} O. Zariski, \textit{On the problem of existence of algebraic functions of two variables possessing a given branched curve}, Amer. J. Math. \textbf{51} (1929), no. 2, 305--328.

\bibitem{ZS} O. Zariski and P. Samuel, \textit{Commutative algebra}, Vol. II, Reprint of the 1960 edition, Graduate Texts in Mathematics \textbf{29}, Springer-Verlag, New York--Heidelberg, 1975.

\end{thebibliography}

\end{document}